\definecolor{Red}{rgb}{1.00, 0.00, 0.00}
\definecolor{DarkGreen}{rgb}{0.00, 1.00, 0.00}
\definecolor{Blue}{rgb}{0.00, 0.00, 1.00}
\definecolor{Cyan}{rgb}{0.00, 1.00, 1.00}
\definecolor{Magenta}{rgb}{1.00, 0.00, 1.00}
\definecolor{DeepSkyBlue}{rgb}{0.00, 0.75, 1.00}
\definecolor{DarkGreen}{rgb}{0.00, 0.39, 0.00}
\definecolor{SpringGreen}{rgb}{0.00, 1.00, 0.50}
\definecolor{DarkOrange}{rgb}{1.00, 0.55, 0.00}
\definecolor{OrangeRed}{rgb}{1.00, 0.27, 0.00}
\definecolor{DeepPink}{rgb}{1.00, 0.08, 0.57}
\definecolor{DarkViolet}{rgb}{0.58, 0.00, 0.82}
\definecolor{SaddleBrown}{rgb}{0.54, 0.27, 0.07}
\definecolor{Black}{rgb}{0.00, 0.00, 0.00}
\definecolor{dark-magenta}{rgb}{.5,0,.5}
\definecolor{myblack}{rgb}{0,0,0}
\definecolor{darkgray}{gray}{0.5}
\definecolor{lightgray}{gray}{0.75}
\newcommand{\zz}{\mathbb{Z}}
\newcommand{\sss}{\mathbb{S}}
\newcommand{\bb}[1]{\mathbb{#1}}
\newcommand{\mc}[1]{\mathcal{#1}}
\newcommand{\fff}{\mathbb{F}}
\newcommand{\simto}{\overset{\sim}{\longrightarrow}}
\DeclareMathOperator{\Hom}{Hom}
\DeclareMathOperator{\THH}{THH}
\DeclareMathOperator{\tmf}{tmf}
\DeclareMathOperator{\LMod}{LMod}
\DeclareMathOperator{\Fil}{Fil}
\DeclareMathOperator{\cofib}{cofib}
\DeclareMathOperator{\ksc}{ksc}
\DeclareMathOperator{\ko}{ko}
\newcommand{\mo}[1]{\operatorname{#1}}
\newcommand{\f}[2]{\frac{#1}{#2}}
\newcommand{\bdot}{\text{\textbullet}}
\newtheorem*{short hand}{theorem name}
\def\makeautorefname#1#2{\expandafter\def\csname#1autorefname\endcsname{#2}}
\def\equationautorefname~#1\null{(#1)\null}
\theoremstyle{plain}  
\newtheorem{theorem}{Theorem}[section]
\newtheorem{proposition}{Proposition}[section]
\newtheorem{lemma}{Lemma}[section]
\newtheorem{corollary}{Corollary}[section]
\theoremstyle{definition}
\newtheorem{definition}{Definition}[section]
\newtheorem{example}{Example}[section]
\newtheorem{remark}{Remark}[section]
\let\c@corollary=\c@theorem
\let\c@proposition=\c@theorem
\let\c@lemma=\c@theorem
\let\c@definition=\c@theorem
\let\c@example=\c@theorem
\let\c@remark=\c@theorem
\numberwithin{equation}{section}
\g@addto@macro\bfseries{\boldmath}
\let\theoldbibliography\thebibliography
\renewcommand{\thebibliography}[1]{%
	\theoldbibliography{#1}%
	\setlength{\parskip}{0ex}
	\setlength{\itemsep}{0.5ex plus 0.2ex minus 0.2ex}
	\small
}
\apptocmd{\thebibliography}{\raggedright}{}{}
\renewcommand{\title}[1]{\newcommand{\thetitle}{#1}}
\renewcommand{\author}[1]{\newcommand{\theauthor}{#1}}
\renewcommand{\maketitle}{%
	\begin{center}
		{\linespread{1.15}%
			\bfseries\MakeTextUppercase%
			\thetitle\par} \vspace{4.0ex}
		\footnotesize
		{\MakeUppercase \theauthor}
	\end{center}
	\vspace{3.0ex}
	\thispagestyle{fancy}
}
\renewenvironment{abstract}{\noindent\begin{center}\begin{minipage}{0.8\linewidth}\small{\scshape Abstract.}}{\end{minipage}\end{center}}
\newlength{\tagsep}
\def\fullwidthdisplay{\displayindent\z@ \displaywidth\columnwidth}
\edef\@tempa{\noexpand\fullwidthdisplay\the\everydisplay}
\everydisplay\expandafter{\@tempa}
\titleformat{\section}{\centering}{\textsection\thesection.}{1.5\tagsep}{\scshape}
\titleformat{\subsection}[runin]{}{\fontseries{b}\selectfont\textsection\bfseries\thesubsection.}{1.5\tagsep}{\bfseries}[.]
\titlespacing*{\section}{0pt}{4ex}{\medskipamount}
\titlespacing*{\subsection}{0pt}{\bigskipamount}{0.5em}
\title{
	Whitehead filtrations for computations in Topological Hochschild Homology
}
\author{ Logan Hyslop }
\date{mm/dd/yyyy}
\begin{document}
		\DeclareDocumentCommand\rart{ g }{%
	{\ar[r, tail]%
		\IfNoValueF {#1} { \ar[r, tail, "#1"]}%
	}%
}
\DeclareDocumentCommand\dart{ g }{%
	{\ar[d, tail]%
		\IfNoValueF {#1} { \ar[d, tail, "#1"]}%
	}%
}
\DeclareDocumentCommand\rarh{ g }{%
	{\ar[r, two heads]%
		\IfNoValueF {#1} { \ar[r, two heads, "#1"]}%
	}%
}
\DeclareDocumentCommand\darh{ g }{%
	{\ar[d, two heads]%
		\IfNoValueF {#1} { \ar[d, two heads, "#1"]}%
	}%
}

\maketitle

	

	%
	%
	%
	%
	%
	%
	%
	\begin{abstract}
	We discuss spectral sequences coming from Whitehead filtrations in the computation of topological Hochschild homology of ring spectra.  Using cyclic invariance, this makes for simple computations of $\THH$ of connective ring spectra $R$ with coefficients in discrete ring spectra.\footnote{Where discrete means Eilenberg-MacLane.}  In particular, we show how to use this to compute $\THH(\tmf,\fff_2)$, and $\THH(\tmf,\zz_{(2)})$, where $\tmf$ denotes the $\bb{E}_\infty$ ring spectrum of topological modular forms.  Then, we obtain a description of $\THH(\ell/v_1^n)$ in terms of $\THH(\ell,\ell/v_1^n)$, where the latter can be computed by results of \cite{angeltveit2009topological}.  We next explain how the methods of this computation generalize to give us information about $\THH(\mo{cofib}(x^k:\Sigma^{k|x|}R\to R))$ for $R$ and $\mo{cofib}(x^k)$ suitably structured connective ring spectra, $k>1$, and $x\in \pi_{*}(R)$ an arbitrary element in positive degree.  Finally, we examine the general framework to describe the topological Hochschild homology of 2-local connective self-conjugate K-theory, $\ksc_2$.
	\end{abstract}
{\small
	\setcounter{tocdepth}{1}
	\tableofcontents
	\vspace{3.0ex}
}
	\section{Introduction}
		\setcounter{section}{1}

\indent\indent Topological Hochschild homology was introduced by B\"{o}kstedt in 1985 as a generalization of ordinary Hochschild homology to general ring spectra, which has lead to many recent advances in algebraic K-theory \cite{nikolaus2018topological}, which in turn, lead to the recent disproof of Ravenel's telescope conjecture \cite{burklund2023ktheoretic}.  In this paper, we continue the program, and provide new computations of topological Hochschild homology for quotients of the Adams summand $\ell$, and of connective self-conjugate $K$-theory $\ksc_2$ at the prime 2.  We also compute the topological Hochschild homology of the connective $\bb{E}_{\infty}$-ring $\tmf$ of topological modular forms \cite{goerss2009topologicalmodularformsaftern} with coefficients in $\fff_2$ and $\zz_{(2)}$, which were previously computed in an unpublished note of Rognes \cite{Rognes14tmf}.  The methods used in principal extend easily to provide computations of $\THH(R,\fff_p)$ whenever $R$ is a (suitably structured) ring spectrum such that $\pi_*(\fff_p\otimes_{R}\fff_p)$ is a quotient of the dual Steenrod algebra, as we will demonstrate in a few select cases in \textsection 3.

The main tools of this paper will be various spectral sequences arising from Whitehead filtrations, used as a means to compute topological Hochschild homology (possibly with coefficients) of connective ring spectra.  In tandem with these filtrations, we will heavily utilize the fact that topological Hochschild homology enjoys a cyclic invariance property, called the Dennis–Waldhausen Morita argument in \cite[~Proposition 6.2]{Blumberg_2012}.  The cyclic invariance property states that if we have a morphism $f:R\to S$ of $\bb{E}_1$-ring spectra, then we have an equivalence $$\THH(R,S)\simeq \THH(S,S\otimes_{R}S).$$  Applying the Whitehead filtration together with this property recovers the Brun spectral sequence, and we get nice comparison maps for computing $\THH$ with coefficients.
	
	For an $\bb{E}_1$-ring spectrum $R$, the topological Hochschild homology of $R$ is defined as $$\THH(R)=R\otimes_{R\otimes_{\sss}R^{op}}R,$$ and the topological Hochschild homology with coefficients in a $R$-bimodule $M$ is similarly given by $$\THH(R,M)=R\otimes_{R\otimes_{\sss}R^{op}}M.$$  Assuming that $R$ is connective, we can apply the Whitehead filtration to $R$ to get a filtered left $R\otimes_{\sss}R^{op}$-module spectrum $\tau_{\geq *}R$.  Applying the functor $$R\otimes_{R\otimes_{\sss}R^{op}}-:\Fil(\LMod_{R\otimes_{\sss}R^{op}})\to\Fil( \LMod_{R})$$ allows us to turn this into a filtered left $R$-module spectrum.  This gives rise to a spectral sequence with signature
	 $$E_1^{s,t}=\THH_{-s}(R,\pi_{t}(R))\implies \THH_{t-s}(R),$$ which we will refer to as the Whitehead spectral sequence.  There is a similar Atiyah-Hirzebruch style spectral sequence computing $\THH(R,M)$ whenever $R$ is a connective $\bb{E}_1$-ring spectrum, and $M$ an $R$-bimodule.  These spectral sequences were considered in the case when $R$ is an $\bb{E}_{\infty}$ ring by H\"{o}nig \cite{H2020}.  One of our main contributions is the extension to the $\bb{E}_n$-algebra case for $n<\infty$.  In section 2, we discuss some basic results on spectral sequences of this type, which will then be applied in the remainder of the paper.
	 
	 In section 3, we show how to use the tools we develop in order to compute $\THH(\tmf,\fff_2)$.  With this in hand, by combining the Brun spectral sequence with a Bockstein spectral sequence, and a careful analysis of some comparisons to the sphere, we are able to compute $\THH(\tmf,\zz_{(2)})$.  The main theorem of section 3 is 
	 \begin{theorem}\label{th1.1}  We have
	 	$$\THH_*(\tmf,\zz_{(2)})=\begin{cases}
	 		\zz_{(2)} & \qquad \text{if } *=0,9,13,22\\
	 		\zz/2^k\zz & \qquad \text{if } *=2r^{k+3}-1, 2^{k+3}r-1+9, 2^{k+3}r-1+13, 2^{k+3}r-1+22,\\
	 		0 & \qquad \text{otherwise},
	 	\end{cases}$$
	 	for all $k>0$ and $r$ odd.
	 \end{theorem}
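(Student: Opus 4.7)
The plan is to bootstrap from the computation of $\THH_*(\tmf,\fff_2)$ carried out earlier in Section~3 up to $\zz_{(2)}$-coefficients via the mod-$2$ Bockstein spectral sequence associated to the short exact sequence $0\to \zz_{(2)}\xrightarrow{2}\zz_{(2)}\to \fff_2\to 0$. This yields a spectral sequence with $E_1$-page $\THH_*(\tmf,\fff_2)$ whose higher differentials are the Bocksteins $\beta_r$ detecting the $2$-adic valuations of torsion classes, converging to $\THH_*(\tmf,\zz_{(2)})$ modulo infinitely $2$-divisible classes; the latter are shown to vanish from finiteness of the predicted torsion in each fixed degree.

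To identify the Bocksteins, I would apply cyclic invariance to rewrite
$$\THH(\tmf,\zz_{(2)})\simeq \THH(\zz_{(2)},\zz_{(2)}\otimes_\tmf \zz_{(2)}),$$
and then run the Brun spectral sequence of Section~2 on the right hand side. This reduces the computation to B\"okstedt's classical $\THH_*(\zz_{(2)})$ with coefficients in the bimodule $\pi_*(\zz_{(2)}\otimes_\tmf \zz_{(2)})$. Using the $\bb{E}_\infty$-structure on $\tmf$ and the known ring $\pi_*\tmf_{(2)}$, a bar-construction style analysis identifies the torsion-free summand of $\pi_*(\zz_{(2)}\otimes_\tmf \zz_{(2)})$ and places its permanent generators in degrees $0, 9, 13, 22$ --- exactly the four free generators predicted by the theorem.

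The main obstacle will be pinning down the higher Bocksteins and ruling out hidden $2$-extensions. The unit $\sss\to\tmf$ furnishes a map $\zz_{(2)}=\THH(\sss,\zz_{(2)})\to\THH(\tmf,\zz_{(2)})$, which locates the generator in degree $0$ and transports B\"okstedt's torsion pattern $\zz/2^{v_2(n)}$ in degree $2n-1$ of $\THH_*(\zz_{(2)})$ forward. Careful comparison with further ring maps out of $\tmf$ (to $H\fff_2$ and to connective variants of $K$-theory) is required to detect the three remaining free generators in degrees $9, 13, 22$, and to track the exponent shift from B\"okstedt's $v_2(n)+1$ to the observed $v_2(n)+3$ that produces the stated pattern $\zz/2^k$ in degree $2^{k+3}r-1$.

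Once the differentials and this shift are fixed, extracting the theorem is a bookkeeping exercise: tensoring the exterior-algebra free part in degrees $\{0, 9, 13, 22\}$ with the appropriately shifted B\"okstedt torsion recovers exactly the stated description of $\THH_*(\tmf,\zz_{(2)})$.
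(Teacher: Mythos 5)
Your scaffold matches the paper's: a Bockstein spectral sequence off $\THH_*(\tmf,\fff_2)$ determines the answer up to the torsion exponents, cyclic invariance converts to the Brun spectral sequence for $\THH(\zz_{(2)},\zz_{(2)}\otimes_\tmf\zz_{(2)})$, and the unit $\sss\to\tmf$ gives the needed comparison map. But the claim that the exponent shift is then ``a bookkeeping exercise'' hides the actual content of the proof. Determining that the torsion in degree $2^{k+3}r-1$ is $\zz/2^k\zz$ rather than some other $\zz/2^{n_k}\zz$ is precisely the hard part, and the paper spends several pages on it: first the full ring structure (not just the torsion-free part) of $\pi_*(\zz_{(2)}\otimes_\tmf\zz_{(2)})$ is worked out via two interlocking Bockstein spectral sequences (Proposition~\ref{prop3.9}); then the integral BAHSS for the sphere is analyzed page by page against its mod-$2$ reduction (Theorem~\ref{th3.13}); and only after pushing all of that through the comparison to $\tmf$'s BAHSS does one extract the crucial differential $d_6(2\beta_{2k+2})=\alpha^{2k}y$, which gives the upper bound $n_k\leq k$. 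The lower bound comes from the rational computation (Lemma~\ref{lem3.10}), which forces the differential sequence in the Bockstein spectral sequence to be strictly increasing. Your proposal contains no mechanism that would produce either bound, so the shift by $3$ rather than $2$ or $4$ is simply asserted.

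Two further points. The comparison maps you suggest to ``connective variants of $K$-theory'' play no role and would not help; the comparisons the argument actually needs are to $\mo{BAHSS}_{\zz_{(2)}}^{\sss}$ (unit) and to the mod-$2$ BAHSS (coefficient reduction), run simultaneously. And the ``bar-construction style analysis'' of $\pi_*(\zz_{(2)}\otimes_\tmf\zz_{(2)})$ is not enough: you need its multiplicative structure, including the nilpotent relations among the torsion classes $x,y,u,v,w$, because those relations (e.g.\ $x^4=0$, $ux=0$) are exactly what kills the ``wrong'' differentials and isolates the class $\alpha^{2k}y$ as the one that must be hit by $2\beta_{2k+2}$. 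Without that, the degree counting that closes the argument cannot get started.
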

 Section 4 combines the Atiyah-Hirzebruch spectral sequences with a May-type spectral sequence, as constructed in \cite{angeltveit2015algebraic} (see also \cite{AngeliniKnoll2018},\cite{keenan2020filtration}) , in order to compute the topological Hochschild homology of quotients of $\ell$, $\THH(\ell/v_1^n)$.  Here, $\ell$ denotes the mod $p$ Adams summand for some fixed odd prime $p$, and $v_1\in \pi_{2(p-1)}(\ell)$ generates $\pi_*(\ell)$ as a polynomial algebra over $\zz_{(p)}$.  As we use numerous times throughout, $\ell/v_1^n\simeq\tau_{\leq 2n(p-1)-1}\ell$ inherits an $\bb{E}_{\infty}-\ell$-algebra structure by \cite[~Proposition 7.1.3.15]{HA}.
 \begin{theorem}\label{th1.2}
 	Suppose that $R$ is a connective $\bb{E}_{m}$-ring spectrum for some $m\geq 4$, and $x\in \pi_{*}(R)$ is a positive degree class such that, for some fixed $k>1$, there is an $\bb{E}_3$-$R$-algebra $S$, such that the unique algebra map $R\to S$ fits into a fiber sequence $$\Sigma^{k|x|}R\xrightarrow{x^k}R\to S.$$   Then, there is an equivalence of $\pi_0(R)$-modules $$\THH(S,\pi_0(S))\simeq \THH(R,\pi_0(R))\otimes_{\pi_0(R)}(\pi_0(R)\otimes_{\pi_0(R) \otimes_{R} S}\pi_0(R)).$$
 \end{theorem}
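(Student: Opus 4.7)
\emph{Proof plan.} My approach is to apply the cyclic invariance of $\THH$ (Dennis--Waldhausen--Morita) twice and compute the intermediate $\pi_0(R)$-bimodule by base change. The structural hypotheses on $R$ and on $S$ are exactly what is needed to ensure that the pushout $B := \pi_0(R) \otimes_R S$ carries an $\bb{E}_2$-algebra structure together with an augmentation $B \to \pi_0(R)$ refining the unit $\pi_0(R) \to B$; this is what makes the base-change manipulations below well-defined as maps of $\pi_0(R)$-bimodules.

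First, I would apply cyclic invariance to the Postnikov truncation $S \to \pi_0(S) = \pi_0(R)$ (writing $A := \pi_0(R)$ for brevity) to obtain $\THH(S, \pi_0(S)) \simeq \THH(A, A \otimes_S A)$. I would then identify the coefficient $A$-bimodule $A \otimes_S A$ via base change: from the standard identity $A \otimes_R A \simeq (A \otimes_R S) \otimes_S A = B \otimes_S A$ (valid since $B$ is the base change of the $R$-algebra $S$ to $A$), tensoring with $A$ on the left over $B$ along the augmentation $B \to A$ gives $A \otimes_S A \simeq A \otimes_B (A \otimes_R A)$. Using that $B$ is commutative up to the $\bb{E}_2$-structure, I would swap the tensor orders to get
$$A \otimes_S A \;\simeq\; (A \otimes_R A) \otimes_B A \;\simeq\; (A \otimes_R A) \otimes_A (A \otimes_B A),$$
where the last step uses that the right $B$-action on $A \otimes_R A$ factors through the augmentation $B \to A$.

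Finally, I would apply cyclic invariance a second time to $R \to A$ to get $\THH(R, \pi_0(R)) \simeq \THH(A, A \otimes_R A)$, and invoke the trace identity $\THH(A, X \otimes_A D) \simeq \THH(A, X) \otimes_A D$ for an $A$-bimodule $X$ and a symmetric $A$-bimodule $D$. I would apply this with $X = A \otimes_R A$ and $D = A \otimes_B A$; the latter is symmetric because $B$ is a commutative $\bb{E}_2$-$A$-algebra, so $A \otimes_B A$ is a commutative $A$-algebra. Chaining the equivalences yields
$$\THH(S, \pi_0(S)) \;\simeq\; \THH(R, \pi_0(R)) \otimes_{\pi_0(R)} \bigl(\pi_0(R) \otimes_{\pi_0(R) \otimes_R S} \pi_0(R)\bigr),$$
as $\pi_0(R)$-modules. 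The main obstacle will be carefully justifying the tensor-product rearrangement in the middle step --- tracking the various left/right $B$-module and $A$-bimodule structures to verify that the swap $A \otimes_B (A \otimes_R A) \simeq (A \otimes_R A) \otimes_B A$ and the subsequent reassociation genuinely hold as equivalences of $\pi_0(R)$-bimodules given only the $\bb{E}_3$-algebra structure on $S$ over $R$; once this identification is in place, the remaining steps are routine applications of cyclic invariance and the trace property of $\THH$.
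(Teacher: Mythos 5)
Your overall strategy is the same as the paper's: apply cyclic invariance to reduce to $\THH(\pi_0(R),\pi_0(R)\otimes_S\pi_0(R))$, identify the coefficient bimodule by base change as $(\pi_0(R)\otimes_R\pi_0(R))\otimes_{\pi_0(R)}(\pi_0(R)\otimes_B\pi_0(R))$ where $B=\pi_0(R)\otimes_R S$, and then pull the second (free) factor out of $\THH$. Your use of the trace identity $\THH(A,X\otimes_A D)\simeq\THH(A,X)\otimes_A D$ for the last step is a clean alternative to the paper's argument (which instead runs the multiplicative BAHSS to compute $\pi_*$ and then builds the module map by hand from a splitting); both are fine.

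However, there is a real gap, and it is not the one you flag. You assert that the right $B$-action on $A\otimes_R A$ factors through the augmentation $B\to A$, and you present verifying this as a matter of ``tracking left/right module structures.'' In fact this factorization is exactly where the hypothesis $k>1$ enters, and it is false without it. The map in question is $B=A\otimes_R S\to A\otimes_R\pi_0(S)=A\otimes_R A$ induced by the truncation $S\to\pi_0(S)$, and a priori it hits the nonzero class ``$\sigma(x^k)$'' in $\pi_{k|x|+1}(A\otimes_R A)$, so there is no reason for it to factor through $\pi_0(B)=A$. The paper's proof handles this by observing that, because $S\simeq R/x^{k}$, the $R$-module map $S\to\pi_0(S)$ factors through $R/x^{k-1}$; applying $A\otimes_R-$, the intermediate object $A\otimes_R R/x^{k-1}$ is concentrated in degrees $0$ and $(k-1)|x|+1$, while $A\otimes_R S$ is concentrated in degrees $0$ and $k|x|+1$, and since $k>1$ implies $k|x|+1>(k-1)|x|+1$, the composite factors over $\tau_{\leq(k-1)|x|+1}(A\otimes_R S)\simeq A$. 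This is the content of ``$B$ acts through $A$.'' For $k=1$ the statement of the theorem genuinely fails (e.g.\ $R=\ell$, $x=v_1$, $S=\zz_{(p)}$: the claimed formula would give $\zz_{(p)}$-summands in $\THH_*(\zz_{(p)})$ in positive degree, but $\THH_{>0}(\zz_{(p)})$ is all torsion). So your proposal needs this truncation argument inserted at the swap step; once it is, the rest of your route goes through.
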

\noindent Under the same hypotheses, this result allows us to construct a spectral sequence with signature $$E_1^{*,*}=\THH_{-*}(R,\pi_*(S))\otimes_{\pi_0(R)}(\bigoplus_{i\geq 0}\pi_0(R)\cdot a_i)\implies \THH_{*}(S),$$ where $a_i$ is a class in bidegree $(-i(2k|x|+2),0)$.  There is a map to this spectral sequence from one with signature $$\THH_{-*}(R,\pi_*(S))\implies \THH_{*}(R,S).$$  Thus, in cases where we have a Leibniz rule on our spectral sequence, the computation reduces to understanding topological Hochschild homology of $R$ with coefficients in a quotient, as well as how differentials act on the classes $a_i$.

As a final sample application, we compute the topological Hochschild homology of 2-local self-conjugate $K$-theory $\ksc_2$ in section \textsection 6.

\noindent\textbf{Notation/Conventions}
\begin{itemize}
	\item We will use the term ``category'' to mean $\infty$-category in the sense of Lurie \cite{lurie2008higher}. 
	\item We write $p$ to denote a fixed odd prime.
	\item The notation $\ell$ will denote the mod $p$ Adams summand, and $\tmf$ will denote the spectrum of topological modular forms.
	\item The divided power algebra on a class $x$ over a ring $R$ will be denoted $\Gamma_{R}[x]$, and is defined by generators $x^{(n)}$ for $n\in\zz_{> 0}$ with relations $x^{(n)}x^{(m)}=\f{(n+m)!}{n!m!}x^{(n+m)}$.
	\item Classical Hochschild homology over a base ring $R$ will be denoted by $\mo{HH}(-/R)$.
\end{itemize}
\noindent\textbf{Acknowledgments.}  I am grateful to Alicia Lima for mentoring me during this project, Peter May for hosting the REU at which this work was done, and Mike Hill for useful conversations including his suggestion to examine $\THH_*(\ksc_2)$.  I am especially grateful to Ishan Levy for recommending that I do the computations that eventually grew into this paper, and for several helpful conversations.  I would also like to thank Jeremy Hahn, Mike Hill, Peter May, and anonymous referees for providing several useful comments on previous versions of the draft.  I would further like to thank Mike Hopkins for pointing out Remark \ref{rem3.2} to me.
		\newpage
\section{Filtered Objects and Spectral Sequences}
\indent We review the theory of filtered objects developed in \cite[~\textsection 1.2.2]{HA}, \cite[~\textsection Appendix B]{burklund2022galois}, simultaneously setting our conventions.  Let $\mc{C}$ be a stable category.  To $\mc{C}$, one can associate the category $\mo{Fil}(\mc{C}):=\mo{Fun}(\zz_{\leq }^{op},\mc{C})$ of filtered objects in $\mc{C}$, i.e. the category of $\zz$-indexed collections of objects $c_i\in\mc{C}$ together with maps $c_i\to c_{i-1}$.  The category of filtered objects in $\mc{C}$ is equipped with an auto-equivalence $(1):\mc{C}\to \mc{C}$, with $c(1)_{n}=c_{n-1}$, and a natural transformation $\tau:(-1)\implies id$ given in degree $n$ by the map $c_{n+1}\to c_n$ induced by the filtration.  Taking the cofiber of $\tau$ induces a functor from $\mo{Fil}(\mc{C})$ to the category of graded objects $\mo{Gr}(\mc{C}):=\mo{Fun}(\zz^{ds},\mc{C})$ of functors from the discrete category $\zz^{ds}$ to $\mc{C}$.  The object $\cofib(\tau:c_{\bdot}(-1)\to c_{\bdot})$ will be denoted $c_{\bdot}^{gr}$, and termed the associated graded object of $c_{\bdot}$.

To any $c\in \mc{C}$ and $i\in\zz$, we associate an object $c^{i}\in \mo{Fil}(\mc{C})$ such that $$c^i_{n}=\begin{cases}
	0 & \qquad \qquad \text{if } i>n,\\
	c & \qquad \qquad \text{otherwise},
\end{cases}$$ with transition maps being 0 when the source is zero and the identity map otherwise.  Suppose that there is an object $1\in\mc{C}$ which defines a notion of ($\zz$-graded) homotopy groups via $$\pi_n(c):=\pi_0(\Hom_{\mc{C}}(\Sigma^{n}1,c)).$$  Then one can define bigraded homotopy groups for filtered objects of $\mc{C}$ via the formula $$\pi_{n,m}(c):=\pi_0(\Hom_{\mo{Fil}(\mc{C})}((\Sigma^n1)^m,c)).$$
The fiber sequence $c(-1)\to c \to c^{gr}$ yields an exact couple, giving rise to a spectral sequence with signature $$E_1^{s,t}:=\pi_{t-s,t}(c^{gr})\implies \pi_{t-s}(\varinjlim c_{\bdot}),$$ which converges to the homotopy groups of the underlying object ``$\varinjlim c_{\bdot}$'' so long as $\varprojlim c_{\bdot}$ vanishes (which in the cases we consider, it always will).  With this grading convention, the $d_r$-differential has bigrading $(r+1,r)$.

If $\mc{C}$ had a symmetric monoidal structure, then so too does $\mo{Fil}(\mc{C})$, via Day convolution \cite[~\textsection2.6]{HA}.  The functor taking the underlying object $\mo{Fil}(\mc{C})\to \mc{C}$ is symmetric monoidal, and with the Day convolution structure on $\mo{Gr}(\mc{C})$, so to is the functor taking the associated graded \cite[~\textsection Appendix B]{burklund2022galois}.

We will use specific $t$-structures on the filtered derived category, generalizing constructions of Beilinson, and investigated in \cite{burklund2022galois} (in this particular form, the definition is a special case of \cite[~Definition 2.4]{lee2023topological}).  
\begin{definition}\label{definition2.1}
	Suppose that $\mc{C}$ is the category of (left) modules over some connective $\bb{E}_1$-ring spectrum (e.g. $\mc{C}=\mo{Sp}$), and denote by $\tau$ the canonical $t$-structure on $\mc{C}$ defined as in \cite[~Proposition 7.1.1.13]{HA}.  Given any $r\in\bb{Q}_{\geq 0}$, there is a $t$-structure $\tau^r$ on $\mo{Fil}(\mc{C})$ (resp. $\mo{Gr}(\mc{C})$) uniquely determined by stipulating that $x_{\bdot}\in\mo{Fil}(\mc{C})$ (resp. $x_{\bdot}\in\mo{Gr}(\mc{C})$) is connective if and only if $x_i$ is an $\lceil ri\rceil$-connective object of $\mc{C}$ for all $i\in\zz$.  In either case, the connective cover $\tau^{r}_{\geq 0}(x_{\bdot})$ has $i$th component $\tau_{\geq \lceil ri\rceil }x_i$.
\end{definition}
By \cite[~Lemma 2.6]{lee2023topological}, if $\mc{C}$ is the category of modules over an $\bb{E}_{\infty}$-ring, then these $t$-structures defined above are compatible with the symmetric monoidal structure.  
\begin{remark}\label{rem2.2}
In particular, the functor $\mo{Sp}\to\mo{Fil}(\mo{Sp})$ is lax symmetric monoidal.  Indeed, it is the composite of the lax symmetric monoidal connective cover functor $\tau_{\geq 0}^{1}$ with the functor taking $c$ to the constant filtered object $\ldots \to c\to c\to c\to\ldots$.  This latter functor is lax symmetric monoidal as it is right adjoint to the symmetric monoidal functor taking a filtered spectrum to the underlying spectrum.
\end{remark}

We now introduce the main spectral sequences that we will be using in this paper. Consider a connective $\bb{E}_{1}$-ring $R$, and an $R$-bimodule $M$, i.e., a left $R\otimes_{\sss} R^{op}$-module. Working in the category $\Fil(\LMod_{R\otimes_{\sss}R^{op}})$, we can take $\tau_{\geq *}M$ to be the Whitehead filtration on $M$, i.e., the filtered spectrum $$\ldots\to \tau_{\geq m}M\to\tau_{\geq m-1}M\to\ldots.$$  Applying the functor $\THH(R;-):\Fil(\LMod_{R\otimes_\sss R^{op}})\to \Fil(\LMod_R)$, we get a filtered object $\THH(R,\tau_{\geq *}M)$.
\begin{definition}\label{def2.3}
 The \textit{Atiyah-Hirzebruch spectral sequence} (AHSS) with coefficients in $M$ is the spectral sequence associated to $\THH(R,\tau_{\geq *}M)$, which has signature $$E_1^{s,t}=\THH_{-s}(R,\pi_{t}(M))\Rightarrow \THH_{t-s}(R,M).$$ In the special case $M=R$, we will refer to the resulting spectral sequence as the \textit{Whitehead spectral sequence} (WSS).
\end{definition} 
In the above definition, $\pi_t(M)$ is treated as a discrete left $R\otimes_{\sss}R^{op}$-module concentrated in degree 0.
\begin{lemma}\label{lemma2.4}
	Let $R$ be a connective $\bb{E}_1$-ring spectrum and $M$ an $R$-bimodule.  The filtered object $\THH(R;\tau_{\geq *}M)$ is complete, so the Atiyah-Hirzebruch spectral sequence converges.
\end{lemma}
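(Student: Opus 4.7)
The plan is to reduce completeness to a connectivity estimate, namely to show that $\THH(R;\tau_{\geq n}M)$ is $n$-connective for every $n$. Given this, the inverse limit $\varprojlim_n \THH(R;\tau_{\geq n}M)$ has vanishing homotopy in each fixed degree (for any degree $k$, the tower of $\pi_k$'s is eventually zero past $n=k+1$, and a Mittag--Leffler tower of abelian groups has vanishing $\varprojlim^1$), hence is zero. This is precisely the completeness required.

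For the connectivity estimate, I would expand $\THH(R;N)\simeq R\otimes_{R\otimes_{\sss}R^{op}}N$ via the two-sided bar resolution, whose $k$-simplices have the form $R\otimes_{\sss}(R\otimes_{\sss}R^{op})^{\otimes k}\otimes_{\sss}N$. Since $R$, and hence $R\otimes_{\sss}R^{op}$, is connective and $N=\tau_{\geq n}M$ is $n$-connective, each such simplex is $n$-connective (a smash product of connective spectra with an $n$-connective spectrum is $n$-connective). Geometric realization preserves $n$-connectivity in spectra, since the connective part of the canonical $t$-structure is closed under sifted colimits, so $\THH(R;\tau_{\geq n}M)$ is $n$-connective, giving the estimate.

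With completeness in hand, convergence of the spectral sequence follows by the standard machinery of complete, exhaustive filtered spectra reviewed above: the underlying object $\varinjlim_{n\to-\infty}\THH(R;\tau_{\geq n}M)$ recovers $\THH(R;M)$, because $\THH(R;-)$ preserves colimits and $\varinjlim_{n\to-\infty}\tau_{\geq n}M\simeq M$, while the $E_1$-page is identified with $\THH_{-s}(R,\pi_t(M))$ via the graded pieces $\tau_{\geq t}M/\tau_{\geq t+1}M\simeq\Sigma^t\pi_t(M)$. I expect no substantive obstacle; the one subtlety worth verifying carefully is that the $n$-connectivity of each bar simplex survives geometric realization without invoking any commutativity of $R$, which is a routine consequence of the $t$-structure formalism and in particular does not require $R$ to be $\bb{E}_\infty$.
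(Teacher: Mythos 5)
Your proposal is correct and follows essentially the same approach as the paper: the paper simply invokes that $\THH(R;-)$ is right $t$-exact (so that the $n$th filtered piece is $n$-connective, forcing the limit to vanish), while you unwind that right $t$-exactness explicitly via the bar resolution and closure of connective objects under sifted colimits. The extra detail you supply is a correct justification of the fact the paper takes for granted.
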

\begin{proof}
Since $\THH(R;-)$ is right t-exact, the $n$th filtered piece of $\THH(R;\tau_{\geq *}M)$ is $n$-connective, and the result follows.
\end{proof}
We recall that discrete modules over $R\otimes_{\sss}R^{op}$ in degree 0 are exactly the modules which live in the heart of $\LMod_{R\otimes_{\sss}R^{op}}$ with respect to the standard $t$-structure, and are thus in bijection with $\pi_0(R\otimes_{\sss} R^{op})\simeq \pi_0(R)\otimes_{\zz}\pi_{0}(R)^{op}$-modules.  The AHSS is functorial in $M$, and we also note that:
\begin{proposition}\label{proposition2.5}
	Let $R$ and $S$ be connective $\bb{E}_1$-ring spectra, and $f:R\to S$ a $\bb{E}_1$-ring map.  If $M$ is an $R$-bimodule, then the natural map $\THH(R,M)\to \THH(S,(S\otimes_{\sss}S^{op}\otimes_{R\otimes_{\sss}R^{op}} M))$ induces a map on the associated Atiyah-Hirzebruch spectral sequences.
\end{proposition}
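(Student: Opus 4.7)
The plan is to lift the base-change morphism to a map of filtered objects whose colimit recovers the given map on $\THH$; since any morphism of filtered objects induces a map of the associated spectral sequences, this immediately gives the conclusion. Concretely, we seek a filtered morphism
\[ \THH(R;\tau_{\geq *}M) \longrightarrow \THH(S;\tau_{\geq *}(S\otimes_{\sss}S^{op}\otimes_{R\otimes_{\sss}R^{op}} M)). \]

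The construction has two steps. First, extension of scalars $S\otimes_{\sss}S^{op}\otimes_{R\otimes_{\sss}R^{op}}-$ is right $t$-exact, because the tensor product of connective modules over a connective $\bb{E}_1$-ring remains connective by \cite[~Proposition 7.1.1.13]{HA}. Hence $S\otimes_{\sss}S^{op}\otimes_{R\otimes_{\sss}R^{op}}\tau_{\geq n}M$ is $n$-connective, and by the universal property of the Whitehead cover the canonical map into $S\otimes_{\sss}S^{op}\otimes_{R\otimes_{\sss}R^{op}}M$ factors uniquely through $\tau_{\geq n}(S\otimes_{\sss}S^{op}\otimes_{R\otimes_{\sss}R^{op}}M)$. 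Assembling these factorizations over $n$ produces a map of filtered $S$-bimodules $S\otimes_{\sss}S^{op}\otimes_{R\otimes_{\sss}R^{op}}\tau_{\geq *}M \to \tau_{\geq *}(S\otimes_{\sss}S^{op}\otimes_{R\otimes_{\sss}R^{op}}M)$, to which we then apply $\THH(S;-)$.

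Second, for any $R$-bimodule $N$ the ring map $f:R\to S$ induces a natural map $R\otimes_{R\otimes_{\sss}R^{op}}N \to S\otimes_{R\otimes_{\sss}R^{op}}N$, which under the canonical identification $S\otimes_{R\otimes_{\sss}R^{op}}N \simeq \THH(S; S\otimes_{\sss}S^{op}\otimes_{R\otimes_{\sss}R^{op}}N)$ recovers the base-change comparison on $\THH$. This transformation is functorial in $N$ and therefore extends by postcomposition to filtered $R$-bimodules. Applied to $\tau_{\geq *}M$ and composed with the output of the first step, it yields the desired filtered lift, whose colimit is the natural map in the statement. The only nontrivial point is the $t$-exactness used in step one; everything else is formal naturality of $\THH(-;-)$ together with the fact that a functor between stable categories induces one on filtered objects by postcomposition, so the composite is genuinely a morphism in $\Fil(\LMod_S)$ and hence passes to a map of spectral sequences.
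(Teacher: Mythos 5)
Your proof is correct and follows essentially the same route as the paper's: both construct the filtered morphism by factoring the base-change comparison through the two maps $\tau_{\geq n}M \to S\otimes_{\sss}S^{op}\otimes_{R\otimes_{\sss}R^{op}}\tau_{\geq n}M$ (giving the $\THH(R,-)\to\THH(S,-)$ comparison) and $S\otimes_{\sss}S^{op}\otimes_{R\otimes_{\sss}R^{op}}\tau_{\geq n}M \to \tau_{\geq n}(S\otimes_{\sss}S^{op}\otimes_{R\otimes_{\sss}R^{op}}M)$ (using right $t$-exactness of base change), then composing. Your write-up just spells out the $t$-exactness and the cancellation $\THH(S;S\otimes_{\sss}S^{op}\otimes_{R\otimes_{\sss}R^{op}}N)\simeq S\otimes_{R\otimes_{\sss}R^{op}}N$ more explicitly than the paper does.
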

\begin{proof}
It suffices to show that we get a map on the level of filtered objects.  Note that we have a natural factorization $$\tau_{\geq n}M \to (S\otimes_{\sss}S^{op}) \otimes_{R\otimes_{\sss}R^{op}} \tau_{\geq n}M\to \tau_{\geq n}( (S\otimes_{\sss}S^{op})\otimes_{R\otimes_{\sss}R^{op}} M).$$  Applying the functor $\THH(R,-)$ and applying the cyclic invariance property gives a map of filtered objects $$\THH(R,\tau_{\geq *}M)\to \THH(S,(S\otimes_{\sss}S^{op})\otimes_{R\otimes_{\sss}R^{op}}\tau_{\geq *}M).$$ The morphism induced by the second map above gives a map
$$\THH(S,(S\otimes_{\sss}S^{op})\otimes_{R\otimes_{\sss}R^{op}}\tau_{\geq *}M)\to \THH(S, \tau_{\geq*}((S\otimes_{\sss}S^{op})\otimes_{R\otimes_{\sss}R^{op}} M)),$$
and the composite provides our desired map of filtered objects.
\end{proof}

\begin{lemma}\label{lemma2.6}
	Suppose $R$ is a connective $\bb{E}_n$-ring spectrum for some $n\geq 4$.  Then the Atiyah-Hirzebruch spectral sequence with coefficients in $M$ is multiplicative whenever $M$ is an $\bb{E}_2$-$R$-algebra.
\end{lemma}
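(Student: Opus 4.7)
The plan is to upgrade $\THH(R,\tau_{\geq *}M)$ to a filtered $\bb{E}_1$-algebra in $\Fil(\mo{Sp})$; its associated spectral sequence then inherits a multiplicative structure with the standard Leibniz rule on $d_r$-differentials by the standard formalism \cite[\textsection 1.2.2]{HA} for spectral sequences arising from filtered ring spectra.

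To carry this out, first note that $\tau_{\geq *}M$ should be exhibited as a filtered $\bb{E}_2$-$R$-algebra. The hypothesis $n \geq 4$ ensures that the bimodule category $\LMod_{R\otimes_{\sss}R^{op}}$ has enough monoidal structure (via $\otimes_R$) for $\bb{E}_2$-algebras in it to make sense, and $M$ is one such by hypothesis. The filtered version of Remark \ref{rem2.2}, combined with the monoidality of the $\tau^1$ $t$-structure from \cite[Lemma 2.6]{lee2023topological}, exhibits the Whitehead filtration functor $\tau_{\geq *}(-):\LMod_{R\otimes_{\sss}R^{op}}\to \Fil(\LMod_{R\otimes_{\sss}R^{op}})$ as lax symmetric monoidal, so it preserves $\bb{E}_k$-algebra structures. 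Next, using Dunn additivity together with $n \geq 4$, one views $R$ as an $\bb{E}_2$-algebra in $\bb{E}_2$-algebras, which endows $\THH(R,-)=R\otimes_{R\otimes_{\sss}R^{op}}(-)$ with the structure of a lax $\bb{E}_1$-monoidal functor on $\bb{E}_2$-$R$-algebras---analogous to the fact that the factorization homology $\int_{S^1}A$ of an $\bb{E}_2$-algebra $A$ is naturally an $\bb{E}_1$-algebra. Applying this pointwise in the filtered setting yields the desired filtered $\bb{E}_1$-algebra $\THH(R,\tau_{\geq *}M)$, whose underlying object recovers $\THH(R,M)$ with its standard multiplication.

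The main obstacle is the $\bb{E}_n$-bookkeeping in the second half of the argument: carefully tracking how Dunn additivity lifts to the filtered bimodule category $\Fil(\LMod_{R\otimes_{\sss}R^{op}})$ and verifying that $\THH(R,-)$ genuinely lands in $\bb{E}_1$-algebras rather than in some weaker structure. The numerical hypothesis $n\geq 4$ is precisely what makes both the input $\bb{E}_2$-algebra structure on $M$ and the output $\bb{E}_1$-structure on $\THH(R,M)$ simultaneously coherent, and a looser hypothesis would require extra care in handling the bimodule category's monoidal structure.
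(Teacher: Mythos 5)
Your overall strategy — equip $\THH(R,\tau_{\geq*}M)$ with the structure of a filtered $\bb{E}_1$-algebra by combining Dunn additivity, lax monoidality of the base-change functor, and compatibility of the Whitehead filtration with the monoidal structure — matches the paper's. But the middle step, which you yourself flag as ``the main obstacle,'' is exactly the substance of the proof, and the sketch you give does not carry it.

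The issue is your assertion that Dunn additivity ``endows $\THH(R,-) = R\otimes_{R\otimes_{\sss}R^{op}}(-)$ with the structure of a lax $\bb{E}_1$-monoidal functor on $\bb{E}_2$-$R$-algebras,'' justified by analogy with factorization homology. The analogy is off-target: $\int_{S^1}A$ being an $\bb{E}_1$-algebra for $A$ an $\bb{E}_2$-algebra is a statement about $\THH(A)$ itself, not about $\THH(R,M)$ with coefficients in a proper bimodule $M \ne R$, so it does not supply the lax monoidality you want on the coefficient variable. What is actually needed, and what the paper supplies, is: (i) the canonical multiplication $R\otimes_\sss R^{op}\to R$ is an $\bb{E}_{n-1}$-algebra map (this is where Dunn additivity enters, via \cite[Theorem 5.1.2.2]{HA}, realizing $R$ as an $\bb{E}_1$-algebra in $\mo{Alg}_{\bb{E}_{n-1}}(\mo{Sp})$); and then (ii) \cite[Proposition 7.1.2.6]{HA}, which upgrades the base-change functor $R\otimes_{R\otimes_\sss R^{op}}-:\LMod_{R\otimes_\sss R^{op}}\to\LMod_R$ to an $\bb{E}_{n-2}$-monoidal functor, with the source carrying the $\otimes_{R\otimes_\sss R^{op}}$-relative tensor product. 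That second citation is the missing key lemma in your write-up: without it, ``lax $\bb{E}_1$-monoidal'' is an assertion, not a consequence.

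Two smaller imprecisions. First, you say the relevant monoidal structure on $\LMod_{R\otimes_\sss R^{op}}$ is ``via $\otimes_R$''; the monoidal structure against which base change is monoidal in \cite[Proposition 7.1.2.6]{HA} is $\otimes_{R\otimes_\sss R^{op}}$, not the $\otimes_R$ bimodule tensor product. Second, since $\LMod_{R\otimes_\sss R^{op}}$ is only $\bb{E}_{n-1}$-monoidal rather than symmetric monoidal, the Whitehead-filtration step should be phrased as compatibility of the $t$-structure with an $\bb{E}_k$-monoidal structure (cf.\ \cite[Lemma 7.1.1.7]{HA}), rather than appealing to lax \emph{symmetric} monoidality as in Remark~\ref{rem2.2}, which was established in the ambient symmetric monoidal setting of $\mo{Sp}$.
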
  
\begin{proof}
Suppose $R$ is a $\bb{E}_n$-algebra for some $n\geq 4$.  In particular, $R\otimes_{\sss}R^{op}$ is an $\bb{E}_n$-algebra as well, and the canonical multiplication map $R\otimes_{\sss}R^{op}\to R$ is an $\bb{E}_{n-1}$-algebra map, as given any $R\in\mo{Alg}_{\bb{E}_1}(\mc{C})$ for some symmetric monoidal $\infty$-category $\mc{C}$ (here we take $\mc{C}=\mo{Alg}_{\bb{E}_{n-1}}(\mo{Sp})$, and use \cite[~Theorem 5.1.2.2]{HA}), there is a map $R\otimes R^{op}\simeq R\otimes R\to R$ in $\mc{C}$ determined by the $\bb{E}_1$-multiplication on $R$.  It follows from \cite[~Proposition 7.1.2.6]{HA} that the functor $R\otimes_{R\otimes_{\sss}R^{op}}-:\LMod_{R\otimes_{\sss}R^{op}}\to \LMod_R$ is $\bb{E}_{n-2}$-monoidal, and in particular, $\bb{E}_{2}$-monoidal.  Since the Whitehead filtration is compatible with the monoidal structure \cite[~Lemma 7.1.1.7]{HA}, this gives the claim. 
\end{proof}
Next, we want to be able to find other useful spectral sequences to compare these to that will allow us to figure out some of their differentials.  This is where cyclic invariance comes in.  We work here in the case where $R$ is an $\bb{E}_{\infty}$-ring, but the setup should work more generally with minimal modifications.
\begin{proposition}\label{proposition2.7}
	For $R$ an $\bb{E}_{1}$-ring, and $S$ an $\bb{E}_1$-$R$-algebra, we have an equivalence $\THH(R,S)\simeq \THH(S, S\otimes_{R}S)$.  If $R$ and $S$ are both $\bb{E}_{\infty}$-algebras, then this is an equivalence of $\bb{E}_\infty$-algebras.
\end{proposition}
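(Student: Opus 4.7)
The plan is to unwind $\THH$ on both sides in terms of relative tensor products and to identify them via the standard realization of $S \otimes_R S$ as a base change of $R$ regarded as its own bimodule. Write $A^e := A \otimes_\sss A^{op}$ for any $\bb{E}_1$-ring $A$, so that $\THH(A,N) \simeq A \otimes_{A^e} N$ for an $A$-bimodule $N$. The two-sided bar construction furnishes a simplicial resolution $A \simeq |B_\bullet(A,A,A)|$ of $A$ as an $A^e$-module, with $B_n(A,A,A) = A^{\otimes(n+2)}$. Base-changing this resolution along the $\bb{E}_1$-algebra map $R^e \to S^e$ induced by $f\colon R \to S$ yields precisely the two-sided bar complex $B_\bullet(S,R,S)$, whose geometric realization is $S \otimes_R S$. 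This supplies a natural equivalence of $S$-bimodules
\[ S \otimes_R S \;\simeq\; S^e \otimes_{R^e} R. \]

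Combining this with associativity of the relative tensor product,
\[ \THH(S,\, S \otimes_R S) \;=\; S \otimes_{S^e} (S \otimes_R S) \;\simeq\; S \otimes_{S^e} (S^e \otimes_{R^e} R) \;\simeq\; S \otimes_{R^e} R \;\simeq\; R \otimes_{R^e} S \;=\; \THH(R,S), \]
where the penultimate equivalence uses the swap involution $R^e \simeq (R^e)^{op}$, which identifies the left and right $R^e$-module structures on an $R$-bimodule and so allows the tensor factors to be interchanged.

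For the $\bb{E}_\infty$ refinement, when $R$ and $S$ are $\bb{E}_\infty$ we have $R^e \simeq R \otimes R$ and $S^e \simeq S \otimes S$, and $S \otimes_R S$ is canonically an $\bb{E}_\infty$-$R$-algebra, being the pushout of $S \leftarrow R \to S$ in $\mo{CAlg}_R$. Each step of the chain above respects the symmetric monoidal structure: $\THH$ of a commutative ring with commutative coefficients is again commutative, base change along the map of commutative algebras $R^e \to S^e$ is symmetric monoidal, and the swap involution acts by $\bb{E}_\infty$-algebra maps. The main obstacle is confirming that the identification $S \otimes_R S \simeq S^e \otimes_{R^e} R$ is itself an $\bb{E}_\infty$-equivalence; this can be handled either by working in $\mo{CAlg}_{S^e}$ and checking that both sides corepresent the same functor on commutative $S^e$-algebras, or alternatively by invoking the colimit description $\THH(A) \simeq A \otimes S^1$ in $\mo{CAlg}$ and deducing the statement from the functoriality of this construction in the algebra $A$.
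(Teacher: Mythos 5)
Your argument is correct, and it takes a genuinely different route from the paper, which simply cites \cite[Proposition 6.2]{Blumberg_2012} for the underlying equivalence and \cite[Lemma 4.8, Remark 4.10]{H2020} for the multiplicative refinement, giving no independent proof. Your self-contained derivation rests on three verifiable facts: the identification $S \otimes_R S \simeq S^e \otimes_{R^e} R$ (which follows, as you say, by base-changing the two-sided bar resolution $|B_\bullet(R,R,R)| \simeq R$ of $R$ as an $R^e$-module along $R^e \to S^e$, levelwise yielding $B_\bullet(S,R,S)$); associativity of relative tensor products; and the cyclic symmetry $S \otimes_{R^e} R \simeq R \otimes_{R^e} S$, which is cleanest to see directly from the cyclic bar model, where resolving the $R$-factor on either side of $\otimes_{R^e}$ produces the same simplicial object $|S \otimes R^{\otimes \bullet}|$. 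This is more elementary and more transparent than the Morita-theoretic framing in \cite{Blumberg_2012}, at the mild cost of having to track the swap involution on $R^e$.

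For the $\bb{E}_\infty$ refinement your sketch is serviceable but slightly overcomplicated: once $R,S$ are $\bb{E}_\infty$ there is no $(-)^{op}$ and no swap involution to worry about, and every tensor product in sight is a pushout in $\mathrm{CAlg}$. In particular the identification $S \otimes_R S \simeq (S \otimes S) \otimes_{R \otimes R} R$ is immediate from the universal property of pushouts (both corepresent pairs of $\bb{E}_\infty$-maps $S \rightrightarrows T$ agreeing after restriction to $R$), and then the whole chain
\[
S \otimes_{S\otimes S} (S\otimes_R S) \simeq S \otimes_{S\otimes S}\bigl((S\otimes S)\otimes_{R\otimes R} R\bigr) \simeq S \otimes_{R\otimes R} R \simeq \THH(R,S)
\]
is a sequence of canonical equivalences of pushout squares in $\mathrm{CAlg}$, with no separate check needed. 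You gesture at this with the corepresentability remark, so the idea is there; I would simply say so directly rather than presenting it as an ``obstacle'' to be handled.
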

\begin{proof}
This follows from \cite[~Proposition 6.2]{Blumberg_2012} and \cite[~Lemma 4.8, Remark 4.10]{H2020}.
\end{proof}

\begin{definition}\label{definition2.8}
When we take the Atiyah-Hirzebruch spectral sequence with coefficients in $S\otimes_{R}S$ in a situation as above, the resulting spectral sequence will be called the \textit{Brun-Atiyah-Hirzebruch spectral sequence} (BAHSS).  The BAHSS has signature $$E_1^{s,t}=\THH_{-s}(S,\pi_{t}(S\otimes_{R}S))\Rightarrow \THH_{t-s}(R,S).$$
\end{definition}
The BAHSS was introduced by Brun in \cite{BRUN200029} and studied by H\"{o}ning in \cite{H2020}.

For $R$ an arbitrary connective $\bb{E}_n$-ring spectrum, this already gives us a lot to compare $R$ with.  Namely, it is clear that $\THH(R,R\otimes_{\sss}R^{op})\simeq R$, so applying the Whitehead filtration to the coefficients produces a spectral sequence $E_1^{s,t}=\THH_{-s}(R,\pi_{t}(R))\Rightarrow \pi_{t-s}(R)$.  Furthermore, since $\pi_*(R\otimes_{\sss}R)\to\pi_*(R)$ is split as a map of graded $\pi_*(R)$-modules, the natural map that $R\otimes_{\sss}R^{op}\to R$ induces on the level of spectral sequence is surjective on the $E_1$-page.  Thus, if one could determine the structure of the spectral sequence associated to $\THH(R,R\otimes_{\sss}R^{op})$, one could compute $\THH(R)$, although the structure of the former can get quite complicated in general.  

A similar style spectral sequence that will feature prominently in \textsection 6 is the May-type spectral sequence, studied in detail in \cite{angeltveit2015algebraic}, \cite{AngeliniKnoll2018}, and \cite{keenan2020filtration}.  If $R$ is a connective $\bb{E}_{n}$-ring spectrum, then $\tau_{\geq *}R$ is an $\bb{E}_n$-algebra object in filtered spectra.  In particular, one can apply the construction of topological Hochschild homology to $\tau_{\geq *}R$ internally to filtered spectra, so get a filtered spectrum $$\THH(\tau_{\geq *}R)\simeq \tau_{\geq *R}\otimes_{(\tau_{\geq *}R)\otimes_{\sss^0}(\tau_{\geq *}R^{op})}\tau_{\geq *}R,$$ which allows us to define,
\begin{definition}\label{definition2.9}
The \textit{May-type spectral sequence} for a connective $\bb{E}_n$-ring spectrum (resp. with coefficients in a bimodule $M$) is the spectral sequence associated to the filtered object $\THH(\tau_{\geq *}R)$ (resp. $\THH(\tau_{\geq *}R,\tau_{\geq *}M)$).
\end{definition}
The associated graded takes the form of the topological Hochschild homology of $\pi_{*}R$, which is an $\bb{E}_{n}-\zz-$algebra, making it somewhat more amenable to computations.  We will also consider some slight variants of this spectral sequence later.

\begin{definition}\label{definition2.10}
The \textit{Bockstein algebra} $\zz_{(2)}^{bok}$ is the filtered $\bb{E}_{\infty}\text{-}\zz_{(2)}$-algebra with $(\zz_{(2)}^{bok})_i=\zz_{(2)}$ for all $i$, where the transition map $(\zz_{(2)}^{bok})_i\to (\zz_{(2)}^{bok})_{i-1}$ is the identity if $i\leq 0$, and multiplication by 2 if $i>0$.

Given any $\bb{E}_{\infty}\text{-}\zz_{(2)}$-algebra $R$, we can consider the filtered algebra $R^0\otimes_{\zz_{(2)}^0}\zz_{(2)}^{bok}$, whose associated spectral sequence will be termed the \textit{Bockstein spectral sequence}.  This is a multiplicative spectral sequence which computes $\pi_*(R_2^{\wedge})$.
\end{definition}
The algebra $\zz_{(2)}^{bok}$ lives in the heart of $\mo{Fil}(\mo{Sp})$ for the $\tau^0_{*}$-t-structure from Definition \ref{definition2.1}, and there it is an ordinary commutative algebra as can be checked on the nose, and thus extends to an $\bb{E}_{\infty}$-algebra in $\mo{Fil}(\mo{Sp})$.

Often throughout this paper, particularly when making use of Bockstein spectral sequences, we will encounter ``$\tilde{v}_0$-towers'' in total degree $i$, which could either come from a copy of $\zz_{(2)}$ in the $i$-th homotopy group of the total object, or a $\bb{Q}/\zz_{(2)}$-copy in the $i-1^{\text{st}}$-homotopy group of the total object.  The following argument (sometimes applied repeatedly to a given situation) ensures that we always land in the former case.
\begin{proposition}\label{proposition2.11}
Suppose that $R$ is a connective $\bb{E}_{\infty}$-ring spectrum, such that $\pi_0(R)$ is Noetherian, and $\pi_i(R)$ is finitely generated as a $\pi_0(R)$-module for all $i\geq 0$.  Given connective $R$-modules $M,N\in\mo{Mod}(R)$, with $\pi_i(M)$ and $\pi_i(N)$ finitely generated $\pi_0(R)$-modules for all $i\geq 0$, then $\pi_i(M\otimes_R N)$ is also a finitely generated $\pi_0(R)$-module for all $i\geq 0$.
\end{proposition}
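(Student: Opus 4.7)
The plan is to identify the class of connective $R$-modules with finitely generated homotopy groups over $\pi_0(R)$ with the class of \emph{almost perfect} $R$-modules in the sense of \cite[\textsection 7.2.4]{HA}, and then to invoke closure of almost perfection under the relative tensor product $\otimes_R$.

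First I would invoke \cite[Proposition 7.2.4.17]{HA}: under the running hypotheses on $R$, a connective $R$-module $P$ is almost perfect if and only if $\pi_i(P)$ is a finitely generated $\pi_0(R)$-module for every $i\geq 0$. The nontrivial direction constructs, inductively on $i$, a cell structure on $P$ with only finitely many free $R$-cells in each dimension: having matched the homotopy of $P$ through degree $i-1$ by attaching finitely many cells, one chooses finitely many generators for the degree-$i$ homotopy of the current cofiber (possible because $\pi_0(R)$ is Noetherian and the homotopy in question is finitely generated in each degree) and attaches the corresponding $R$-cells. Applied to $M$ and $N$, this produces presentations $M\simeq \colim M_k$ and $N\simeq \colim N_k$ as sequential colimits of perfect $R$-modules with only finitely many cells in each dimension.

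Next, I would observe that almost perfect $R$-modules are closed under $\otimes_R$. Concretely, the product filtration exhibits $M\otimes_R N$ as the colimit of the $M_k\otimes_R N_k$, whose associated graded in total dimension $k$ is a finite direct sum of shifts of $R$ indexed by pairs $(i,j)$ with $i+j=k$; this is a cell structure on $M\otimes_R N$ with finitely many cells in each dimension, so $M\otimes_R N$ is itself almost perfect. Applying \cite[Proposition 7.2.4.17]{HA} in the reverse direction to $M\otimes_R N$ then gives the desired conclusion. The main obstacle is really the first step, the equivalence between almost perfection and finite generation of homotopy groups, which genuinely uses the Noetherian hypothesis on $\pi_0(R)$ in order to produce finite sets of generators at each stage of the cell attachment; once that is in place, the tensor-product statement is a formal consequence of the cell-wise description.
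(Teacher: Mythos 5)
Your proof is correct and reaches the same conclusion, but it takes a genuinely different route through the middle of the argument. Both proofs rest on the same first step: invoking \cite[Proposition 7.2.4.17]{HA} to translate the finite-generation hypotheses (under the Noetherian assumption on $\pi_0(R)$) into the statement that $M$ and $N$ are almost perfect. From there the paper proceeds abstractly: fixing $N$, it considers the thick stable subcategory $\mc{C}\subseteq\Mod(R)$ of those $X$ with $X\otimes_R N$ almost perfect, verifies thickness (via long exact sequences and Noetherianity for closure under retracts), observes $R\in\mc{C}$ to deduce that $\mc{C}$ contains all perfect modules, and finally handles an arbitrary almost perfect $M$ by approximating it with a perfect $A_n\to M$ whose fiber is highly connective, so that $\pi_k(M\otimes_R N)\simeq\pi_k(A_n\otimes_R N)$ in the desired range. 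You instead build explicit cell filtrations $M_0\to M_1\to\cdots$ and $N_0\to N_1\to\cdots$ with finitely many $\Sigma^k R$-cells at stage $k$, and observe that the convolution filtration on $M\otimes_R N$ has associated graded in total degree $m$ equal to $\bigoplus_{i+j=m}(M_i/M_{i-1})\otimes_R(N_j/N_{j-1})$, again a finite sum of shifts of $R$, which is enough to run \cite[Proposition 7.2.4.17]{HA} in reverse. Your approach in effect reproves directly that connective almost perfect $R$-modules are closed under $\otimes_R$, avoiding the thick-subcategory machinery, and is somewhat more self-contained and constructive; the paper's approach is more categorical and sidesteps the need to exhibit a cell structure on the tensor product. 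One small imprecision in your write-up: the diagonal filtration $\{M_k\otimes_R N_k\}$ is cofinal and does recover the colimit, but the associated-graded description you give (indexed by pairs $(i,j)$ with $i+j=m$) corresponds to the Day-convolution filtration $\{\colim_{i+j\leq m}M_i\otimes_R N_j\}$ rather than the diagonal one; phrasing it via the convolution filtration makes the finite-cells-per-dimension claim immediate.
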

\begin{proof}
In this case, by \cite[~Proposition 7.2.4.17]{HA}, $M$ and $N$ are both almost perfect as $R$-modules.  Consider the category $\mc{C}\subseteq \mo{Mod}(R)$ of all $R$-modules $X$ such that $X\otimes_{R} N$ is almost perfect.  Then it is clear that $\mc{C}$ is closed under suspensions, and using that any fiber sequence $X\to Y \to Z$ extends to a fiber sequence $X\otimes_R N\to Y\otimes_R N\to Z\otimes_R N$, the long exact sequence in homotopy groups (together with Noetherianity of $\pi_0(R)$) shows that $\mc{C}$ satisfies the 2-out-of-3 property, so is a stable subcategory.  Furthermore, $\mc{C}$ is also closed under taking summands, since any submodule of a finite type module over a Noetherian ring is still finite type.  Therefore, $\mc{C}$ is a thick stable subcategory, and by assumption, it contains the unit $R$.  Since the thick stable category generated by $R$ is the category of perfect $R$-modules, $\mc{C}$ contains every perfect $R$-module.  Our claim would follow if we knew that $\mc{C}$ contained all almost perfect $R$-modules, so suppose $M$ is almost perfect, and let $n\in\zz $.  By building $M$ cellularly, i.e., if $M$ is $i$-connective, by beginning with a presentation $\pi_0(R)^{\oplus n}\to \pi_0(R)^{\oplus m}\to \pi_i(M)$, and noting that this gives us a map $\Sigma^{i}\mo{cofib}(R^{\oplus n}\to R^{\oplus m})\to M$ with $i+1$-connective fiber, and repeating with the fiber, we can build a perfect $R$-module $A_n$ with a map $A_n\to M$ and $n+1$-connective fiber.  But then $\pi_k(M\otimes_{R} N)\simeq \pi_k(A_n\otimes_{R}N)$ for all $k\leq n$, and since $A_n$ is perfect, this shows that $\pi_k(M\otimes_{R}N)$ is finitely generated as a $\pi_0(R)$-module, as desired.
\end{proof}

\newpage
\section{Topological Hochschild Homology of $\tmf$ with Coefficients}
\indent\indent In this section, we primarily study the topological Hochschild homology of connective topological modular forms with coefficients in $\fff_2$ and $\zz_{(2)}$, but also include a couple analogous computations at an odd prime $p$.  These computations were studied independently by Bruner-Rognes in a work in progress \cite{Rognes14tmf}.  We begin with $\THH(\tmf,\fff_2)$, where the cyclic invariance condition will do most of the heavy lifting.  As a precursor, let's analyze a simple example.
\begin{example}\label{ex3.1}  Consider the Brun-Atiyah-Hirzebruch spectral sequence with signature $$E_1^{s,t}=\THH_{-s}(\fff_2,\pi_{t}(\fff_2\otimes_{\sss}\fff_2))\Rightarrow\fff_2.$$  By B\"{o}kstedt's computation of $\THH(\fff_2)$, we know that the $E_1$-page is given by $$E_1^{s,t}=\fff_2[\xi_1,\xi_2,\xi_3,\ldots]\otimes_{\fff_2}\fff_2[u],$$ where $\xi_i$ has bidegree $(0,2^i-1)$, and $u$ has bidegree $(-2,0)$.  The differentials must vanish on the classes $\xi_i$ for degree reasons, so by the Leibniz rule, to understand the $d_1$ differential, we only need to understand how it acts on the class $u$.  Since the BAHSS converges to $\fff_2$, the class $u$ cannot be a permanent cycle, and the only differential that can kill it is $d_1(u)=\xi_1$.
	\vskip1pt
	\begin{center}
		\begin{sseqpage}[title ={$E_1$-page}, axes type = center,  classes = {draw = none },
			x range = {-11}{0}, y range = {0}{3}, x axis origin = {1}, y tick gap = {-0.5 cm}, x label = { s }, y label = { t }, x label style={ yshift = -15pt }, y label style={rotate=270, yshift=370pt, xshift=20pt}, class placement transform={scale=2.5}
			]
			\class["1"](0,0)
			\class["u"](-2,0)
			\class["u^2"](-4,0)
			\class["u^3"](-6,0)
			\class["u^4"](-8,0)
			\class["u^5"](-10,0)
			\class["\xi_1"](0,1)
			\class["u\xi_1"](-2,1)
			\class["u^2\xi_1"](-4,1)
			\class["u^3\xi_1"](-6,1)
			\class["u^4\xi_1"](-8,1)
			\class["u^5\xi_1"](-10,1)
			\class["\xi_1^2"](0,2)
			\class["u\xi_1^2"](-2,2)
			\class["u^2\xi_1^2"](-4,2)
			\class["u^3\xi_1^2"](-6,2)
			\class["u^4\xi_1^2"](-8,2)
			\class["u^5\xi_1^2"](-10,2)
			\class["\xi_1^3"](0,3)
			\class["u\xi_1^3"](-2,3)
			\class["u^2\xi_1^3"](-4,3)
			\class["u^3\xi_1^3"](-6,3)
			\class["u^4\xi_1^3"](-8,3)
			\class["u^5\xi_1^3"](-10,3)
			\class["\xi_2"](0,3)
			\class["u\xi_2"](-2,3)
			\class["u^2\xi_2"](-4,3)
			\class["u^3\xi_2"](-6,3)
			\class["u^4\xi_2"](-8,3)
			\class["u^5\xi_2"](-10,3)
			\d2(-2,0)(0,1)
			\d2(-2,1)(0,2)
			\d2(-6,0)(-4,1)
			\d2(-6,1)(-4,2)
			\d2(-10,0)(-8,1)
			\d2(-10,1)(-8,2)
			\d2(-2,2)(0,3)
			\d2(-6,2)(-4,3)
			\d2(-10,2)(-8,3)
		\end{sseqpage}
	\end{center}
	This leaves $$E_2=\fff_2[\xi_2,\xi_3,\ldots ]\otimes_{\fff_2} \fff_2[u^2].$$  Inductively, we find that $E_k=E_{k+1}$ if $k\neq 2^n-1$, $$E_{2^n}=\fff_2[\xi_{n+1},\xi_{n+2},\ldots]\otimes_{\fff_2}\fff_2[u^{2^n}],$$
and $d_{2^{n}-1}(u^{2^{n-1}})=\xi_n$.
	\end{example}
\begin{remark}\label{rem3.2}
Mike Hopkins pointed out that by the Hopkins-Mahowald theorem \cite{MahowaldRS} (see also \cite[~Theorem 1.3]{Blumberg_2010}), the BAHSS in Example \ref{ex3.1} (resp. in Example \ref{example3.3}), is the ``Thomification'' of the mod 2 (resp. mod $p$) Serre spectral sequence for the path-loop fibration $\Omega^2S^3\to P\Omega S^3\to \Omega S^3$.
\end{remark}

\begin{example}\label{example3.3}
In a similar fashion, we can determine the differentials in the BAHSS associated to $\THH(\fff_p,\fff_p\otimes_{\sss}\fff_p)\simeq\fff_p$, which has signature $$E_1^{s,t}=\fff_p[\xi_1,\xi_2,\ldots]\otimes\Lambda[\tau_0,\tau_1,\ldots]\otimes\fff_p[u]\implies \fff_p,$$ where $|\xi_i|= (0,2(p^i-1))$, $|\tau_i|=(0,2p^i-1)$, and $|u|=(-2,0)$.  The differentials in this spectral sequence are entirely determined by the Leibniz rule and the rules $$d_{2p^i-1}(u^{p^i})=\tau_i,\qquad d_{2p^{i-1}(p-1)}(u^{p^{i-1}(p-1)}\tau_{i-1})=\xi_i.$$
\end{example}
\vskip1pt
\begin{center}
	\begin{sseqpage}[title={$E_1$-page for $p=3$}, axes type = center,  classes = {draw = none },
		x range = {-10}{0}, y range = {0}{5}, x axis origin = {1}, y tick gap = {-0.5 cm}, x label = { s }, y label = { t }, x label style={ yshift = -15pt }, y label style={rotate=270, yshift=340pt}, class placement transform={scale=3.3}
		]
		\class["1"](0,0)
		\class["u"](-2,0)
		\class["u^2"](-4,0)
		\class["u^3"](-6,0)
		\class["u^4"](-8,0)
		\class["u^5"](-10,0)
		\class["\tau_0"](0,1)
		\class["u\tau_0"](-2,1)
		\class["u^2\tau_0"](-4,1)
		\class["u^3\tau_0"](-6,1)
		\class["u^4\tau_0"](-8,1)
		\class["u^5\tau_0"](-10,1)
		\class["\xi_1"](0,4)
		\class["u\xi_1"](-2,4)
		\class["u^2\xi_1"](-4,4)
		\class["u^3\xi_1"](-6,4)
		\class["u^4\xi_1"](-8,4)
		\class["u^5\xi_1"](-10,4)
		\class["\xi_1\tau_0"](0,5)
		\class["u\xi_1\tau_0"](-2,5)
		\class["u^2\xi_1\tau_0"](-4,5)
		\class["u^3\xi_1\tau_0"](-6,5)
		\class["u^4\xi_1\tau_0"](-8,5)
		\class["u^5\xi_1\tau_0"](-10,5)
		\class["\tau_1"](0,5)
		\class["u\tau_1"](-2,5)
		\class["u^2\tau_1"](-4,5)
		\class["u^3\tau_1"](-6,5)
		\class["u^4\tau_1"](-8,5)
		\class["u^5\tau_1"](-10,5)
		\d2(-2,0)(0,1)
		\d2(-4,0)(-2,1)
		\d2(-8,0)(-6,1)
		\d2(-10,0)(-8,1)
		\d2(-2,4)(0,5)
		\d2(-4,4)(-2,5)
		\d2(-8,4)(-6,5)
		\d2(-10,4)(-8,5)
	\end{sseqpage}
\end{center}
\begin{example}\label{example3.4}  We can now compare with the BAHSS computing $\THH(\tmf,\fff_2)$, which has signature $$E_1^{s,t}=\THH_{-s}(\fff_2,\pi_t(\fff_2\otimes_{\tmf}\fff_2))\implies \THH_{t-s}(\tmf,\fff_2).$$   Using \cite[~Theorem 5.13]{mathew2015homology}, either that $\tmf\otimes_{\sss}\fff_2$ forms a sub-Hopf-algebra of $\fff_2\otimes_{\sss}\fff_2$ or explicitly computing the Hopf conjugates of the $\zeta_i$ using \cite{198669}, one finds that $$H_*(\tmf,\fff_2)=\fff_2[\xi_1^8,\xi_2^4,\xi_3^2,\xi_4,\ldots]\subseteq \pi_*(\fff_2\otimes_{\sss}\fff_2).$$  Using that $\fff_2\otimes_{\tmf}\fff_2\simeq \fff_2\otimes_{\tmf\otimes_{\sss}{\fff_2}}(\fff_2\otimes_{\sss}\fff_2)$, and since everything is nice and polynomial, we get that $$\pi_*(\fff_2\otimes_{\tmf}\fff_2)=\fff_2[\xi_1,\xi_2,\xi_3]/(\xi_1^8,\xi_2^4,\xi_3^2)$$ as a quotient of the dual Steenrod algebra.  Thus, our $E_1$-page takes the form
$$E_1^{s,t}=\fff_2[u]\otimes_{\fff_2}\fff_2[\xi_1,\xi_2,\xi_3]/(\xi_1^8,\xi_2^4,\xi_3^2).$$  
Using the comparison map from the BAHSS considered in Example \ref{ex3.1}, we find that $d_1(u)=\xi_1$, leaving 
$$E_2^{s,t}\simeq \fff_2[u^2]\otimes_{\fff_2}\fff_2[\xi_2,\xi_3]/(\xi_2^4,\xi_3^2)\otimes_{\fff_2}\Lambda[\xi_1^7u].$$
 Similarly, $u^2$ maps to $\xi_2$ on the $E_3$-page, leaving
  $$E_4^{s,t}\simeq \fff_2[u^4]\otimes_{\fff_2}\fff_2[\xi_3]/(\xi_3^2)\otimes_{\fff_2}\Lambda[\xi_1^7u, \xi_2^3 u^2],$$ 
  and finally $u^4$ maps to $\xi_3$ on the $E_7$ page, giving $$E_8^{s,t}=E_{\infty}^{s,t}=\fff_2[u^8]\otimes_{\fff_2}\Lambda(u\xi_1^7, u^2 \xi_2^3, u^4\xi_3).$$ 
  At this point, the spectral sequence must degenerate, as $u^8$ cannot hit any nonzero class.  There are no multiplicative extension problems for degree reasons, and we find that 
  $$\pi_*\THH(\tmf,\fff_2)\simeq \pi_*\THH(\fff_2,\fff_2\otimes_{\tmf}\fff_2)\simeq \fff_2[u^8]\otimes_{\fff_2}\Lambda(u\xi_1^7,u^2\xi_2^3, u^4\xi_3)$$
   is the tensor product of a polynomial algebra with a generator in degree 16 with an exterior algebra on classes in degrees 9, 13, and 15.  For future reference, let us write $$\pi_*(\THH(\tmf,\fff_2))=\fff_2[\beta]\otimes_{\fff_2}\Lambda(\lambda_1,\lambda_2,\lambda_3),$$ with $|\beta|=16$, $|\lambda_1|=9$, $|\lambda_2|=13$, and $|\lambda_3|=15$.  We remark that is also a direct consequence of \cite[~Theorem 6.2(b)]{angeltveit2005hopf}.
\end{example}

Before we compute $\THH(\tmf,\zz_{(2)})$, let's warm up by reproving Angelveit-Hill-Lawson's \cite{angeltveit2009topological} computation of $\THH(\ell,\zz_{(p)})$ at an odd prime $p$.
\begin{example}\label{ex3.5}  In a fashion similar to Example \ref{example3.4}, using \cite[~\textsection2]{1986119} and the identification of the Adams summand $\ell$ with the truncated Johnson-Wilson spectrum $\mo{BP}<1>$ (the equivalence is described in the same section), one gets that $\pi_*(\fff_p\otimes_{\ell}\fff_p)\simeq \Lambda[\tau_0,\tau_1]$.  With this in hand, one computes
	$$\THH_*(\ell,\fff_{p})\simeq \fff_p[u^{p^2}]\otimes_{\fff_p}\Lambda[u^{p-1}\tau_0,u^{p(p-1)}\tau_1].$$ 
	 Since $\pi_*(\ell)\simeq \zz_{(p)}[v_1]$, with $|v_1|=2(p-1)$, it is easy to see that $\pi_*(\zz_{(p)}\otimes_{\ell}\zz_{(p)})\simeq \Lambda_{\zz_{(p)}}[\rho]$, with $|\rho|=2p-1$.  Thus, the $E_1$-page of our Brun-Atiyah-Hirzebruch spectral sequence computing $\THH(\ell,\zz_{(p)})$ is given by $$E_{1}=\THH_*(\zz_{(p)})\otimes_{\zz_{(p)}} \Lambda[\rho],$$ with $\rho$ in bidegree $(0,2p-1)$, and $a\in\THH_i(\zz_{(p)})$ in bidegree $(-i,0)$. 
	 
The $\fff_p$-vector space $\pi_{2p-1}(\THH(\ell,\fff_p))$ is one dimensional, which implies that there must be a multiplicative extension between $\rho$ and the $\zz/p\zz$ class in $\THH_{2p-1}(\zz_{(p)})$.  By examining the locations where elements of $\THH(\ell,\fff_p)$ are nonzero, and working inductively, we find that for all $r>1$, there is a nonzero differential on the class $\THH_{2rp-1}(\zz_{(p)})$, which ends up giving us that 
 $$\THH_*(\ell,\zz_{(p)})=\begin{cases}
 	\zz_{(p)}  &\qquad \text{if } *=0, 2p-1\\
 	\zz/p^k\zz &\qquad \text{if } *=2rp^{k+1}-1, 2rp^{k+1}-1+2p-1 \text{ with } k>0, \gcd(r,k)=1\\
 	0 & \qquad \text{otherwise}.
 \end{cases}$$
\end{example}

Now for the more complicated case of $\THH(\tmf,\zz_{(2)})$.  First, we need to understand $\pi_*(\zz_{(2)}\otimes_{tmf}\zz_{(2)})$.  To get at this, we will use two Bockstein spectral sequences, and compare with the following preliminary computations.
\begin{example}\label{example3.6}
First, we examine the Bockstein spectral sequence coming from $\zz_{(2)}^{bok}\otimes_{\sss}\fff_2$ computing $\pi_*(\zz_{(2)}\otimes_{\sss}\fff_2)$, with $E_1$-page $\fff_2[\xi_1,\xi_2,\ldots][\tilde{v_0}]$, with $(s,t)$-degree $|\xi_i|=(1-2^i,0)$ and $|\tilde{v_0}|=(1,1)$.  Recalling either computations from \cite[~Theorem 3.2]{Kochman_1981}, or using the fact that since the sphere is connective, $\pi_0(\zz_{(2)}\otimes_{\sss}\fff_2)\simeq \zz_{(2)}\otimes \fff_2\simeq \fff_2$, we find that the class $\tilde{v_0}$ must vanish in this spectral sequence, and the only way for that to happen is for $d_1(\xi_1)=\tilde{v_0}$.  The Bockstein spectral sequence then collapses at page 2 since there are no nonzero classes on $E_2^{s,t}$ with positive $t$-degree.  This recovers the classic computation $\pi_*(\zz_{(2)}\otimes_{\sss}\fff_2)\simeq \fff_2[\xi_1^2,\xi_2,\xi_3,\ldots]$.
\end{example}
For the second computation, we examine the Bockstein spectral sequence attached to $\zz_{(2)}\otimes_{\sss}\zz_{(2)}^{bok}$, with $E_1$-page $\fff_2[\xi_1^2,\xi_2,\ldots][\tilde{v_0}]$.
\begin{lemma}\label{lem3.7}
In the Bockstein spectral sequence attached to $\zz_{(2)}\otimes_{\sss}\zz_{(2)}^{bok}$, we have $d_1(\xi_{i})=\xi_{i-1}^2\tilde{v_0}$ for all $i>1$, and the spectral sequence collapses on page 2.
\end{lemma}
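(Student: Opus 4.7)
The plan is to identify the $d_1$-differential with (multiplication by $\tilde{v_0}$ of) the mod-$2$ Bockstein operator on $\pi_*(\zz_{(2)}\otimes_{\sss}\fff_2) = \fff_2[\xi_1^2,\xi_2,\xi_3,\ldots]$. Unwinding the exact couple defining the Bockstein spectral sequence shows $d_1(x) = \beta(x)\tilde{v_0}$, where $\beta$ is the reduction mod $2$ of the connecting map of the cofiber sequence $\zz_{(2)}\otimes_{\sss}\zz_{(2)}\xrightarrow{\times 2}\zz_{(2)}\otimes_{\sss}\zz_{(2)}\to\zz_{(2)}\otimes_{\sss}\fff_2$. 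The operator $\beta$ is the classical Bockstein on the mod-$2$ homology of $H\zz_{(2)}$, which acts on $\fff_2[\xi_1^2,\xi_2,\xi_3,\ldots]$ as the unique derivation satisfying $\beta(\xi_1^2) = 0$ and $\beta(\xi_i) = \xi_{i-1}^2$ for $i \geq 2$; equivalently, it is the restriction of the Milnor primitive $Q_0$ from the full dual Steenrod algebra $\fff_2[\xi_1,\xi_2,\ldots]$, where $Q_0$ is the derivation with $Q_0(\xi_1) = 1$ and $Q_0(\xi_i) = \xi_{i-1}^2$ for $i \geq 2$. This yields $d_1(\xi_i) = \xi_{i-1}^2\tilde{v_0}$ for $i > 1$, as claimed.

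For collapse on page $2$, I would argue via bidegree considerations. The spectral sequence is multiplicative by the $\bb{E}_\infty$-structure on $\zz_{(2)}^{bok}$, so $d_1$ is a derivation in characteristic $2$. The identity $d_1(\xi_{i+1}\cdot c) = \xi_i^2\tilde{v_0}\cdot c$ for any cycle $c$ shows that every class at positive $\tilde{v_0}$-filtration, other than the pure tower $\{\tilde{v_0}^c\}_{c\geq 1}$, is a $d_1$-boundary. Thus $E_2$ is concentrated in filtration $0$ for positive total degree and in the $\tilde{v_0}$-tower in total degree $0$. Since $d_r$ for $r \geq 2$ has bidegree $(r+1, r)$, any nonzero higher differential would require a source in total degree $1$ landing in filtration $\geq 2$; but a check of the polynomial algebra $\fff_2[\xi_1^2,\xi_2,\ldots]$ shows it has no classes in total degree $1$, and differentials originating from the $\tilde{v_0}$-tower land in negative total degree. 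Hence $E_r = E_2$ for $r \geq 2$, and $E_2 = E_\infty$.

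The main obstacle is justifying the Bockstein formula $\beta(\xi_i) = \xi_{i-1}^2$, which I would derive from Milnor's classical computation of the integral dual Steenrod algebra. An alternative route is to proceed inductively via comparison with Example~\ref{example3.6}: each $\xi_i$ cannot be a permanent cycle, since $\pi_n(\zz_{(2)}\otimes_{\sss}\zz_{(2)})$ is $2$-torsion for $n > 0$, and bidegree constraints pin down $d_1(\xi_i)$ up to lower-order corrections that can be eliminated by comparison with the mod-$2$ Bockstein of Example~\ref{example3.6}; this becomes subtle for $i \geq 3$, where multiple potential $d_1$-targets of the correct bidegree exist, so in practice Milnor's formula is the cleaner input.
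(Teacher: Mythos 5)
Your proposal is correct and takes essentially the same route as the paper: the key input in both is the classical formula $\beta(\xi_1^2)=0$, $\beta(\xi_i)=\xi_{i-1}^2$ for the mod-$2$ Bockstein on $\pi_*(\zz_{(2)}\otimes_{\sss}\fff_2)=\fff_2[\xi_1^2,\xi_2,\xi_3,\ldots]$ (the paper cites Kochman, you cite Milnor's $Q_0$; these are the same fact), identified with $d_1$ via the exact couple. Your explicit collapse argument --- that $E_2$ in positive $\tilde{v_0}$-filtration reduces to the pure $\tilde{v_0}$-tower (really a Koszul-homology computation showing every positive-degree $\beta$-cycle is a $\beta$-boundary, slightly more than the single identity you quote) and that degree constraints then rule out higher differentials --- is a correct elaboration of what the paper leaves implicit.
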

\begin{proof}
This follows from \cite[~Lemma 3.3b]{Kochman_1981}, where Kochman proves that the Bockstein $\beta$ on $\fff_2[\xi_1^2,\xi_2,\ldots][\tilde{v_0}]$ acts as $\beta(\xi_1^2)=0$ and $\beta(\xi_i)=\xi_{i-1}^2$ for $i>1$.
\end{proof}
\vskip1pt
\begin{center}
	\begin{sseqpage}[title={part of the $E_1$-page in low degrees}, axes type = center,  classes = {draw = none },
		x range = {-5}{1}, y range = {0}{2}, x axis origin = {1}, y tick gap = {-0.5 cm}, x label = { s }, y label = { t }, x label style={ yshift = -15pt }, y label style={rotate=270, yshift=490pt, xshift=23pt}, class placement transform={scale=1.55}, xscale=2
		]
		\class["1"](0,0)
		\class["\xi_1^2"](-2,0)
		\class["\xi_2"](-3,0)
		\class["\xi_1^4"](-4,0)
		\class["\xi_1^2\xi_2"](-5,0)
		\class["\tilde{v}_0\xi_1^2"](-1,1)
		\class["\tilde{v}_0\xi_2"](-2,1)
		\class["\tilde{v}_0\xi_1^4"](-3,1)
		\class["\tilde{v}_0\xi_1^2\xi_2"](-4,1)
		\class["\tilde{v}_0^2\xi_1^2"](0,2)
		\class["\tilde{v}_0^2\xi_2"](-1,2)
		\class["\tilde{v}_0^2\xi_1^4"](-2,2)
		\class["\tilde{v}_0^2\xi_1^2\xi_2"](-3,2)
		\d1(-3,0)(-1,1)
		\d1(-2,1)(0,2)
		\d1(-5,0)(-3,1)
		\d1(-4,1)(-2,2)
	\end{sseqpage}
\end{center}
\begin{remark}\label{remark3.8}
Lemma 3.6 allows us to illuminate the structure of the integral dual Steenrod algebra $(\mc{A}_{\zz})_*:=\pi_*(\zz_{(2)}\otimes_{\sss}\zz_{(2)})$, using the fact that there is no room for multiplicative extensions (beyond the trivial $\tilde{v}_0=2$ one) in the Bockstein spectral sequence we have just computed.  First and foremost, we find there is a sub-algebra $R:=\zz_{(2)}[\xi_1^2,\xi_2^2,\ldots]/(2\xi_1^2,2\xi_2^2,\ldots)$, which is almost polynomial on the $\xi_i^2$, except that we enforce they are all $0$ after multiplication by 2.  Since the Bockstein spectral sequence collapses on page 2, given any class $a\in E_1^{-*,0}$ ($*>0$) with $d_1(a)=0$, there must exist some class $b\in E_1^{-*-1,0}$ with $d_1(b)=a\tilde{v}_0$.  Rephrasing this, every class $a\in (\mc{A}_{\zz})_*$ in positive degree arises as $a=d_1(b)/\tilde{v}_0$ for some $b\in \fff_2[\xi_1^2,\xi_2,\xi_3,\ldots]$.  Since $d_1(\xi_j^2b)=\xi_j^2d_1(b)$ for any $b\in\fff_2[\xi_1^2,\xi_2,\xi_3,\ldots]$, we can generate $(\mc{A}_{\zz})_*$ as an $R$-module using the classes $d(\xi_{i_1}\ldots\xi_{i_k})$ for $1<i_1<\ldots <i_k$, as these span $\fff_2[\xi_1^2,\xi_2,\xi_3,\ldots]$ as an $\fff_2[\xi_1^2,\xi_2^2,\xi_3^2,\ldots]$-module.  We will call $d_1(\xi_{i_1}\ldots \xi_{i_k})/\tilde{v_0}$ the ``torsion product'' of $\xi_{i_1-1}^2,\ldots, \xi_{i_k-1}^2$, and sometimes alternatively denote it $\xi_{i_1-1}^2*\ldots*\xi_{i_k-1}^2$.  With this in mind, we remark that $(\mc{A}_{\zz})_*$ maps multiplicatively to $\fff_2[\xi_1^2,\xi_2,\xi_3,\ldots]$ by taking everything modulo 2, and this map is injective in positive degrees, so were it not for the 2-torsion, $(\mc{A}_{\zz})_*$ would be an integral domain.
\end{remark}

Now we can finally compute the homotopy groups (and multiplicative structure on them) of the integral dual Steenrod algebra over $\tmf$.
\begin{proposition}\label{prop3.9}
	The homotopy groups of $\zz_{(2)}\otimes_{\tmf}\zz_{(2)}$ are given by
$$\pi_*(\zz_{(2)}\otimes_{\tmf}\zz_{(2)})\simeq \begin{cases}
	\zz_{(2)} &\qquad \text{if } *=0,22,\\
	\zz_{(2)}\oplus\zz/2\zz &\qquad \text{if } *=9,13,\\
	\zz/2\zz & \qquad \text{if } *=2,4,8,10,11,12,17,19,\\
	\zz/2\zz\oplus\zz/2\zz &\qquad \text{if } *=6,15\\
	0 &\qquad \text{otherwise}.
\end{cases}$$
As a graded ring, we have that $$\pi_*(\zz_{(2)}\otimes_{\tmf}\zz_{(2)})\simeq \zz_{(2)}[x,y,u,v,w]/(x^4,y^2,w^2,2w,2x,2y,u^2,v^2,yv,xu,uw,vw,x^3w-uy,yw-xv),$$ with $|x|=2$, $|y|=6$, $|u|=9$, $|v|=13$, and $|w|=9$
\end{proposition}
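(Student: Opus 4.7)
The plan is to compute $\pi_*(\zz_{(2)}\otimes_{\tmf}\zz_{(2)})$ by running two Bockstein spectral sequences (BSS, Definition \ref{definition2.10}) in succession, bootstrapping off the computation $\pi_*(\fff_2\otimes_{\tmf}\fff_2) \simeq \fff_2[\xi_1,\xi_2,\xi_3]/(\xi_1^8,\xi_2^4,\xi_3^2)$ carried out in Example \ref{example3.4}.

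For the first stage, set $A_1 := \fff_2\otimes_{\tmf}\zz_{(2)}$. Since $A_1$ is an $\fff_2$-module it is already 2-complete, so its BSS takes the form
\[
E_1 = \pi_*(\fff_2\otimes_{\tmf}\fff_2)\otimes_{\fff_2}\fff_2[\tilde{v}_0] \Longrightarrow \pi_*(A_1).
\]
The $d_1$-differential is the integral Bockstein, inherited via the surjection $\mc{A}_* \to \pi_*(\fff_2\otimes_{\tmf}\fff_2)$ from Kochman's formula (compare Lemma \ref{lem3.7}): $\beta(\xi_1)=0$ and $\beta(\xi_i)=\xi_{i-1}^2$ for $i=2,3$, extended as a derivation. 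These formulae descend to the quotient since the Bocksteins of $\xi_1^8$, $\xi_2^4$, $\xi_3^2$ all vanish mod~$2$. I would then determine $E_2$ by a direct kernel/image calculation and rule out higher differentials by a degree-sparsity argument analogous to that in Lemma \ref{lem3.7}, yielding an explicit description of $\pi_*(A_1)$.

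In the second stage, apply the BSS to $A_2 := \zz_{(2)}\otimes_{\tmf}\zz_{(2)}$:
\[
E_1 = \pi_*(A_1)\otimes_{\fff_2}\fff_2[\tilde{v}_0] \Longrightarrow \pi_*((A_2)^{\wedge}_2).
\]
Its $d_1$ is the first Bockstein on $\pi_*(A_1)$, which is computed from stage one and is further constrained by the swap symmetry of $A_2$ in its two tensor factors. To transfer the 2-completed answer back to $\zz_{(2)}$-coefficients, I invoke Proposition \ref{proposition2.11} with $R=\tmf$ and $M=N=\zz_{(2)}$ (both almost perfect over $\tmf$ by \cite[Proposition 7.2.4.17]{HA}, since $\tmf$ is connective with Noetherian $\pi_0$ and finitely generated higher homotopy). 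This forces each $\pi_n(A_2)$ to be finitely generated over $\zz_{(2)}$, hence without divisible summands, so infinite $\tilde{v}_0$-towers in the BSS correspond to genuine $\zz_{(2)}$-free summands of $\pi_*(A_2)$, while finite towers of height $k$ give $\zz/2^k$-summands.

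It remains to identify the named generators with specific surviving classes and verify the relations. The classes $x$ and $y$ come from the stage-one cycles $\xi_1^2$ and $\xi_2^2$, becoming 2-torsion in $\pi_*(A_2)$ via the stage-two Bockstein relations pulled back from $\beta(\xi_2)=\xi_1^2$ and $\beta(\xi_3)=\xi_2^2$; the classes $u, v$ in degrees 9 and 13 correspond to infinite $\tilde v_0$-towers, producing the two free $\zz_{(2)}$-summands; and $w$ in degree 9 arises as a further 2-torsion survivor distinguishable from $u$ by its tower length. The stated relations, notably the hidden multiplicative identities $x^3 w = uy$ and $yw = xv$, follow from multiplicativity of the BSS combined with resolution of hidden extensions, with the swap symmetry constraining the possibilities. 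The principal obstacle will be precisely this resolution of hidden multiplicative extensions in stage two and the verification that no higher differentials occur past $d_1$; the swap involution on $A_2$, degree sparsity, and the finite-generation constraint from Proposition \ref{proposition2.11} should be the main tools for handling these.
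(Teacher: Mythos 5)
Your two-stage Bockstein strategy is structurally the same as the paper's, and your use of Proposition~\ref{proposition2.11} to rule out divisible summands is exactly what the paper does. However, there is a concrete error in the stage-one differential. You take $A_1 = \fff_2\otimes_{\tmf}\zz_{(2)}$ with $E_1 = \fff_2[\xi_1,\xi_2,\xi_3]/(\xi_1^8,\xi_2^4,\xi_3^2)\otimes_{\fff_2}\fff_2[\tilde v_0]$ and assert $d_1(\xi_1)=0$, $d_1(\xi_i)=\xi_{i-1}^2\tilde v_0$ for $i=2,3$, citing Kochman's formula from Lemma~\ref{lem3.7}. But that formula lives on $\pi_*(\zz_{(2)}\otimes_\sss\fff_2)=\fff_2[\xi_1^2,\xi_2,\dots]$, where there is \emph{no} class $\xi_1$ in degree~1, only $\xi_1^2$, and what Kochman proves is $\beta(\xi_1^2)=0$. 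On the full dual Steenrod algebra (or its quotient that is your $E_1$-page), the Bockstein satisfies $\beta(\xi_1)=1$, i.e.\ $d_1(\xi_1)=\tilde v_0$; this is forced exactly as in Example~\ref{ex3.6}, because $A_1$ is an $\fff_2$-module so $\pi_*(A_1)$ is entirely $2$-torsion, and with your $d_1$ the $\tilde v_0$-tower on $\xi_1$ (and on $1$) would survive, producing impossible $\zz_{(2)}$-summands. The paper avoids this trap by running its first Bockstein as $\zz_{(2)}^{bok}\otimes_{\tmf}\fff_2$ and mapping to it from $\zz_{(2)}^{bok}\otimes_\sss\fff_2$; the comparison immediately yields $d_1(\xi_1)=\tilde v_0$ and collapse at $E_2$. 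Only in the \emph{second} Bockstein, $\zz_{(2)}\otimes_{\tmf}\zz_{(2)}^{bok}$ with $E_1=\fff_2[\xi_1^2,\xi_2,\xi_3]/(\dots)[\tilde v_0]$, does Kochman's formula apply, via the comparison map from $\zz_{(2)}\otimes_\sss\zz_{(2)}^{bok}$ (Lemma~\ref{lem3.7}), giving $d_1(\xi_1^2)=0$, $d_1(\xi_2)=\xi_1^2\tilde v_0$, $d_1(\xi_3)=\xi_2^2\tilde v_0$.

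Two smaller points. First, your stage-two $d_1$ description (``computed from stage one and constrained by swap symmetry'') is too vague to be an argument; the paper's comparison with Lemma~\ref{lem3.7} pins it down explicitly. Second, the relations $x^3w=uy$ and $yw=xv$ are \emph{not} hidden extensions resolved afterward: they hold already on the $E_2$-page, as identities among the representing cycles $u=\xi_2\xi_1^6$, $v=\xi_3\xi_2^2$, $w=\xi_3\xi_1^2+\xi_2^3$ modulo the ideal $(\xi_1^8,\xi_2^4,\xi_3^2)$. The paper then observes that, after $E_2$, the only classes that could support differentials are $u,v,uv$ and there are no targets in the right bidegrees, so $E_2=E_\infty$; the only extension problems are the trivial $\tilde v_0=2$ ones. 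So the genuine obstacle is not hidden multiplicativity but simply getting the $d_1$'s on the correct sides.
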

\begin{proof}
We begin as in Example \ref{example3.6} with the Bockstein spectral sequence attached to $\zz_{(2)}^{bok}\otimes_{\tmf}\fff_2$.  This has $E_1$-page $\fff_2[\xi_1,\xi_2,\xi_3]/(\xi_1^8,\xi_2^4,\xi_3^2)[\tilde{v_0}]$, and as in Example \ref{example3.6}, has $d_1(\xi_1)=\tilde{v_0}$ and collapses on page 2.  

Turning attention now to the Bockstein spectral sequence attached to $\zz_{(2)}\otimes_{\tmf}\zz_{(2)}^{bok}$, the $E_1$-page of this spectral sequence has the form $\fff_2[\xi_1^2,\xi_2,\xi_3]/(\xi_1^{8},\xi_2^4,\xi_3^2)[\tilde{v_0}]$.  The map of filtered objects $\zz_{(2)}\otimes_{\sss}\zz_{(2)}^{bok} \to \zz_{(2)}\otimes_{\tmf} \zz_{(2)}^{bok}$ extends to a map on spectral sequences, taking $\xi_i$ to $\xi_i$ and $\tilde{v_0}$ to $\tilde{v_0}$.  Comparing with Lemma \ref{lem3.7} tells us that $d_1(\tilde{v_0})=d_1(\xi_1^2)=0$, $d_1(\xi_2)=\xi_1^2\tilde{v_0}$, $d_1(\xi_3)=\xi_2^2\tilde{v_0}$, and all $d_1$ differentials are generated by multiplicativity from these.

The $E_2$-page is now given by $$E_2=\fff_2[\tilde{v_0},\xi_1^2,\xi_2^2,u,v,w]/(\xi_1^8,\xi_2^4,w^2,\tilde{v_0}\xi_1^2,\tilde{v_0}\xi_2^2,\tilde{v_0}w,u^2,v^2,\xi_2^2v,\xi_1^2u,uw,vw,\xi_1^6w-u\xi_2^2,\xi_2^2w-\xi_1^2v),$$ with $|u|=(-9,0)$ ($u$ represents the class ``$\xi_2\xi_1^6$''), $|v|=(-13,0)$ ($v$ represents the class ``$\xi_3\xi_2^2$''), and $|w|=(-9,0)$ represents the class $\xi_3\xi_1^2+\xi_2^3$.  With these conventions, we can draw out the $E_2$-page.
\begin{center}
	\begin{sseqpage}[title={$E_2$-page of the Bockstein Spectral Sequence}, axes type = center,  classes = {draw = none },
		x range = {-22}{1}, y range = {0}{3}, x axis origin = {1}, y tick gap = {-0.2 cm}, x label = { s }, y label = { t }, x label style={ yshift = -15pt }, y label style={rotate=270, yshift=460pt, xshift=23pt}, class placement transform={scale=2}, xscale=0.67
		]
		\class["1"](0,0)
		\class["\tilde{v}_0"](1,1)
		\class["\xi_1^2"](-2,0)
		\class["\xi_1^4"](-4,0)
		\class["\xi_2^2"](-6,0)
		\class["\xi_1^6"](-6,0)
		\class["u"](-9,0)
		\class["v"](-13,0)
		\class["\xi_2^2u"](-15,0)
		\class["\xi_1^2v"](-15,0)
		\class["\xi_1^4v"](-17,0)
		\class["\xi_1^6v"](-19,0)
		\class["uv"](-22,0)
		\class["\tilde{v}_0u"](-8,1)
		\class["\tilde{v}_0v"](-12,1)
		\class["\tilde{v}_0^2u"](-7,2)
		\class["\tilde{v}_0^2v"](-11,2)
		\class["\tilde{v}_0^3u"](-6,3)
		\class["\tilde{v}_0^3v"](-10,3)
		\class["\tilde{v}_0uv"](-21,1)
		\class["\tilde{v}_0^2uv"](-20,2)
		\class["\tilde{v}_0^3uv"](-19,3)
	\end{sseqpage}
\end{center}  For the rest of the spectral sequence, the only classes that can support a nonzero differential are $u,v$ and $uv$, which would have to hit a class in degrees $(n-8,n)$, $(n-12,n)$ or $(n-21,n)$, respectively, of which there are none.  Thus, the spectral sequence collapses on $E_2$.  Working out the extension problems, $u,v$ and $uv$ give a $\zz_{(2)}$-tower in degrees $9, 13$ and $22$, respectively (using Proposition \ref{proposition2.11}), and the rest of the classes contribute the listed $\zz/2\zz$ summands.  The multiplicative structure follows from multiplicativity of the spectral sequence, and since the only possible extension problems arise from the $\tilde{v_0}$-classes, which account for multiplication by 2.  This gives the desired presentation with $x:=\xi_1^2$, $y:=\xi_2^2$, and $u,v,w$ as defined along the way.
\end{proof}

This allows us to prove a preliminary lemma.
\begin{lemma}\label{lem3.10}
The topological Hochschild homology of $\tmf$ with rational coefficients is given by $$\THH_*(\tmf,\bb{Q})=\begin{cases}
	\bb{Q} \qquad\qquad & \text{ if }*=0,9,13,22\\
	0 \qquad\qquad &\text{ otherwise.}
\end{cases}$$
\end{lemma}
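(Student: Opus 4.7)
The plan is to combine the cyclic invariance property with the triviality of $\THH$ over a field of characteristic zero, thereby reducing the computation to the rationalization of the work already done in Proposition \ref{prop3.9}.

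First I would apply Proposition \ref{proposition2.7} to obtain
$$\THH(\tmf,\bb{Q}) \simeq \THH(\bb{Q},\, \bb{Q}\otimes_{\tmf}\bb{Q}).$$
Next I would record the standard fact that $\bb{Q}\otimes_{\sss}\bb{Q}\simeq \bb{Q}$: since $H\bb{Q}$ is a rational spectrum, smashing over $\sss$ is the same as smashing over its rationalization, and a Kunneth argument gives $\pi_*(\bb{Q}\otimes_{\sss}\bb{Q}) = \pi_*(\bb{Q})\otimes_{\zz}\bb{Q}=\bb{Q}$, concentrated in degree zero. Consequently, for any $\bb{Q}$-bimodule $M$ one has $\THH(\bb{Q},M) \simeq \bb{Q}\otimes_{\bb{Q}} M \simeq M$, so the computation collapses to identifying the homotopy groups of $\bb{Q}\otimes_{\tmf}\bb{Q}$.

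Finally I would rationalize Proposition \ref{prop3.9}. Since $\bb{Q}$ is flat over $\zz_{(2)}$, one has
$$\pi_*(\bb{Q}\otimes_{\tmf}\bb{Q}) \simeq \pi_*(\zz_{(2)}\otimes_{\tmf}\zz_{(2)})\otimes_{\zz_{(2)}}\bb{Q};$$
every $\zz/2\zz$-summand in Proposition \ref{prop3.9} is annihilated upon inverting $2$, while the four $\zz_{(2)}$-summands (in degrees $0,9,13,22$, corresponding to the classes $1,u,v,uv$ that survived the Bockstein spectral sequence) become copies of $\bb{Q}$. This yields exactly the stated answer. No real obstacle is anticipated: the substantive content was assembled in Proposition \ref{prop3.9}, and the passage to rational coefficients is purely formal.
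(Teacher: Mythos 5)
Your proof is correct, but it takes a genuinely different route from the one in the paper, and it is arguably the slicker of the two. The paper's proof notes that $\THH(\tmf,\bb{Q})=\THH(\tmf,\zz_{(2)})[\tfrac{1}{2}]$ and then examines the Brun--Atiyah--Hirzebruch spectral sequence for $\THH(\tmf,\zz_{(2)})$: it observes that, by B\"okstedt's computation of $\THH_*(\zz_{(2)})$ together with Proposition~\ref{prop3.9}, every $E_1$-entry off of the four positions $(0,0),(0,9),(0,13),(0,22)$ is $2$-power torsion; these four $\zz_{(2)}$'s can neither support nor receive nontrivial differentials, and inverting $2$ then kills everything else. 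Your argument instead applies cyclic invariance (Proposition~\ref{proposition2.7}) at the rational level to get $\THH(\tmf,\bb{Q})\simeq\THH(\bb{Q},\bb{Q}\otimes_{\tmf}\bb{Q})$, and then uses the elementary fact $\bb{Q}\otimes_{\sss}\bb{Q}\simeq\bb{Q}$ to collapse $\THH(\bb{Q},M)\simeq M$; this sidesteps the spectral sequence entirely and reduces the lemma to rationalizing Proposition~\ref{prop3.9}. Both routes lean on the same essential input (Proposition~\ref{prop3.9}); what your approach buys is brevity and transparency, while the paper's BAHSS-based argument is stylistically of a piece with the more delicate analysis of $\THH(\tmf,\zz_{(2)})$ that immediately follows in Theorem~\ref{th3.14}. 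One small point worth spelling out: the identification $\pi_*(\bb{Q}\otimes_{\tmf}\bb{Q})\simeq\pi_*(\zz_{(2)}\otimes_{\tmf}\zz_{(2)})\otimes_{\zz_{(2)}}\bb{Q}$ goes through two base changes, $\bb{Q}\otimes_{\tmf}\bb{Q}\simeq\bb{Q}\otimes_{\zz_{(2)}}(\zz_{(2)}\otimes_{\tmf}\zz_{(2)})\otimes_{\zz_{(2)}}\bb{Q}$, and then flatness of $\bb{Q}$ over $\zz_{(2)}$ together with $\bb{Q}\otimes_{\zz_{(2)}}\bb{Q}\simeq\bb{Q}$; your phrasing compresses this, but the conclusion is correct.
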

\begin{proof}
One notes that $\THH(\tmf,\bb{Q})=\THH(\tmf,\zz_{(2)})[\f{1}{2}]$, so it suffices to examine $\THH(\tmf,\zz_{(2)})$ up to torsion.  Setting up the Brun-Atiyah-Hirzebruch spectral sequence for $\THH(\tmf,\zz_{(2)})$, Proposition \ref{prop3.9} shows that the $E_1$-page is given by $\THH_{*}(\zz_{(2)})\oplus\text{ torsion}$ in bidegrees $(-*,0),(-*,9),(-*,13),(-*,22)$, with all other entries $2$-torsion and with only finitely many finitely generated abelian groups along any given $(t-s)$-line.  This implies that the only possible torsion-free summands of homotopy groups of $\THH(\tmf,\zz_{(2)})$ are the copies of $\zz_{(2)}$ in degrees $0,9,13,22$, which persist throguh the Brun-Atiyah-Hirzebruch spectral sequence because every class that could potentially hit them on any given page is $2$-power torsion, and these classes cannot support nonzero differentials.  Localizing at 2 (which kills off the torsion), we get the claim.
\end{proof}
Proving the main theorem of this section will require a careful analysis of the Brun-Atiyah-Hirzebruch spectral sequence for $\THH(\tmf,\zz_{(2)})$.  To get some preliminary information, one can examine a variant of this, as follows.
\begin{example}\label{ex3.11} For $R$ an $\bb{E}_{\infty}$-ring spectrum with a map to $\zz_{(2)}$, we wish to examine the Brun-Atiyah-Hirzebruch spectral sequence computing $\THH(R,\zz_{(2)})$.  The filtered object \\$\THH(R,\tau_{\geq *}(\zz_{(2)}\otimes_{R} \zz_{(2)}))$ admits a map to the filtered object $\THH(\zz_{(2)},\tau_{\geq *}(\zz_{(2)}\otimes_{R}\fff_2))$, whose associated spectral sequence converges to $\THH(R,\fff_2)$, we will call this the mod $2$ Brun-Atiyah-Hirzebruch spectral sequence and denote it $\mo{BAHSS}_{\zz_{(2)}}^{R}/2$.  Using the comparison, by first analyzing $\mo{BAHSS}_{\zz_{(2)}}^{R}/2$, we can gain some insight into behavior of the full BAHSS.  

As usual, we begin with the case $R=\sss$.  From Example \ref{example3.6}, we have that $\pi_*(\zz_{(2)}\otimes_{R}\fff_2)=\fff_2[\xi_1^2,\xi_2,\xi_3,\ldots]$ with $|\xi_1|^2=2$ and $|\xi_i|=2^{i}-1$ for $i>1$.  We write $\THH_*(\zz_{(2)},\fff_{2})=\fff_2[\alpha]\otimes \Lambda_{\fff_2}(\gamma)$ with $|\gamma|=3$ and $|\alpha|=4$.  Since the spectral sequence must converge to $\fff_2$, one finds similar to Example \ref{ex3.1} that we must have $d_2(\gamma)=\xi_1^2$, and $d_{2^{i-1}}(\alpha^{2^{i-2}})=\xi_{i}$ for $i\geq 2$, with the rest of the differentials determined by multiplicativity.  Examining now the case $R=\tmf$, we have that $\pi_*(\zz_{(2)}\otimes_{\tmf}\fff_2)=\fff_2(\xi_1^2,\xi_2,\xi_3)/(\xi_1^8,\xi_2^4,\xi_3^2)$, and once again, for degree reasons, all differentials are determined by $d_2(\gamma)=\xi_1^2$, $d_3(\alpha)=\xi_2$ and $d_7(\alpha^2)=\xi_3$.
\end{example}
We compute one final input before the main theorem.
\begin{example}\label{ex3.12} Consider the BAHSS computing $\THH_*(\sss,\zz_{(2)})\simeq\zz_{(2)}$.  Using Remark \ref{remark3.8}, we get the $E_1$-page of the spectral sequence.  Recall from \cite{bokstedt1993topological} that $$\THH_*(\zz_{(2)})=\begin{cases}
		\zz_{(2)} & \qquad \text{if }*=0\\
		\zz/2^{i}\zz & \qquad \text{if }*=2^{i+1}r-1,\text{ for }r \text{ odd}\\
		0 & \qquad \text{otherwise.}
\end{cases}$$  For $i\geq 1$, we will use $\beta_i$ to denote some chosen generator of $\THH_{4i-1}(\zz_{(2)})$ as a cyclic group.  With notation as before, we will write $\THH_*(\zz_{(2)},\fff_2)=\fff_2[\alpha]\otimes_{\fff_2}\Lambda_{\fff_2}(\gamma)$, and will abuse notation as if these represented actual classes on the $E_1$-page, e.g., writing the nonzero class in $E^{-4,2}_1$ as $\xi_1^2\alpha$ (even though this class is not literally a product of $\xi_1^2$ with any class ``$\alpha$'').  On the other hand, the classes $\xi_1^2\alpha^{i-1}\gamma$ are the product $\xi_1^2\cdot \beta_i$ for $i\geq 1$, so these particular classes do arise as products on the $E_1$-page.

Let's introduce a convenient notation.  By Remark \ref{remark3.8}, any positive degree element $x\in (\mc{A}_{\zz})_*$ can be written in the form ``$x=d_1(y)$'' with $y\in \fff_2[\xi_1^2,\xi_2,\ldots]$, and $d_1$ the $\fff_2$-derivation of this algebra uniquely determined by specifying $d_1(\xi_1^2)=0$, $d_1(\xi_{j})=\xi_{j-1}^2=0$.  Since we are computing spectral sequences here, the notation ``$d_1$'' is rather ambiguous, so we will henceforth use the notation ``$d^S$'' for the derivation we have just defined.

The key technique is to use the comparison with the mod 2 BAHSS computed in the first half of Example \ref{ex3.11}.  This map induces an injection everywhere on the $E_1$-page, except on the line $E_1^{-*,0}$, where it is given by the quotient map $\THH_{-*}(\zz_{(2)})\to \THH_{-*}(\zz_{(2)},\fff_2)$.  In positive $t$-degree on the $E_1$-page, the image is generated as a module over $\fff_2[\xi_1^2,\xi_2^2,\ldots][\alpha,\gamma]$ by torsion products $\xi_{i_1}^2*\ldots \xi_{i_k}^2$ for $k\geq 1$, $1\leq i_1<i_2<\ldots <i_k$, as defined in Remark \ref{remark3.8}.  We will first analyze through the $E_6$-page to illuminate the method, then generalize to describe the rest of the BAHSS.

To begin, this comparison map tells us that there are no $d_1$-differentials, and that the $d_2$-differentials are determined by $d_2(\beta_i)=\alpha^{i-1}\xi_1^2$ for $i\geq 1$, together with multiplicativity (explicitly, given a class $x\in (\mc{A}_{\zz})_*$ in positive degree, we have that $d_2(\alpha^i\gamma x)=\alpha^i\xi_1^2x$.  This kills off every class divisible by $\xi_1^2$, and on $E_3$-pages, we still have an injection $$E_3^{*,t}(\mo{BAHSS}_{\zz_{(2)}})\hookrightarrow E_3^{*,t}(\mo{BAHSS}_{\zz_{(2)}}/2)$$ whenever $t>0$, with image generated as a module over $\fff_2[\xi_2^2,\xi_3^2,\ldots][\alpha]$ by most of the same torsion products as before, $\xi_{i_1}^2*\ldots \xi_{i_k}^2$ for $k\geq 1$, $1\leq i_1<i_2<\ldots <i_k$- where we enforce $\xi_1^2=0$.  Next, on the $E_3$-page, in the mod 2 BAHSS, we had $d_3(\alpha)=\xi_2$.  These differentials lift to everything in positive degree, and we claim that $E_4^{*,t}(\mo{BAHSS}_{\zz_{(2)}}^{\sss})$ injects into $E_4^{*,t}(\mo{BAHSS}_{\zz_{(2)}}^{\sss}/2)$ for all $t>0$, except when $t=6$.  For $t=6$, we have $E_4^{*,6}(\mo{BAHSS}_{\zz}^{\sss}/2)=0$, yet $\alpha^{2i}\xi_2^2\neq 0$ in $E_4^{*,6}(\mo{BAHSS}_{\zz_{(2)}}^{\sss})$ for all $i\geq 0$.  The latter claim holds since the classes $\alpha^{2i-1}\xi_2$ do not exist in our BAHSS to hit these classes as they do modulo 2.

For the former claim, in fact one can say more, that $E_4^{*,t}$ for $t>6$ is generated as an $\fff_2[\xi_3^2,\xi_4^2,\ldots][\alpha^2]$-module by torsion products $\xi_{i_1}^2*\ldots \xi_{i_k}^2$ for $k\geq 1$, with $2\leq i_1<i_2<\ldots <i_k$ (where we enforce ``$\xi_2^2=0$'' when this class arises in expressions).  The terms from the $E_3^{0,*}$-line (with $*>6$) which become boundaries in $E_3(\mo{BAHSS}_{\zz_{(2)}}^{\sss}/2)$ are precisely the terms which modulo 2 are divisible by $\xi_2$.  These are precisely those $x\in (\mc{A}_{\zz})_*$ which can be written as $x=d^S(y)$ for some $y\in \fff_2[\xi_2,\xi_3,\ldots]$, and such that $\xi_2|x$.  Since we have that $\xi_1^2=0$ in $E_3(\mo{BAHSS}_{\zz_{(2)}}^{\sss}/2)$, our derivation $d^S$ satisfies $d^S(\xi_2)=0$ on the $E_3$-page, so that $d^S(\xi_2 y)=\xi_2d^S(y)$ for any $y\in \fff_2[\xi_2,\xi_3,\ldots]$.  We need to see that for each class $x\in E_3^{0,>6}(\mo{BAHSS}_{\zz_{(2)}}^{\sss})$ becoming a boundary in $E_3(\mo{BAHSS}_{\zz_{(2)}}^{\sss}/2)$, the differential hitting $x$ lifts to $E_3(\mo{BAHSS}_{\zz_{(2)}}^{\sss})$.  For this, note that if $\xi_2|x$, then $d^S((x/\xi_2)\xi_2)=\xi_2d^S(x/\xi_2)=0\mod(\xi_1^2)$, which forces $d^S(x/\xi_2)=0$.  On the quotient algebra $\fff_2[\xi_2,\xi_3,\ldots]$, $d^S$ almost surjects onto it's kernel, with $\mo{Ker}(d^S)/\mo{Im}(d^S)$ a 2-dimensional space generated by $1, \xi_2$.  As $x$ was homogeneous of degree greater than $6$, $x/\xi_2$ is homogeneous of degree greater than $3$, so must lie in the image of $d^S$, giving us some $y\in \fff_2[\xi_2,\xi_3,\ldots]$ with $d^S(y)=x/\xi_2$.  But then the class $x^{\prime}=d^S(y)$ lives in $(\mc{A}_{\zz})_*$, and modulo $(\xi_1^2)$, $\xi_2x^{\prime}=x$, such that $\alpha x^{\prime}$ maps to the class of the same name in $E_3(\mo{BAHSS}_{\zz_{(2)}}^{\sss}/2)$ with $d_3(\alpha x^{\prime})=x$, and we must have $d_3(\alpha x^{\prime})=x$ in $E_3(\mo{BAHSS}_{\zz_{(2)}}^{\sss})$ as well.  This shows that given any class in $E_3^{*,>6}(\mo{BAHSS}_{\zz_{(2)}}^{\sss})$ which becomes a boundary in $E_3(\mo{BAHSS}_{\zz_{(2)}}^{\sss})$, we can find a class hitting it under the $d_3$-differential, which shows the claimed injectivity on $E_4$.  Since every nontrivial class in $E_{4}^{*,>6}(\mo{BAHSS}_{\zz_{(2)}}^{\sss})$ maps to a nontrivial class in $E_4(\mo{BAHSS}_{\zz_{(2)}}^{\sss}/2)$, and every class in $E_4^{*,\leq 6}(\mo{BAHSS}_{\zz_{(2)}}^{\sss})$ maps to 0 in the mod 2 BAHSS, there can be no nontrivial differentials from the $E_4^{*,\leq 6}$ classes to a class in $E_4^{*,>6}$ until at least $E_7$.  The $E_3$-page we just described is pictured below (in low degrees).
\begin{center}
	\begin{sseqpage}[title={$E_3(\mo{BAHSS}_{\zz_{(2)}}^{\sss})$}, axes type = center,  classes = {draw = none },
		x range = {-16}{0}, y range = {0}{12}, x axis origin = {1}, y tick gap = {-0.5 cm}, x label = { s }, y label = { t }, x label style={ yshift = -20pt }, y label style={rotate=270, yshift=360pt, xshift=23pt}, class placement transform={scale=2.9}, xscale=0.7, yscale=0.5
		]
		\class["\zz_{(2)}"](0,0)
		\class["\zz/2\zz \beta_2"](-7,0)
		\class["\zz/4\zz\beta_4"](-15,0)
		\class["\xi_2^2"](0,6)
		\class["\alpha\xi_2^2"](-4,6)
		\class["\alpha^2\xi_2^2"](-8,6)
		\class["\alpha^3\xi_2^2"](-12,6)
		\class["\alpha^4\xi_2^2"](-16,6)
		\class["\xi_2^4"](0,12)
		\class["\alpha\xi_2^4"](-4,12)
		\class["\alpha^2\xi_2^4"](-8,12)
		\class["\alpha^3\xi_2^4"](-12,12)
		\class["\alpha^4\xi_2^4"](-16,12)
		\class["\xi_1^2*\xi_2^4"](0,9)
		\class["\alpha\xi_1^2*\xi_2^2"](-4,9)
		\class["\alpha^2\xi_1^2*\xi_2^2"](-8,9)
		\class["\alpha^3\xi_1^2*\xi_2^2"](-12,9)
		\class["\alpha^4\xi_1^2*\xi_2^2"](-16,9)
		\d3(-4,6)(0,9)
		\d3(-12,6)(-8,9)
		\d3(-4,9)(0,12)
		\d3(-12,9)(-8,12)
	\end{sseqpage}
\end{center}

We now describe what happens in the general case.
\end{example}
\begin{theorem}\label{th3.13}
Consider the Brun-Atiyah-Hirzebruch spectral sequence for the sphere with $E_1$-page $E_1^{s,t}(\mo{BAHSS}_{\zz}^{\sss})=\THH_{-s}(\zz_{(2)},\pi_{t}(\zz_{(2)}\otimes_{\sss}\zz_{(2)}))$.  Then,
\begin{enumerate}
\item  There are no $d_r$-differentials unless $r=2^{i}-1$ or $2^{i}-2$ for some $i\geq 2$, and the $d_{2^{i}-2}$-differential is 0 on any element with positive $t$-degree, whenever $i>2$.
\item The portion $E_{2^{i}-1}^{*,>0}(\mo{BAHSS}_{\zz_{(2)}}^{\sss})$ of the $E_{2^{i}-1}$-page injects into $E_{2^{i}-1}^{*,>0}(\mo{BAHSS}_{\zz_{(2)}}^{\sss}/2)$ with image given by the $\fff_2[\xi_i^2,\xi_{i+1}^2,\ldots][\alpha^{2^{i-2}}]$-submodule of the target generated by the torsion products $\xi_{i_1}^2*\ldots \xi_{i_k}^2$ for $k\geq 1$, $i-1\leq i_1<i_2<\ldots <i_k$ (where we enforce ``$\xi_{i-1}^2=0$'' when taking these torsion products).
\item The $d_{2^{i}-1}$-differential of $\mo{BAHSS}_{\zz_{(2)}}^{\sss}$ is completely determined by this injection, and the stipulation that $d_{2^{i}-1}=0$ on the classes with $t$-degree $0$.
\item  The $E_{2^{i}-2}^{*,>0}(\mo{BAHSS}_{\zz_{(2)}}^{\sss})$-portion of the $E_{2^i-2}$-page differs from $E_{2^{i}-1}^{*,>0}(\mo{BAHSS}_{\zz_{(2)}}^{\sss})$ only in the line $t=2^{i}-2$, where $E_{2^{i}-2}^{*,>0}(\mo{BAHSS}_{\zz_{(2)}}^{\sss})$ has non-zero classes given by $\alpha^{k2^{i-1}}\xi_{i-1}^2$ for $k\geq 0$.
\item  The $d_{2^{i}-2}$-differential is determined by $d_{2^{i}-2}(2^{i-2}\beta_{k})=\alpha^{k2^{i-2}}\xi_{i-1}^2$ for $k\geq 0$, with $d_{2^{i}-2}$ vanishing on every other class (for $i\geq 3$).
\end{enumerate}   
\end{theorem}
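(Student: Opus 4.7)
The strategy is to prove (1)--(5) together by strong induction on $i \geq 2$, with Example \ref{ex3.12} serving as the base case through the $d_3$-differential and the beginning of stage $i = 3$. For the inductive step one assumes all five claims at stage $i$ and derives them at stage $i + 1$. The argument rests on three tools: the comparison map $\mo{BAHSS}_{\zz_{(2)}}^{\sss} \to \mo{BAHSS}_{\zz_{(2)}}^{\sss}/2$ from Example \ref{ex3.11}, whose differentials are determined completely by $d_{2^j - 1}(\alpha^{2^{j-2}}) = \xi_j$ and the Leibniz rule; the derivation $d^S$ on the integral dual Steenrod algebra from Remark \ref{remark3.8}, which controls when mod-2 boundaries lift to integral ones; and convergence of the spectral sequence to $\zz_{(2)}$.

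Claims (2) and (3) are handled by lifting the mod-2 differential $d_{2^{i+1} - 1}(\alpha^{2^{i-1}}) = \xi_{i+1}$ through the injection supplied by the induction hypothesis and propagating it multiplicatively. Verification that the resulting $E_{2^{i+1}}^{\ast, > 0}$-page has the $\fff_2[\xi_{i+1}^2, \xi_{i+2}^2, \ldots][\alpha^{2^{i-1}}]$-module structure predicted in (2) is the direct generalization of the $t > 6$ analysis in Example \ref{ex3.12}: one shows that the derivation $d^S$, now restricted to $\fff_2[\xi_{i+1}, \xi_{i+2}, \ldots]$ with $\xi_i^2$ set to zero, surjects onto its kernel in total degree above $|\xi_{i+1}|$, so every mod-2 boundary in that range lifts to an integral boundary. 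Combined with the mod-2 injection, this simultaneously supplies the part of (1) that rules out differentials of lengths other than $2^j - 1$ or $2^j - 2$ acting in positive $t$-degree.

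Claims (4), (5), and the remaining part of (1) concern the $d_{2^{i+1} - 2}$-differential acting on B\"okstedt classes. The extra classes on the line $t = 2^{i+1} - 2$ described by (4) lie outside the image of the mod-2 injection: they are boundaries in $\mo{BAHSS}_{\zz_{(2)}}^{\sss}/2$ via sources of the form $\alpha^{?}\xi_{i+1}$, whose $\xi_{i+1}$-factor has no integral counterpart after the $d^S$-reductions inherited from earlier stages. They therefore cannot be killed by any positive-$t$-degree differential, and the only candidates of the correct bidegree are multiples of B\"okstedt classes $\beta_j$ at $t = 0$. Convergence to $\zz_{(2)}$ forces such a differential, and the precise coefficient is pinned down by matching 2-orders: the target is 2-torsion, whereas $\beta_j$ has order $2^{v_2(j) + 1}$ by B\"okstedt's theorem, so only its 2-torsion subgroup $2^{v_2(j)} \beta_j \cdot \zz_{(2)}$ can produce the differential, identifying the coefficient in (5) at the relevant $j$.

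The main obstacle is the convergence bookkeeping in this final step. One must verify that the cumulative action of $d_{2^{j+1} - 2}$- and $d_{2^{j+1} - 1}$-differentials across all stages $j \leq i + 1$ exhausts every B\"okstedt tower $\zz/2^{v_2(j)+1}$ at precisely the right page, so that nothing survives to $E_\infty$ in positive degree and no B\"okstedt class is killed prematurely. This delicate matching between B\"okstedt's formula $\THH_{4j - 1}(\zz_{(2)}) \simeq \zz/2^{v_2(j) + 1}$ and the stratification of 2-torsion classes surviving on successive $t = 2^{k+1} - 2$ lines is where the combinatorial heart of the argument lies.
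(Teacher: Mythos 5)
Your proposal follows the same overall strategy as the paper: induct on $i$ with Example~\ref{ex3.12} as the base, control the positive-$t$-degree part of the integral BAHSS via the injection into $\mo{BAHSS}_{\zz_{(2)}}^{\sss}/2$ together with the derivation $d^S$ from Remark~\ref{remark3.8}, and use convergence to $\zz_{(2)}$ to force the $d_{2^i-2}$-differentials off the $t=0$ line. The treatment of claims (1)--(4) is essentially the paper's argument, and the observation that the ``extra'' classes on the line $t=2^i-2$ lie outside the mod-$2$ injection and hence can only be killed from the $t=0$ line is correct.

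However, the ``$2$-order matching'' argument you give for claim (5) is wrong, and it is not a cosmetic issue. You claim that since the target $\alpha^{k-2^{i-2}}\xi_{i-1}^2$ is $2$-torsion and $\beta_k$ has order $2^{v_2(k)+1}$, only the $2$-torsion subgroup $2^{v_2(k)}\beta_k \cdot \zz_{(2)}$ of $\langle\beta_k\rangle$ can produce the differential. This is backwards: a nonzero homomorphism $\zz/2^{m}\zz \to \zz/2\zz$ is the quotient map, and its kernel contains the $2$-torsion subgroup $\langle 2^{m-1}\rangle$ whenever $m\geq 2$, so the $2$-torsion is precisely what \emph{cannot} hit the target. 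Moreover, the coefficient $2^{i-2}$ appearing in (5) depends on the page $i$ and not on $v_2(k)$; the same class $\beta_k$ supports nonzero $d_{2^{i}-2}$-differentials for every $i$ in the range $2\le i\le v_2(k)+2$, and at page $E_{2^i-2}$ the surviving portion of $\langle\beta_k\rangle$ is $\langle 2^{i-2}\beta_k\rangle$, which agrees with the $2$-torsion subgroup $\langle 2^{v_2(k)}\beta_k\rangle$ only at the terminal stage $i=v_2(k)+2$. The correct way to identify the coefficient, as in the paper, is to track what actually survives on the $t=0$ line page by page: the $d_2, d_6, \ldots, d_{2^{i-1}-2}$ differentials successively reduce $\langle\beta_k\rangle = \zz/2^{v_2(k)+1}\zz$ to $\langle 2^{i-2}\beta_k\rangle$, and convergence plus the degree count (the source of $d_{2^i-2}$ targeting $\alpha^{j}\xi_{i-1}^2$ must sit in total degree $4(j+2^{i-2})-1$) forces $d_{2^i-2}$ to be a nonzero map out of this surviving subgroup. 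Your concluding paragraph gestures at exactly this bookkeeping but does not carry it out, so as written the derivation of (5) --- and with it the full strength of (1) and (4) --- remains unjustified.
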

\begin{proof}
By induction, assume we know that for some $i\geq 2$, the map $$E_{2^{i}}^{*,t}(\mo{BAHSS}_{\zz_{(2)}}^{\sss})\to E_{2^{i}}^{*,t}(\mo{BAHSS}_{\zz_{(2)}}^{\sss}/2)$$ is the zero map for $t\leq 2^{i+1}-2$, and is an injection for $t>2^{i+1}-2$, with image generated as an $\fff_2[\xi_{i+1}^2, \xi_{i+2}^2,\ldots][\alpha^{2^{i-1}}]$-module by torsion products (mod $\xi_1^2,\xi_2,\ldots,\xi_i$) $\xi_{i_1}^2*\ldots \xi_{i_k}^2$ for $k\geq 1$, $i\leq i_1<i_2<\ldots <i_k$.  Then since $E_{k}(\mo{BAHSS}_{\zz_{(2)}}^{\sss}/2)$ supports no nontrivial differentials for $2^i\leq k<2^{i+1}-1$, the same claim holds on the $E_{2^{i+1}-1}$-page.   

Now, suppose that we have a class $x\in E_{2^{i+1}-1}^{*,>2^{i+1}-2}(\mo{BAHSS}_{\zz_{(2)}}^{\sss})$ which becomes a boundary in $\mo{BAHSS}_{\zz_{(2)}}^{\sss}/2$.  This forces us to have $x=\alpha^{k2^{i}}\cdot \xi_{i+1}\cdot x^{\prime}\mod(\xi_1^2,\ldots, \xi_{i})$ with $k\geq 0$.  We can pull out the $\alpha^{k2^i}$ to assume that $x\in E_{2^{i+1}-1}^{0,*}$ is a class arising from $(\mc{A}_{\zz})_*$, which modulo $2,\xi_1^2,\ldots,\xi_{i}$, is divisible by $\xi_{i+1}$.  Write $x=d^S(y)$ for some $y\in \fff_2[\xi_{i+1},\xi_{i+2},\ldots]$, where as $d^S(\xi_{i+1})=0 \mod (\xi_1^2,\ldots,\xi_{i})$, we have $0=d^S(\xi_{i+1}\cdot x/\xi_{i+1})=\xi_{i+1}d^S(x/\xi_{i+1})=0\mod (\xi_1^2,\ldots,\xi_{i})$.  Once again, we have that $d^S(x/\xi_{i+1})=0\mod (\xi_1^2,\ldots,\xi_{i})$, and if the $t$-degree of $x$ is greater than $2^{i+2}-2$, then $x/\xi_{i+1}$ is in the image of $d^S$.\footnote{The kernel of $d^S$ modulo its image is generated by $1,\xi_{i+1}$, which live in $t$-degree less than or equal to $2^{i+2}-2$.}  In this case, $d^S(x/\xi_{i+1})=0$, so $x/\xi_{i+1}$ lifts to a class $x^{\prime}\in(\mc{A}_{\zz})_*$, and we must have that $d_{2^{i+1}-1}(\alpha^{2^{i}}x^{\prime})=x$, giving a lift of the differential which targeted $x$ in $\mo{BAHSS}_{\zz_{(2)}}^{\sss}/2$ to $\mo{BAHSS}_{\zz_{(2)}}^{\sss}$.  Therefore, for any $x\in E_{2^{i+1}-1}^{*,>2^{i+1}-2}(\mo{BAHSS}_{\zz_{(2)}}^{\sss})$ becoming a boundary in $\mo{BAHSS}_{\zz_{(2)}}^{\sss}/2$, we can lift the differential to $\mo{BAHSS}_{\zz_{(2)}}^{\sss}$, except of course if $x$ lives above $\alpha^{2^{i+1}}\xi_{i+1}^{2}$, in which case there is no class that could target $x$.  This shows that the map $$E_{2^{i+1}}^{*,t}(\mo{BAHSS}_{\zz_{(2)}}^{\sss})\to E_{2^{i+1}}^{*,t}(\mo{BAHSS}_{\zz_{(2)}}^{\sss}/2)$$ is zero for $t\leq 2^{i+2}-2$, and is injective for $t>2^{i+2}-2$, with image generated by all of the classes generating the image on the previous page which were cycles, which can be identified to be the $\fff_2[\xi_{i+2},\xi_{i+3},\ldots][\alpha^{2^{i+1}}]$-submodule generated by torsion products $\xi_{i_1}*\xi_{i_2}\ldots *\xi_{i_k}$ with $k\geq 1$, $i+1\leq i_1<i_2<\ldots <i_k$, proving the inductive step.

Unpacking the step just computed, the only point of failure for injectivity of $$E_{2^{i+1}}^{*,t}(\mo{BAHSS}_{\zz_{(2)}}^{\sss})\to E_{2^{i+1}}^{*,t}(\mo{BAHSS}_{\zz_{(2)}}^{\sss}/2)$$ above the line $t=0$ are the classes $\alpha^{k2^{i+1}}\xi_{i+1}^2$, since $\alpha^{k2^{i+1}-2^i}\xi_{i+1}$ does not lift to $E_{2^{i+1}-1}(\mo{BAHSS}_{\zz_{(2)}}^{\sss})$.  All of the leftover classes we have, both the $E_n^{*,0}$-line and all of the $\alpha^{k2^{i}}\xi_i^2$ terms leftover by these constructions map to $0$ in the mod 2 BAHSS, and thus cannot support non-trivial differentials to anything in the range where we map injectively into $E_{n}^{*,*}(\mo{BAHSS}_{\zz_{(2)}}^{\sss}/2)$.  Thus, for degree reasons, they must only support differentials between themselves.  Since all of the leftover $\alpha^{k2^i}\xi_i^2$ terms have even total degree,\footnote{That is, $t$-degree minus $s$-degree.} they cannot hit eachother under differentials, and hence cannot support any nontrivial differentials.  Since the spectral sequence converges to $\zz_{(2)}$, this leaves only one possibility: these classes are killed off as the targets of differentials starting at the 0-line.  For degree reasons, the only way this can happen is if, on $E_{2^{i}-2}$, $d_{2^{i}-2}(2^{i-2}\beta_{k})=\alpha^{k-2^{i-2}}\xi_{i-1}^2$ for $k\geq 0$ with $2^{i-2}|k$ ($i\geq 2$).  This is the statement (5), which combined with our previous observations proves (2) and (4).  Statement (3) follows immediately from (2), and the first statement follows by degree reasons, statement (2) and comparison map from $\mo{BAHSS}_{\zz_{(2)}}^{\sss}$ to $\mo{BAHSS}_{\zz_{(2)}}^{\sss}/2$.
\end{proof}
\begin{center}
	\begin{sseqpage}[title={$E_6(\mo{BAHSS}_{\zz_{(2)}}^{\sss})$ in low degrees}, axes type = center,  classes = {draw = none },
		x range = {-16}{0}, y range = {0}{14}, x axis origin = {1}, y tick gap = {-0.5 cm}, x label = { s }, y label = { t }, x label style={ yshift = -20pt }, y label style={rotate=270, yshift=360pt, xshift=23pt}, class placement transform={scale=2.9}, xscale=0.7, yscale=0.5
		]
		\class["\zz_{(2)}"](0,0)
		\class["\zz/2\zz \beta_2"](-7,0)
		\class["\zz/4\zz\beta_4"](-15,0)
		\class["\xi_2^2"](0,6)
		\class["\alpha^2\xi_2^2"](-8,6)
		\class["\alpha^4\xi_2^2"](-16,6)
		\class["\xi_3^2"](0,14)
		\class["\alpha^2\xi_3^2"](-8,14)
		\class["\alpha^4\xi_3^2"](-16,14)
		\d6(-7,0)(0,6)
		\d6(-15,0)(-8,6)
	\end{sseqpage}
\end{center}
\begin{theorem}[Theorem \ref{th1.1}]\label{th3.14}
	$$\THH_*(\tmf,\zz_{(2)})=\begin{cases}
		\zz_{(2)} & \qquad \text{if } *=0,9,13,22\\
		\zz/2^k\zz & \qquad \text{if } *=2^{k+3}r-1, 2^{k+3}r-1+9, 2^{k+3}r-1+13, 2^{k+3}r-1+22,\\
		0 & \qquad \text{otherwise},
	\end{cases}$$
for all $k>0$ and $r$ odd.
\end{theorem}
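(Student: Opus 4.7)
The plan is to analyze the Brun--Atiyah--Hirzebruch spectral sequence
\[
E_1^{s,t}=\THH_{-s}(\zz_{(2)},\pi_t(\zz_{(2)}\otimes_{\tmf}\zz_{(2)}))\Rightarrow \THH_{t-s}(\tmf,\zz_{(2)}),
\]
combining Proposition \ref{prop3.9} for the coefficients with B\"{o}kstedt's computation of $\THH_*(\zz_{(2)})$ recalled in Example \ref{ex3.12}. The $E_1$-page decomposes into four ``non-torsion columns'' arising from the $\zz_{(2)}$-generators $1,u,v,uv$ in degrees $0,9,13,22$, together with a $2$-torsion part generated by $x,y,w$ and their products. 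Lemma \ref{lem3.10} forces the $\zz_{(2)}$-classes at $E_1^{0,d}$ for $d\in\{0,9,13,22\}$ to be permanent cycles, accounting for the four $\zz_{(2)}$-summands in the theorem statement.

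I would next apply the comparison with the sphere BAHSS of Theorem \ref{th3.13}, induced by $\sss\to\tmf$. The coefficient map sends $\xi_1^2\mapsto x$, $\xi_2^2\mapsto y$, and the torsion product $\xi_1^2*\xi_2^2\mapsto w$ (matching the identification of $w$ with $\xi_1^2\xi_3+\xi_2^3$ from Proposition \ref{prop3.9}), while $\xi_i^2\mapsto 0$ for $i\geq 3$. Pulling back the differentials of Theorem \ref{th3.13}, the $\tmf$ BAHSS inherits $d_2(\beta_k)=\alpha^{k-1}x$ and $d_6(2\beta_k)=\alpha^{k-2}y$, while the higher sphere differentials $d_{2^i-2}(2^{i-2}\beta_k)$ for $i\geq 4$ have zero target in $\tmf$-coefficients. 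By multiplicativity (Lemma \ref{lemma2.6}) and the product relations $xu=uw=vw=yv=0$, $x^3w=uy$, $yw=xv$ of Proposition \ref{prop3.9}, the Leibniz rule propagates these to kill every class $\alpha^a\beta_b\cdot x^c$ with $c\geq 1$; classes like $\beta_k u$ survive $d_2$ (since $xu=0$), while $2\beta_k u$ is then killed by $d_6$ (since $yu=x^3w\neq 0$), and similar bookkeeping applies to the $v$- and $uv$-columns.

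The main obstacle is pinning down the ``new'' differentials with no sphere analogue, in particular those required to kill the higher $2$-power multiples $2^{i-2}\beta_k$ for $i\geq 4$, whose sphere-level targets $\xi_{i-1}^2$ vanish in $\tmf$-coefficients. To determine these I would combine three inputs: (i) the rational constraint of Lemma \ref{lem3.10} forbidding any permanent cycles beyond the four $\zz_{(2)}$-summands; (ii) a comparison with the mod-$2$ answer $\THH_*(\tmf,\fff_2)$ from Example \ref{example3.4}, via the universal coefficient sequence associated to $\zz_{(2)}\xrightarrow{2}\zz_{(2)}\to\fff_2$, which constrains the total $\fff_2$-dimension of $\THH_*(\tmf,\zz_{(2)})$ in each degree and hence the torsion orders; and (iii) the multiplicative structure forcing the differentials on $\alpha^a\beta_b\cdot u$, $\alpha^a\beta_b\cdot v$, $\alpha^a\beta_b\cdot uv$ to be controlled by those on $\alpha^a\beta_b$ via the relations in Proposition \ref{prop3.9}. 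The absence of the higher sphere differentials, together with the requirement that their replacements occur at later pages and target classes in the $u$-, $v$-, or $uv$-columns, is what should produce the shift from $2^{k+1}r-1$ (sphere torsion orders) to $2^{k+3}r-1$ (the $\tmf$ torsion orders).

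Finally, I would verify the absence of additive or multiplicative extensions, which should be automatic once the theorem's torsion pattern is established since nonzero groups occur only at isolated degrees $2^{k+3}r-1+d$ with zero neighbors, leaving no room for nontrivial extensions. Proposition \ref{proposition2.11} (guaranteeing finite generation over $\pi_0(\tmf)=\zz_{(2)}$) together with multiplicativity of the spectral sequence closes out the computation.
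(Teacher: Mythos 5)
Your proposal captures roughly half of the paper's argument but misses its key structural tool. The paper does not try to run the full Brun--Atiyah--Hirzebruch spectral sequence for $\THH(\tmf,\zz_{(2)})$ to completion. Instead it first runs the \emph{Bockstein spectral sequence} attached to $\THH(\tmf,\zz_{(2)}^{bok})$, whose $E_1$-page is $\fff_2[\beta,\tilde v_0]\otimes\Lambda(\lambda_1,\lambda_2,\lambda_3)$ from Example~\ref{example3.4}. Since the $\lambda_i$ cannot support differentials for degree reasons and Lemma~\ref{lem3.10} forbids any further torsion-free summands, multiplicativity forces $d_{n_k}(\beta^{2^k})=\beta^{2^k-1}\lambda_3\tilde v_0^{n_k}$ for some strictly increasing sequence $n_k$, so the integral answer is pinned down up to determining $n_k$. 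Only then is the BAHSS invoked, and only in the single family of degrees $2^{k+3}-1$ (the $r=1$ case), where the comparison with Theorem~\ref{th3.13} shows $d_6(2\beta_{2k+2})=\alpha^{2k}y$; together with the strict monotonicity coming from the Bockstein analysis, this forces the entire sequence $n_k$.

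Your plan replaces the Bockstein spectral sequence with ``a universal coefficient sequence associated to $\zz_{(2)}\xrightarrow{2}\zz_{(2)}\to\fff_2$.'' This is a genuine gap: the UCT constrains $\fff_2$-dimensions of $\THH_*(\tmf,\zz_{(2)})/2$ and $\THH_*(\tmf,\zz_{(2)})[2]$, but it cannot distinguish $\zz/2^k\zz$ from $\zz/2^j\zz$ for $j\neq k$ --- precisely the data the theorem asserts. What actually does the work is the \emph{multiplicative} Bockstein spectral sequence, where $d(\beta^{2^k})$ propagating via the Leibniz rule gives the monotone $n_k$. Your remaining inputs (i)--(iii) are correct as far as they go --- the rational constraint from Lemma~\ref{lem3.10} is used by the paper, the identification of the coefficient map $\xi_1^2\mapsto x$, $\xi_2^2\mapsto y$, $\xi_1^2*\xi_2^2\mapsto w$ is right, and the observation that the targets $\xi_{i-1}^2$ of the sphere's $d_{2^i-2}$ vanish for $i\ge 4$ in $\tmf$-coefficients is correct --- but your concluding sentence (``the absence of the higher sphere differentials\dots is what should produce the shift from $2^{k+1}r-1$ to $2^{k+3}r-1$'') is a heuristic, not a proof. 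To complete the argument along your lines you would either have to compute the entire tmf BAHSS in all degrees (a substantially harder bookkeeping problem than the paper faces), or introduce the Bockstein spectral sequence as the paper does and only use the BAHSS to anchor a single torsion order.

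You should also be careful with the statement ``pulling back the differentials of Theorem~\ref{th3.13}, the $\tmf$ BAHSS inherits $d_6(2\beta_k)=\alpha^{k-2}y$.'' The map of spectral sequences intertwines differentials, but a differential in the source spectral sequence need not survive to a given page of the target if some earlier, $\tmf$-specific differential has already killed the relevant class. The paper addresses exactly this: it first verifies that no class in $t$-degrees $1$--$5$ survives to $E_6$ in the $\tmf$ BAHSS, and only then concludes that $\alpha^{2k}y$ must be hit by $d_6(2\beta_{2k+2})$. Without that check the ``inheritance'' claim is unjustified.
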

\begin{proof}
We begin with the Bockstein spectral sequence arising from $\THH(\tmf,\zz_{(2)}^{bok})$, which by Example \ref{example3.4} has $E_1$-page $\fff_2[\beta,\tilde{v_0}]\otimes_{\fff_2}\Lambda(\lambda_1,\lambda_2,\lambda_3)$, with $|\beta|=(-16,0)$, $\lambda_1=(-9,0)$, $|\lambda_2|=(-13,0)$, $|\lambda_3|=(-15,0)$ and $|\tilde{v_0}|=(1,1)$.  None of the $\lambda_i$ can support any nontrivial differentials for degree reasons, so either the spectral sequence has degenerated, or else $d_n(\beta)\neq 0$ for some $n\geq 1$.  If the spectral sequence degenerated already, then $\THH_i(\tmf,\zz_{(2)})$ would have a torsion-free summand for infinitely many $i$.  However, by Lemma \ref{lem3.10}, this cannot be the case, so we must have that $d_n(\beta)=\lambda_3\tilde{v_0}^n$ for some $n$.  By similar reasoning, each power $\beta^{2^k}$ must support some differential, so we get some strictly increasing sequence $\{n_k\}_{k\geq 0}$ such that $d_{n_{k}}(\beta^{2^{k}})=\beta^{2^{k}-1}\lambda_3\tilde{v_0}^{n_k}$, and together with multiplicativity, these generate all of the differentials in this spectral sequence.  Our job now is to determine exactly what this sequence $n_r$ is.  This discussion shows that $\THH_i(\tmf,\zz_{(2)})$ will be $\zz_{(2)}$ for $i=0,9,13,22$, and will be of the form $\zz/2^{n_k}\zz$ if $i=2^{k+4}r-1+j$ for $r$ odd, $k\geq 0$ and $j\in\{0,9,13,22\}$.  This corresponds to the $1,\lambda_1,\lambda_2,\lambda_1\lambda_2$-multiplies of $\beta^{2^kr-1}\lambda_3$.  In particular, aside from the four exceptional cases, $\THH_i(\tmf,\zz_{(2)})$ is only nonzero for $i\equiv 4,8,12,15 \mod 16$.
\begin{center}
	\begin{sseqpage}[title={$E_1$-page in low degrees}, axes type = center,  classes = {draw = none },
		x range = {-16}{2}, y range = {0}{2}, x axis origin = {1}, y tick gap = {-0.2 cm}, x label = { s }, y label = { t }, x label style={ yshift = -15pt }, y label style={rotate=270, yshift=320pt, xshift=23pt}, class placement transform={scale=2.9}, xscale=0.7, yscale=1
		]
		\class["1"](0,0)
		\class["\tilde{v}_0"](1,1)
		\class["\tilde{v}_0^2"](2,2)
		\class["\lambda_1"](-9,0)
		\class["\tilde{v}_0\lambda_1"](-8,1)
		\class["\tilde{v}_0^2\lambda_1"](-7,2)
		\class["\lambda_2"](-13,0)
		\class["\tilde{v}_0\lambda_2"](-12,1)
		\class["\tilde{v}_0^2\lambda_2"](-11,2)
		\class["\lambda_3"](-15,0)
		\class["\tilde{v}_0\lambda_3"](-14,1)
		\class["\tilde{v}_0^2\lambda_3"](-13,2)
		\class["\beta"](-16,0)
		\class["\tilde{v}_0\beta"](-15,1)
		\class["\tilde{v}_0^2\beta"](-14,2)
	\end{sseqpage}
\end{center}
\indent We now turn our attention to the Brun-Atiyah-Hirzebruch spectral sequence and use our computation Proposition \ref{prop3.9} more substantially.  We will not compute the BAHSS in its entirety, instead we will focus our efforts on computing $\THH_{2^{i}-1}(\tmf,\zz_{(2)})$ for $i\geq 4$.  The $E_1$-page is given by $$E_1^{s,t}(\mo{BAHSS}_{\zz_{(2)}}^{\tmf})=\THH_{-s}(\zz_{(2)},\pi_{t}(\zz_{(2)}\otimes_{\tmf}\zz_{(2)})).$$  We will use notation as before for classes in $\THH_*(\zz_{(2)})$ and $\THH_*(\zz_{(2)},\fff_2)$, with the same abuse of notation as in Example \ref{ex3.12}.

The classes on the $E_1$-page which contribute to the total degree $2^{i}-1$ term are (for $i\geq 4$)\footnote{If a class has a negative power of $\alpha$, we set it to be 0 as a convention.} $\beta_{2^{i-2}}$, $\alpha^{2^{i-2}-2}\gamma x^2$, $ \alpha^{2^{i-2}-3}\gamma xy$, $ \alpha^{2^{i-2}-3}xw$, $ \alpha^{2^{i-2}-4}\gamma x^3y$, $ \alpha^{2^{i-2}-4} x^3w$,$ \alpha^{2^{i-2}-4} xv$, and finally $\alpha^{2^{i-2}-5}x^3v$, with all of these classes generating a copy of $\zz/2\zz$, except for $\beta_{2^{i-2}}$ which generates a copy of $\zz/2^{i-1}$.  The BAHSS has no nontrivial $d_1$-differentials for degree reasons.  From the comparison map $E_i(\mo{BAHSS}_{\zz_{(2)}}^{\sss})\to E_i(\mo{BAHSS}_{\zz_{(2)}}^{\tmf})$, we find that $d_2(\beta_k)=\alpha^{k-1}x$, which together with multiplicativity, determines most of the $d_2$-differentials.   By multiplicativity, all of the classes from the $E_1$-page with total degree $2^{i}-1$ either support a nontrivial differential, or are hit by a nontrivial differential, with the sole exception being the class $\alpha^{2^{i-2}-4}\gamma x^3y$ on account of the fact that we have $d_2(\alpha^{2^{i-2}-4}\gamma x^3y)=\alpha^{2^{i-2}-4}x^4y=0$, from the relation $x^4=0$.

Next, consider the induced map to the mod 2 BAHSS considered in Example \ref{ex3.11}.  The class $\alpha^{2^{i-2}-3}\gamma u$ (which survives to $E_3$ since $ux=0$) maps to the class $\alpha^{2^{i-2}-3}\gamma \xi_2\xi_1^6$ on the $E_3$-page.  By Example \ref{ex3.11}, $d_3(\alpha^{2^{i-2}-3}\gamma \xi_2\xi_1^6)=\alpha^{2^{i-3}-4}\gamma\xi_2^2\xi_1^6$, so we must have a class in the BAHSS we are analyzing lifting $\alpha^{2^{i-3}-4}\gamma\xi_2^2\xi_1^6$ which is hit by $\alpha^{2^{i-2}-3}\gamma u$ on the $E_3$-page, and the only possible such class is $\alpha^{2^{i-2}-4}\gamma x^3y$, which therefore must vanish on $E_4$.

Finally, since every class with $t$-degree between $1$ and $5$ vanished on $E_3$, there is no class leftover to hit $\alpha^{2k}y$ for $k\geq 0$ before the $E_6$-page, so comparison with the case of the sphere tells us that $d_6(2\beta_{2k+2})=\alpha^{2k}y$ for all $k\geq 0$, leaving us with a $\zz/2^{k}\zz$ as the only group in total degree $2^{k+3}-1$ on the $E_7$-page, telling us that $n_k$ is at most $k$.  Since our analysis of the Bockstein spectral sequence tells us that $n_k$ is strictly increasing, and $n_1\geq 1$, this forces $n_k=k$, proving the claim.
\end{proof}

\newpage
\section{THH of Quotients of $\ell$}

\indent\indent In this section, we compute the topological Hochschild homology of quotients $\ell/v_1^n$ of $\ell$, in terms of the topological Hochschild homology of $\ell$, which was computed by Angeltveit-Hill-Lawson in \cite{angeltveit2009topological}.  We will show that $\pi_*\THH(\ell/v_1^n)\simeq \pi_*\THH(\ell,\ell/v_1^n)\otimes_{\zz_{(p)}}\Gamma[x]$, the tensor product of the topological Hochschild homology of $\ell$ with coefficients in $\ell/v_1^n$ with a divided power algebra on a generator $x$ in degree $2n(p-1)+2$.  We start by computing $\THH(\ell/v_1^n,\zz_{(p)})$.
\begin{lemma}\label{lem4.1}
	For $n>1$, we have that $$\zz_{(p)}\otimes_{\ell/v_1^n}\zz_{(p)}\simeq (\zz_{(p)}\otimes_{\ell}\zz_{(p)})\otimes_{\zz_{(p)}}(\zz_{(p)}\otimes_{\zz_{(p)}\otimes_{\ell}\ell/v_1^n}\zz_{(p)})$$ as $\bb{E}_\infty$-algebras.
\end{lemma}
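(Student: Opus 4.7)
The plan is to massage the right-hand side using base change and associativity of the tensor product in $\mo{CAlg}(\mo{Sp})$. Set $D := \zz_{(p)}\otimes_\ell \ell/v_1^n$. Since the identity map $\zz_{(p)}\to\zz_{(p)}$ and the canonical augmentation $\ell/v_1^n\to\zz_{(p)}$ agree upon restriction to $\ell$, the universal property of the pushout defining $D$ supplies a canonical augmentation $\varepsilon\colon D\to\zz_{(p)}$ of $\bb{E}_\infty$-$\zz_{(p)}$-algebras.

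The key observation is that the pushout square
$$\begin{tikzcd} \ell \ar[r] \ar[d] & \ell/v_1^n \ar[d] \\ \zz_{(p)} \ar[r] & D \end{tikzcd}$$
yields a base-change equivalence $\zz_{(p)}\otimes_\ell \zz_{(p)}\simeq D\otimes_{\ell/v_1^n}\zz_{(p)}$ of $\bb{E}_\infty$-$\zz_{(p)}$-algebras. Substituting this into the right-hand side of the target identity, then applying associativity together with $\zz_{(p)}\otimes_{\zz_{(p)}}(-)\simeq\mathrm{id}$, the plan is to compute
$$(\zz_{(p)}\otimes_\ell\zz_{(p)})\otimes_{\zz_{(p)}}(\zz_{(p)}\otimes_D\zz_{(p)})\simeq (D\otimes_{\ell/v_1^n}\zz_{(p)})\otimes_D\zz_{(p)}\simeq\zz_{(p)}\otimes_{\ell/v_1^n}\zz_{(p)},$$
where the final equivalence is base change along $\varepsilon$: the $D$-action on $D\otimes_{\ell/v_1^n}\zz_{(p)}$ via its first factor collapses that copy of $D$ to $\zz_{(p)}$ through $\varepsilon$.

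Every equivalence above lives in $\mo{CAlg}(\mo{Sp})$ because every tensor product in the chain is computed there, so the result is an equivalence of $\bb{E}_\infty$-algebras. The main ``obstacle'' is purely bookkeeping: tracking which factor of each tensor product carries which $D$- or $\zz_{(p)}$-module structure so that the associativity and base-change steps apply the correct pushout identities. The argument is entirely formal and would produce the analogous equivalence for any factorization $A\to B\to C$ of $\bb{E}_\infty$-ring spectra; the hypothesis $n>1$ is not needed for the equivalence itself (for $n=1$ one has $\ell/v_1=\zz_{(p)}$ and both sides collapse to $\zz_{(p)}$), so it presumably reflects how the decomposition is used downstream rather than an obstruction in the proof.
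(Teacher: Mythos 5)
Your chain of equivalences has a real gap at the first step, and it is not a bookkeeping issue. After the base change $\zz_{(p)}\otimes_\ell\zz_{(p)}\simeq D\otimes_{\ell/v_1^n}\zz_{(p)}$, you write
$$(D\otimes_{\ell/v_1^n}\zz_{(p)})\otimes_{\zz_{(p)}}(\zz_{(p)}\otimes_D\zz_{(p)})\simeq (D\otimes_{\ell/v_1^n}\zz_{(p)})\otimes_D\zz_{(p)}$$
and attribute it to associativity and $\zz_{(p)}\otimes_{\zz_{(p)}}(-)\simeq\mathrm{id}$. Associativity does produce an expression of the form $(D\otimes_{\ell/v_1^n}\zz_{(p)})\otimes_D\zz_{(p)}$, but the $D$-algebra structure on $D\otimes_{\ell/v_1^n}\zz_{(p)}$ that comes out of that step is the one obtained by restricting the $\zz_{(p)}$-algebra structure along $\varepsilon\colon D\to\zz_{(p)}$; that is, $D\to\zz_{(p)}\to D\otimes_{\ell/v_1^n}\zz_{(p)}$. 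Your final cancellation step, by contrast, explicitly uses the $D$-algebra structure given by the first-factor inclusion $D\to D\otimes_{\ell/v_1^n}\zz_{(p)}$. These are two genuinely different $\bb{E}_\infty$-ring maps $D\rightrightarrows \zz_{(p)}\otimes_\ell\zz_{(p)}$, and the argument as written silently conflates them.

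Closing this gap is exactly the content of the paper's proof. The paper observes that the structure map $D\to\zz_{(p)}\otimes_\ell\zz_{(p)}$ factors through $\varepsilon$ because the target is $(2p-1)$-truncated while $\tau_{\leq 2p-1}D\simeq\zz_{(p)}$ once $n>1$ (since $\pi_*D$ is concentrated in degrees $0$ and $2n(p-1)+1 > 2p-1$). That truncation argument is where $n>1$ enters, and it is not optional. Your closing remark that ``the hypothesis $n>1$ is not needed for the equivalence itself'' and that both sides ``collapse to $\zz_{(p)}$'' at $n=1$ is false: for $n=1$ one has $D = \zz_{(p)}\otimes_\ell\zz_{(p)}$, so the left side of the lemma is $\zz_{(p)}$ but the right side contains the factor $\zz_{(p)}\otimes_D\zz_{(p)}=\zz_{(p)}\otimes_{\zz_{(p)}\otimes_\ell\zz_{(p)}}\zz_{(p)}$, which has nonzero homotopy in arbitrarily high degrees. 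For the same reason the claimed generality (``analogous equivalence for any factorization $A\to B\to C$'') fails; taking $B=C$ already gives a counterexample. The lemma is not a formal base-change statement: it uses the specific fact that $D$ is ``too highly connected in positive degrees'' to map nontrivially into $\zz_{(p)}\otimes_\ell\zz_{(p)}$.
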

\begin{proof}
We have a diagram of $\bb{E}_{\infty}$-rings where all squares are pushouts:
\begin{center}
	\begin{tikzcd}
	\ell\rar\dar & \ell/v_1^n\rar\dar & \zz_{(p)}\dar\\
	\zz_{(p)}\rar & \zz_{(p)}\otimes_{\ell}\ell/v_1^n\rar\dar & \zz_{(p)}\otimes_{\ell}\zz_{(p)}\dar\\
	& \zz_{(p)}\rar & \zz_{(p)}\otimes_{\ell/v_1^n}\zz_{(p)}.
	\end{tikzcd}
\end{center}
The map $\zz_{(p)}\otimes_{\ell}\ell/v_1^n\to \zz_{(p)}\otimes_{\ell}\zz_{(p)}$ factors over $\tau_{\leq 2(p-1)+1}\left(\zz_{(p)}\otimes_{\ell}\ell/v_1^n\right)\simeq \zz_{(p)}$, so that the cospan $\zz_{(p)}\leftarrow \zz_{(p)}\otimes_{\ell}\ell/v_1^n\rightarrow \zz_{(p)}\otimes_{\ell}\zz_{(p)}$ may be rewritten as the tensor product over $\zz_{(p)}$ of the cospans $\zz_{(p)}\leftarrow \zz_{(p)}\otimes_{\ell}\ell/v_1^n\rightarrow \zz_{(p)}$ and $\zz_{(p)}\leftarrow \zz_{(p)}\rightarrow \zz_{(p)}\otimes_{\ell}\zz_{(p)}$.  Since colimits commute, we find that the pushout of our original square, $\zz_{(p)}\otimes_{\ell/v_1^n}\zz_{(p)}$, is isomorphic to the pushout of our first cospan tensored over $\zz_{(p)}$ with the pushout of our second cospan, which is precisely $(\zz_{(p)}\otimes_{\zz_{(p)}\otimes_{\ell}\ell/v_1^n}\zz_{(p)})\otimes_{\zz_{(p)}}(\zz_{(p)}\otimes_{\ell}\zz_{(p)})$, as claimed.
\end{proof}
Next, we examine $\zz_{(p)}\otimes_{\zz_{(p)}\otimes_{\ell}\ell/v_1^n}\zz_{(p)}$.  By an easy computation, its homotopy groups are 
\begin{center}
	\begin{equation*}
	\pi_*(\zz_{(p)}\otimes_{\ell}\ell/v_1^n)=\begin{cases}
		\zz_{(p)} & \text{if } * = 0, 2n(p-1)+1,\\
		0 & \text{otherwise.}
	\end{cases}
	\end{equation*}
\end{center}
By \cite[~Proposition 2.1]{Dundas_2018}, this $\bb{E}_{\infty}-\zz_{(p)}$-algebra is a trivial square-zero extension of $\zz_{(p)}$.  A standard then argument shows that there is an identification of graded commutative rings $\pi_*(\zz_{(p)}\otimes_{\zz_{(p)}\otimes_{\ell}\ell/v_1^n}\zz_{(p)})=\Gamma[x]$ with $x$ in degree $2n(p-1)+2$, as desired.

\begin{lemma}\label{lem4.2}
	$\THH_*(\ell/v_1^n,\zz_{(p)})\simeq \THH_*(\ell,\zz_{(p)})\otimes_{\zz_{(p)}}\pi_*(\zz_{(p)}\otimes_{\zz_{(p)}\otimes_{\ell}\ell/v_1^n}\zz_{(p)})$.
\end{lemma}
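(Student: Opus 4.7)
The plan is to iterate cyclic invariance together with Lemma \ref{lem4.1}, reducing the equivalence to a Künneth-style computation over $\zz_{(p)}$. First, apply Proposition \ref{proposition2.7} to the $\bb{E}_\infty$-ring map $\ell/v_1^n\to\zz_{(p)}$ to rewrite
\[
\THH(\ell/v_1^n,\zz_{(p)})\simeq \THH\big(\zz_{(p)},\,\zz_{(p)}\otimes_{\ell/v_1^n}\zz_{(p)}\big).
\]
Then use Lemma \ref{lem4.1} to replace the coefficients with the tensor product $(\zz_{(p)}\otimes_{\ell}\zz_{(p)})\otimes_{\zz_{(p)}}(\zz_{(p)}\otimes_{\zz_{(p)}\otimes_{\ell}\ell/v_1^n}\zz_{(p)})$ of two $\bb{E}_\infty$-$\zz_{(p)}$-algebras.

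The key step is the observation that for any $\bb{E}_\infty$-$R$-algebra $M$, the canonical $R$-bimodule structure on $M$ is symmetric, and in that case $\THH(R,M)\simeq M\otimes_{R}\THH(R)$ (one sees this by rewriting $M\otimes_{R\otimes R^{op}}R$ using that the $R\otimes R$-module structure on $M$ factors through the multiplication $R\otimes R\to R$). Specializing to $M=A\otimes_{R}B$ with $A,B$ both $\bb{E}_\infty$-$R$-algebras yields
\[
\THH(R,A\otimes_{R}B)\simeq (A\otimes_{R}B)\otimes_{R}\THH(R)\simeq \THH(R,A)\otimes_{R}B.
\]
Applying this with $R=\zz_{(p)}$, $A=\zz_{(p)}\otimes_\ell\zz_{(p)}$, and $B=\zz_{(p)}\otimes_{\zz_{(p)}\otimes_{\ell}\ell/v_1^n}\zz_{(p)}$, and then applying Proposition \ref{proposition2.7} once more (this time to $\ell\to\zz_{(p)}$, which identifies $\THH(\zz_{(p)},\zz_{(p)}\otimes_\ell\zz_{(p)})\simeq \THH(\ell,\zz_{(p)})$), produces the equivalence of spectra
\[
\THH(\ell/v_1^n,\zz_{(p)})\simeq \THH(\ell,\zz_{(p)})\otimes_{\zz_{(p)}}\bigl(\zz_{(p)}\otimes_{\zz_{(p)}\otimes_{\ell}\ell/v_1^n}\zz_{(p)}\bigr).
\]

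Finally, pass to homotopy groups. By the discussion following Lemma \ref{lem4.1}, $\pi_*\bigl(\zz_{(p)}\otimes_{\zz_{(p)}\otimes_{\ell}\ell/v_1^n}\zz_{(p)}\bigr)\cong \Gamma[x]$ is a free (hence flat) graded $\zz_{(p)}$-module, so the Künneth spectral sequence for $\THH(\ell,\zz_{(p)})\otimes_{\zz_{(p)}}B$ collapses with no $\Tor$ contributions, producing the claimed isomorphism of graded $\zz_{(p)}$-modules.

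The main obstacle I anticipate is justifying the splitting $\THH(\zz_{(p)},A\otimes_{\zz_{(p)}}B)\simeq \THH(\zz_{(p)},A)\otimes_{\zz_{(p)}}B$ at the level of spectra rather than merely as homotopy groups: one must track the bimodule structure carefully and use that for an $\bb{E}_\infty$-$\zz_{(p)}$-algebra the two $\zz_{(p)}$-actions coincide, so that the $\zz_{(p)}\otimes_{\sss}\zz_{(p)}$-module structure factors through multiplication. Once this symmetry is in hand, the rest of the argument is a formal manipulation of pushouts and base change.
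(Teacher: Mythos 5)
Your proposal is correct and follows essentially the same route as the paper: apply cyclic invariance to reduce to $\THH(\zz_{(p)},\zz_{(p)}\otimes_{\ell/v_1^n}\zz_{(p)})$, substitute the Lemma~\ref{lem4.1} decomposition, pull the free factor $\zz_{(p)}\otimes_{\zz_{(p)}\otimes_\ell\ell/v_1^n}\zz_{(p)}$ outside the bar construction using that all modules in sight are symmetric $\zz_{(p)}$-bimodules, and apply cyclic invariance once more to recognize $\THH(\ell,\zz_{(p)})$. The paper phrases the pull-out step as ``rearranging the order of the tensor products'' after unwinding $\THH(R,-)=R\otimes_{R\otimes_\sss R^{op}}(-)$, which is the same symmetric-bimodule observation you make explicit; your added remark that the Künneth spectral sequence collapses by flatness of $\Gamma[x]$ over $\zz_{(p)}$ is correct and fills in a step the paper leaves implicit.
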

\begin{proof}
By cyclic invariance, we have
$$\THH(\ell/v_1^n,\zz_{(p)})\simeq \THH(\zz_{(p)},\zz_{(p)}\otimes_{\ell/v_1^n}\zz_{(p)}).$$
Expanding this out and applying Lemma \ref{lem4.1}, we get
\begin{align*}
\THH(\zz_{(p)},\zz_{(p)}\otimes_{\ell/v_1^n}\zz_{(p)})&\simeq \zz_{(p)}\otimes_{\zz_{(p)}\otimes_{\sss}\zz_{(p)}}(\zz_{(p)}\otimes_{\ell/v_1^n}\zz_{(p)})\\
&\simeq \zz_{(p)}\otimes_{\zz_{(p)}\otimes_{\sss}\zz_{(p)}}((\zz_{(p)}\otimes_{\ell}\zz_{(p)})\otimes_{\zz_{(p)}}(\zz_{(p)}\otimes_{\zz_{(p)}\otimes_{\ell}\ell/v_1^n}\zz_{(p)})).
\end{align*}
Finally, by rearranging the order of the tensor products, one concludes
\begin{align*}
\THH(\ell/v_1^n,\zz_{(p)})&\simeq \zz_{(p)}\otimes_{\zz_{(p)}\otimes_{\sss}\zz_{(p)}}((\zz_{(p)}\otimes_{\ell}\zz_{(p)})\otimes_{\zz_{(p)}}(\zz_{(p)}\otimes_{\zz_{(p)}\otimes_{\ell}\ell/v_1^n}\zz_{(p)}))\\
&\simeq (\zz_{(p)}\otimes_{\zz_{(p)}\otimes_{\sss}\zz_{(p)}}(\zz_{(p)}\otimes_{\ell}\zz_{(p)}))\otimes_{\zz_{(p)}}(\zz_{(p)}\otimes_{\zz_{(p)}\otimes_{\ell}\ell/v_1^n}\zz_{(p)})\\
&\simeq \THH(\zz_{(p)},\zz_{(p)}\otimes_{\ell}\zz_{(p)})\otimes_{\zz_{(p)}}(\zz_{(p)}\otimes_{\zz_{(p)}\otimes_{\ell}\ell/v_1^n}\zz_{(p)})\\
&\simeq \THH(\ell,\zz_{(p)})\otimes_{\zz_{(p)}}(\zz_{(p)}\otimes_{\zz_{(p)}\otimes_{\ell}\ell/v_1^n}\zz_{(p)}).
\end{align*} 
\end{proof}
Now, we are finally in a position to analyze the Whitehead spectral sequence for $\THH(\ell/v_1^n)$.  To start, note that $$E_1^{s,t}=\zz_{(p)}[v_1]/v_1^n\otimes_{\zz_{(p)}}\THH_{*}(\ell,\zz_{(p)})\otimes_{\zz_{(p)}}\Gamma[x],$$ with $v_1$ in bidegree $(0,2(p-1))$, $x$ in bidegree $(-(2n(p-1)+2),0)$, and $\THH_{s}(\ell,\zz_{(p)})$ living in degree $(-s,0)$.  There is a comparison map $\rho$ from the Brun-Atiyah-Hirzebruch spectral sequence for $\THH(\ell,\ell/v_1^n)$ to this one, which determines many of the differentials.  In fact,
\begin{theorem}\label{th4.3}
All of the differentials vanish on the classes coming from $\Gamma[x]$ in the $\THH(\ell/v_1^n)$ spectral sequence.  In particular, the comparison map $\rho$, together with the Leibniz rule, determine all of the differentials in the Whitehead spectral sequence, yielding $$\THH_*(\ell/v_1^n)\simeq \THH_*(\ell,\ell/v_1^n)\otimes_{\zz_{(p)}}\Gamma[x],$$ for some class $x$ in degree $2n(p-1)+2$.
\end{theorem}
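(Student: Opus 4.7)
The plan is to combine a permanence argument for the divided-power classes with the comparison map $\rho$ and the multiplicative structure on the WSS. The $E_1$-page is, by Lemma \ref{lem4.2} and $\pi_*(\ell/v_1^n) = \zz_{(p)}[v_1]/v_1^n$,
$$E_1^{*,*} = \THH_*(\ell,\zz_{(p)}) \otimes_{\zz_{(p)}} \Gamma[x] \otimes_{\zz_{(p)}} \zz_{(p)}[v_1]/v_1^n,$$
with the $\Gamma[x]$-factor sitting in the $t=0$ row and the $v_1$-factor encoding the remaining rows.

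The central step is to show that each $x^{(k)}$ is a permanent cycle. Since $d_r$ has bigrading $(r+1, r)$ with $r \geq 1$, classes on the $t=0$ row can never be boundaries, so only $d_r(x^{(k)}) \neq 0$ must be ruled out. For this I would produce explicit lifts of the $x^{(k)} \in \pi_*\THH(\ell/v_1^n, \zz_{(p)})$ to classes in $\pi_*\THH(\ell/v_1^n)$ along the ring map $\THH(\ell/v_1^n) \to \THH(\ell/v_1^n, \zz_{(p)})$. By cyclic invariance (Proposition \ref{proposition2.7}), $\THH(\ell/v_1^n, \zz_{(p)}) \simeq \THH(\zz_{(p)}, \zz_{(p)} \otimes_{\ell/v_1^n} \zz_{(p)})$, and the $\Gamma[x]$-factor enters via the tensor decomposition from Lemma \ref{lem4.1}; tracing the generator $x$ through this equivalence together with the multiplicative structure from Lemma \ref{lemma2.6} (which applies since $\ell/v_1^n$ is $\bb{E}_\infty$) produces all $x^{(k)}$ as genuine homotopy classes in $\THH(\ell/v_1^n)$. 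Any $E_1^{-s,0}$-class detected by such a lift cannot support a differential.

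With the $\Gamma[x]$-classes now known to be permanent cycles, I would apply the comparison map $\rho$ from the BAHSS for $\THH(\ell, \ell/v_1^n)$ to the WSS, coming from the multiplication map $\ell/v_1^n \otimes_\ell \ell/v_1^n \to \ell/v_1^n$ on bimodule coefficients. On $E_1$-pages, $\rho$ is the surjective quotient killing the exterior generator $a \in \pi_{2n(p-1)+1}(\ell/v_1^n \otimes_\ell \ell/v_1^n)$ and the identity on the common subalgebra $\THH_*(\ell,\zz_{(p)}) \otimes \Gamma[x] \otimes \zz_{(p)}[v_1]/v_1^n$. Naturality of $\rho$, together with multiplicativity of the WSS, forces the WSS differentials on $A := \THH_*(\ell,\zz_{(p)}) \otimes \zz_{(p)}[v_1]/v_1^n$ to equal the $a$-free parts of the corresponding BAHSS differentials. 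Combined with $d_r(x^{(k)}) = 0$ and the Leibniz rule, this fixes every differential in the WSS.

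To deduce the stated isomorphism, I would track the effect on the $E_\infty$-page. The $\Gamma[x]$-classes split off on each page as a free tensor factor, so $E_\infty(\mathrm{WSS}) \simeq (E_\infty \text{ of the } A \otimes \zz_{(p)}[v_1]/v_1^n \text{-part}) \otimes_{\zz_{(p)}} \Gamma[x]$. The first factor is computed by exactly the $a$-free part of the BAHSS differentials, and a cardinality comparison against the BAHSS abutment $\THH_*(\ell, \ell/v_1^n)$, accounting for the $\Lambda[a]$-cancellation inherent in BAHSS, identifies this first factor with $\THH_*(\ell, \ell/v_1^n)$. Finally, $\zz_{(p)}$-flatness of $\Gamma[x]$ resolves the remaining extension problems to yield $\THH_*(\ell/v_1^n) \simeq \THH_*(\ell, \ell/v_1^n) \otimes_{\zz_{(p)}} \Gamma[x]$. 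The main obstacle is the lifting argument in the second step: explicitly producing the $x^{(k)}$ in $\pi_*\THH(\ell/v_1^n)$ and verifying they are detected by the $E_1^{*,0}$-classes of the WSS.
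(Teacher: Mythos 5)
Your proposal identifies the correct central claim — that the divided power classes $x^{(k)}$ must be shown to be permanent cycles — but the step you flag as the main obstacle is indeed a genuine gap, and the method you sketch for closing it does not work.

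You propose to "produce explicit lifts of $x^{(k)}$ to classes in $\pi_*\THH(\ell/v_1^n)$" by tracing the generator through the cyclic invariance equivalence and Lemma \ref{lem4.1}, combined with multiplicativity from Lemma \ref{lemma2.6}. But cyclic invariance and Lemma \ref{lem4.1} only compute $\THH(\ell/v_1^n,\zz_{(p)})$; they say nothing about whether any of these classes are in the image of $\pi_*\THH(\ell/v_1^n)\to \pi_*\THH(\ell/v_1^n,\zz_{(p)})$. That image is precisely what the Whitehead spectral sequence is trying to determine, so invoking the decomposition here is circular. Moreover, even if one could lift the generator $x$ itself, the divided powers $x^{(k)}$ for $k\geq 2$ are \emph{not} products of lower-degree classes — they satisfy $x^k = k!\,x^{(k)}$, and $k!$ is not a $p$-adic unit in general — so the multiplicative structure provided by Lemma \ref{lemma2.6} does not propagate a lift of $x$ to a lift of $x^{(k)}$. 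The paper resolves exactly this difficulty by switching to the May-type spectral sequence: using Lemma \ref{lem4.4} (an $\bb{E}_2$-level decomposition $\THH(R)\simeq\THH(k)\otimes_k\mathrm{HH}(R/k)$ for truncated polynomial $R$, proved via a thick $\otimes$-ideal argument on graded spectra), the $E_1$-page becomes $\THH_*(\zz_{(p)})\otimes\mathrm{HH}_*((\zz_{(p)}[\tilde{v}_1]/\tilde{v}_1^n)/\zz_{(p)})$, and a careful degree-by-degree bound ($t\leq -n(p-1)s + 2(n-1)(p-1)$ for $s$ even, etc.) shows the classes $(\sigma^2\tilde{v}_1^n)^{(k)}$ cannot support any nonzero $d_r$. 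Only after this May-SS analysis is it legitimate to conclude that the corresponding classes in the WSS are permanent cycles, and then to run the comparison with $\rho$ and Leibniz as you describe. Without Lemma \ref{lem4.4} or an equivalent mechanism, your argument does not close; the degree reasoning alone on the WSS $E_1$-page is insufficient, which is also why the paper remarks that the direct WSS approach only works when $n\not\equiv 1\bmod p^2$.
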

\begin{proof}
	In the case $n\neq 1\mod p^2$, the theorem can be proven by looking only at the spectral sequences we have already constructed.
	
To prove this theorem in general, we use the May-type spectral sequence, combined with the following lemma, whose proof is adapted from \cite[~Lemma 4.1]{lee2023topological}:
\begin{lemma}\label{lem4.4}
	Suppose $k$ is a discrete ring, and $R$ is a connective (possibly graded) $\bb{E}_{2}$-$k$-algebra with $\pi_*(R)=k[x]/x^n$, on some class $x$ in positive even degree, and $R$ admits an $\bb{E}_2$-algebra map from a ring $S$ with $\pi_*(S)=k[x]$.  Then, we have an equivalence of (graded) $\bb{E}_1$-$k$-algebras $\THH(R)=\THH(k)\otimes_k \mo{HH}(R/k)$.
\end{lemma}
\begin{proof}[Proof of lemma]
We have $k[x]=k\otimes_{\sss}\sss[x]$.  Now, as an $\bb{E}_1$-algebra $$R=k[x]\otimes_{k[x^n]}k\simeq (k\otimes_{\sss}\sss[x])\otimes_{k\otimes_{\sss}\sss[x^n]}(k\otimes_{\sss}\sss)\simeq k\otimes_{\sss}\sss[x]/x^n,$$ where $\sss[x]/x^n$ denotes $\sss[x]\otimes_{\sss[x^n]}\sss$.  Since $\THH$ commutes with tensor products, there are equivalences of (graded) spectra, $$\THH(R)\simeq \THH(k)\otimes_{\sss}\THH(\sss[x]/x^n)\simeq \THH(k)\otimes_k k\otimes_{\sss}\THH(\sss[x]/x^n)\simeq \THH(k)\otimes_k \mo{HH}((k[x]/x^n)/k).$$  Since $k[x]/x^n=\tau_{\leq n|x|-1}k[x]$, $R$ inherits a canonical $\bb{E}_2$-$k$-algebra structure as this truncation.  We can give $x$ a new (positive) grading 1, to make $\sss[x]$ into a nonnegatively graded $\bb{E}_2$-ring spectrum, that is to say an $\bb{E}_2$-algebra object in $\mo{Sp}^{\zz^{ds}_{\geq 0}}$.  There is a thick $\otimes$-ideal $\mc{I}$ of $\mo{Sp}^{\zz^{ds}_{\geq 0}}$ generated by elements concentrated in grading $\geq n$.  Quotienting out this $\otimes$-ideal gives a symmetric monoidal functor $\mo{Sp}^{\zz^{ds}_{\geq 0}}\to \mo{Sp}^{\zz_{\geq 0}^{ds}}$$/\mc{I}$, whose right adjoint is then lax symmetric monoidal by \cite[~Corollary 7.3.2.7]{HA}.  Composing these two functors gives a functor which sends our graded $\sss[x]$ to a graded $\bb{E}_2$-algebra with underlying $\bb{E}_2$-ring $\sss[x]/x^n$, as desired.  Using this grading, we can get, from the $\bb{E}_2$-map in $\mo{Sp}^{\zz^{ds}_{\geq 0}}$, a map $\sss[x]\to k[x]$.  Applying the endofunctor we just described, we get an $\bb{E}_2$-algebra map $\sss[x]/x^n\to k[x]/x^n$, which upgrades our isomorphism above to an $\bb{E}_2$-algebra isomorphism, ensuring that the induced map on $\THH$ is an isomorphism of $\bb{E}_1$-algebras.
\end{proof}
We wish to apply this in our case.  Choosing $m\in\zz_{\geq 0}$ sufficiently large ($m>2n(p-1)+2$), we get a $t$-structure $\tau^m$ on graded spectra such that $\tau^m_{\geq 0}(\pi_*(\ell/v_1^n))=\zz_{(p)}$ concentrated in degree 0, which shows that $\pi_*(\ell/v_1^n)$ is a graded $\bb{E}_{\infty}$-$\zz_{(p)}$-algebra.  Now, we can apply the above theorem to write the $E_1$-page of the May-type spectral sequence as $$E_1^{s,t}=\THH_*(\zz_{(p)})\otimes_{\zz_{(p)}}\mo{HH}_*((\zz_{(p)}[\tilde{v_1}]/\tilde{v_1}^n)/\zz_{(p)}).$$  A standard calculation\footnote{For instance, one can use that $\mo{HH}((\zz_{(p)}[x]/x^n)/\zz_{(p)})\simeq \mo{HH}(\zz_{(p)}[x]/\zz_{(p)})\otimes_{\mo{HH}(\zz_{(p)}[x^n]/\zz_{(p)})}\zz_{(p)}$.} shows that $$\mo{HH}_*((\zz_{(p)}[\tilde{v_1}]/\tilde{v_1}^n)/\zz_{(p)})\simeq \Lambda_{\zz_{(p)}}[\sigma\tilde{v_1}]\otimes_{\zz_{(p)}} \Gamma_{\zz_{(p)}}[\sigma^2\tilde{v_1}^n]\otimes_{\zz_{(p)}} \zz_{(p)}[\tilde{v_1}]/(\tilde{v_1}^n),$$ where $\sigma^2\tilde{v_1}^n$ is a class in bidegree $(-2,2n(p-1))$, and $\sigma v_1$ is a class in bidegree $(-1,2(p-1))$.

The terms coming from $\THH_n(\zz_{(p)})$ live in bidegree $(-n,0)$, and $\tilde{v_1}$ lives in bidegree $(0,2(p-1))$.  One notes that $\sigma^2(\tilde{v_1}^n)$ is the only class in total degree $(-2,2n(p-1))$ and nothing lives in degree $(0,2n(p-1)+1)$.  Thus, $\sigma^2(\tilde{v_1}^n)$ must vanish under the differentials on every page, and this is the same class corresponding to $x$ in the Brun-Atiyah-Hirzebruch spectral sequence.

Next, note that for all classes $a$, with bigrading $|a|=(s,t)$, we have that \\$t\leq -n(p-1)s+2(n-1)(p-1)$ for $s$ even, and $t\leq -n(p-1)(s+1)+2n(p-1)$ for $s$ odd.  Furthermore, $t$ is maximized with respect to $s$ for $s\leq 0$ even by $\tilde{v_1}^{n-1}x^{(-\f{s}{2})}$, and for $s$ odd by $\sigma\tilde{v_1}\tilde{v_1}^{n-1}x^{(-\f{s+1}{2})}$.  In particular, any differential off of $x^{(k)}$ on the $E_r$-page would have to hit a class in bidegree $(-2k+r+1, 2kn(p-1)+r)$.  However, for $r$ odd, $-2k+r+1\geq -2k+2$, and thus every class with $t> 2n(p-1)(k-1)+2(n-1)(p-1)$ vanishes in this $s$ degree.  Therefore, the target of $d_r$ on $x^{(k)}$ is 0.  If $r$ is even, then $-2k+r+1\geq -2k+3$, and thus if $t>2n(p-1)(k-2)+2n(p-1)=2n(p-1)(k-1)$, the classes in degree $(-2k+r+1,t)$ vanish, and again, the target of $d_r((\sigma^2 (\tilde{v_1}^n))^{(k)})$ vanishes.  We have shown that the $(\sigma^2 (\tilde{v_1})^n)^{(k)}$, are all permanent cycles, and then so are the $x^{(k)}$ from the Brun-Atiyah-Hirzebruch spectral sequence.

Note that there are no other nonzero terms in the May-type spectral sequence with total degree $k(2n(p-1)+2)$, and higher filtration degree than $(\sigma^2 (\tilde{v_1}^n))^{(k)}$.  Thus, there can be no nontrivial multiplicative extensions supported on these classes, and $x\mapsto \sigma^2 (v_1^n)$ determines a map of graded commutative $\zz_{(p)}$-algebras $\Gamma_{\zz_{(p)}}[x]\to \THH_*(\ell/v_1^n)$, with $x$ a class in degree $2n(p-1)+2$.  This gives us a map of graded commutative $\zz_{(p)}$-algebras 
$$\THH_*(\ell,\ell/v_1^n)\otimes_{\zz_{(p)}}\Gamma_{\zz_{(p)}}[x]\to \THH_*(\ell/v_1^n).$$
Since the $E_1$-page of the Whitehead spectral sequence for $\THH(\ell/v_1^n)$ is multiplicatively generated by the image of $\THH_*(\ell,\pi_*(\ell/v_1^n))$ under $\rho$ and the classes $x^{(k)}$ (which have just been shown to be permanent cycles), all of the nontrivial differentials appearing in this Whitehead spectral sequence arise from the map $\THH_*(\ell,\tau_{\geq *}\ell/v_1^n)\to\THH_*(\ell/v_1^n,\tau_{\geq *}\ell/v_1^n)$ together with the Leibniz rule.  In particular, it follows that the algebra map $\THH_*(\ell,\ell/v_1^n)\otimes_{\zz_{(p)}}\Gamma_{\zz_{(p)}}[x]\to \THH_*(\ell/v_1^n)$ must be an isomorphism, as claimed.
\end{proof}
\begin{remark}
The notations $\sigma x$ and $\sigma^2 x$ for certain classes $x$ were used above, which deserves an explanation.  Let $\mc{C}$ be a presentably symmetric monoidal stable category, and let $R$ be an $\bb{E}_1$-algebra object in $\mc{C}$.  In \cite[~\textsection A]{hahn2022redshift}, a map is constructed $$\Sigma\cofib(1_{\mc{C}}\to R)\to\THH_{\mc{C}}(R),$$ where we denote for the moment $\THH_{\mc{C}}(R)$ the topological Hochschild homology internal to the category $\mc{C}$.  For a class $w\in \pi_*(R)$, we denote by $\sigma w$ the image of this class under the composite $\Sigma R\to \Sigma\cofib(1_{\mc{C}}\to R)\to \THH_{\mc{C}}(R)$, $\sigma w\in \pi_{*+1}(\THH_{\mc{C}}(R))$.  If $x\in \pi_{*}(1_{\mc{C}})$ has a lift to a class $\tilde{x}\in\pi_{*+1}(\cofib(1_{\mc{C}}\to R))$, then we write $\sigma^2x$ for the image of $\tilde{x}$ under $\Sigma\cofib(1_{\mc{C}}\to R)\to\THH_{\mc{C}}(R)$, which lives in $\pi_{*+2}(\THH_{\mc{C}}(R))$.  Although the class denoted $\sigma^2\tilde{v_1}^{n}$ above does not actually arise in this way, it is lifted from a class in $\THH_{\zz_{(p)}[\tilde{v_1}]-Mod}(\zz_{(p)}[\tilde{v_1}]/(\tilde{v_1}^n))$, which is constructed in this fashion, so we abusively denote the class $\sigma^2\tilde{v_1}^n$.  These naming conventions will make a reprise in the sequel.
\end{remark}
\newpage 
\section{The General Case}
We remark that many of the constructions in the last section admit a generalization.
\begin{theorem}\label{th5.1}
 	Suppose that $R$ is a connective $\bb{E}_{m}$-ring spectrum for some $m\geq 4$, and $x\in \pi_{*}(R)$ is a positive degree class such that, for some fixed $k>1$, there is an $\bb{E}_3$-$R$-algebra $S$, such that the unique algebra map $R\to S$ fits into a fiber sequence of $R$-modules $$\Sigma^{k|x|}R\xrightarrow{x^k}R\to S.$$   Then, there is an equivalence of $\pi_0(R)$-modules $$\THH(S,\pi_0(S))\simeq \THH(R,\pi_0(R))\otimes_{\pi_0(R)}(\pi_0(R)\otimes_{\pi_0(R) \otimes_{R} S}\pi_0(R)).$$
\end{theorem}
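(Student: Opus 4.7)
The plan is to mimic the argument of Lemmas \ref{lem4.1}--\ref{lem4.2} for the general pair $(R,S)$, with $\pi_0(R)$ playing the role previously played by $\zz_{(p)}$. The positive-degree hypothesis on $x$ is analogous to the condition $n>1$ in those lemmas: it forces $|x^k|>0$, which both identifies $\pi_0(S)\simeq\pi_0(R)$ and guarantees that tensoring the defining cofibre sequence with $\pi_0(R)$ over $R$ yields a triangle $\Sigma^{k|x|}\pi_0(R)\to\pi_0(R)\to\pi_0(R)\otimes_RS$ whose first map, being multiplication by the image of $x^k$ in $\pi_{k|x|}(\pi_0(R))=0$, is null. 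Consequently $\pi_0(R)\otimes_RS\simeq\pi_0(R)\oplus\Sigma^{k|x|+1}\pi_0(R)$ as $\pi_0(R)$-modules, so its homotopy is concentrated in degrees $0$ and $k|x|+1$ and $\tau_{\leq k|x|}(\pi_0(R)\otimes_RS)\simeq\pi_0(R)$ canonically. First, I apply Proposition \ref{proposition2.7} to the $\mathbb{E}_1$-algebra map $S\to\pi_0(S)\simeq\pi_0(R)$ to reduce to computing $\THH(\pi_0(R),\pi_0(R)\otimes_S\pi_0(R))$; the hypothesis $m\geq 4$ together with \cite[Proposition 7.1.3.15]{HA} ensures that all tensor products in sight carry enough multiplicative structure for the subsequent manipulations to be well-defined.

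The main manipulation is then to establish the analog of Lemma \ref{lem4.1}, namely an equivalence
$$\pi_0(R)\otimes_S\pi_0(R)\simeq(\pi_0(R)\otimes_R\pi_0(R))\otimes_{\pi_0(R)}(\pi_0(R)\otimes_{\pi_0(R)\otimes_RS}\pi_0(R)).$$
Inserting the intermediate algebra $\pi_0(R)\otimes_RS$ and factoring $S\to\pi_0(R)$ as $S\to\pi_0(R)\otimes_RS\to\pi_0(R)$ (the second map being the canonical truncation above), associativity of iterated pushouts gives $\pi_0(R)\otimes_S\pi_0(R)\simeq(\pi_0(R)\otimes_R\pi_0(R))\otimes_{\pi_0(R)\otimes_RS}\pi_0(R)$, and one then rewrites the cospan $\pi_0(R)\leftarrow\pi_0(R)\otimes_RS\to\pi_0(R)\otimes_R\pi_0(R)$ as the tensor product over $\pi_0(R)$ of the two cospans $\pi_0(R)\leftarrow\pi_0(R)\otimes_RS\to\pi_0(R)$ and $\pi_0(R)\leftarrow\pi_0(R)\to\pi_0(R)\otimes_R\pi_0(R)$, exactly as in the proof of Lemma \ref{lem4.1}. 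Finally, applying $\THH(\pi_0(R),-)$ to both sides, using that this functor separates an outer $\otimes_{\pi_0(R)}$-factor whenever that factor is a commutative $\pi_0(R)$-algebra acting symmetrically as a bimodule, and invoking cyclic invariance in reverse to identify $\THH(\pi_0(R),\pi_0(R)\otimes_R\pi_0(R))\simeq\THH(R,\pi_0(R))$, produces the desired formula.

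The hard part will be justifying the cospan rewriting in the second step, which at its heart requires showing that the map $\pi_0(R)\otimes_RS\to\pi_0(R)\otimes_R\pi_0(R)$ factors through the truncation $\pi_0(R)\otimes_RS\to\pi_0(R)$, or equivalently that the $\Sigma^{k|x|+1}\pi_0(R)$-summand of $\pi_0(R)\otimes_RS$ maps nullly into $\pi_0(R)\otimes_R\pi_0(R)$. In the proof of Lemma \ref{lem4.1} this was immediate from the explicit description of $\pi_*(\zz_{(p)}\otimes_\ell\zz_{(p)})=\Lambda_{\zz_{(p)}}[\rho]$, whose homotopy vanishes in all the relevant high degrees. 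In the present generality one has no a priori handle on $\pi_{k|x|+1}(\pi_0(R)\otimes_R\pi_0(R))$, so the cleanest substitute I anticipate is to exploit the $\mathbb{E}_3$-structure on $\pi_0(R)\otimes_RS$ to identify it as a trivial square-zero extension of $\pi_0(R)$ in the spirit of \cite[Proposition 2.1]{Dundas_2018} and deduce the needed nullhomotopy from the universal property of such extensions together with the observation that the composite $\pi_0(R)\otimes_RS\to\pi_0(R)\otimes_R\pi_0(R)$ is itself induced by the $\pi_0(R)$-linear truncation map $S\to\pi_0(R)$.
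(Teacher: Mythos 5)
Your overall strategy is correct and matches the paper's: apply cyclic invariance to reduce to $\THH(\pi_0(R),\pi_0(R)\otimes_S\pi_0(R))$, then decompose $\pi_0(R)\otimes_S\pi_0(R)$ in the manner of Lemma~\ref{lem4.1}. You have also correctly isolated the crux, namely showing that $\pi_0(R)\otimes_R S\to\pi_0(R)\otimes_R\pi_0(R)$ factors through the truncation $\pi_0(R)\otimes_R S\to\tau_{\leq 0}(\pi_0(R)\otimes_R S)\simeq\pi_0(R)$. However, your anticipated resolution via the universal property of trivial square-zero extensions does not supply the needed nullhomotopy: that universal property classifies maps out of $\pi_0(R)\otimes_R S\simeq\pi_0(R)\oplus\Sigma^{k|x|+1}\pi_0(R)$ by arbitrary $\pi_0(R)$-module maps out of the ideal, and gives no constraint forcing the particular map $\Sigma^{k|x|+1}\pi_0(R)\to\pi_0(R)\otimes_R\pi_0(R)$ to vanish. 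Indeed $\pi_{k|x|+1}(\pi_0(R)\otimes_R\pi_0(R))$ can be nonzero, and observing that the map is ``induced by the truncation'' does not help, because $\pi_0(R)\otimes_R\pi_0(R)$ is not itself $0$-truncated. The paper's argument uses the hypothesis $k>1$ more directly: since $S\simeq R/x^k$ as an $R$-module, the truncation $S\to\pi_0(R)$ factors $R$-linearly as $R/x^k\to R/x^{k-1}\to\pi_0(R)$. Tensoring with $\pi_0(R)$, the intermediate term $\pi_0(R)\otimes_R R/x^{k-1}$ has homotopy concentrated in degrees $0$ and $(k-1)|x|+1$; since $(k-1)|x|+1<k|x|+1$ (here both $|x|>0$ and $k>1$ are used), the map $\pi_0(R)\otimes_R S\to\pi_0(R)\otimes_R R/x^{k-1}$ factors through $\tau_{\leq(k-1)|x|+1}(\pi_0(R)\otimes_R S)\simeq\pi_0(R)$, which is exactly the factorization you need. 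This specific observation is the missing ingredient in your proposal.

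A secondary difference worth noting: after obtaining the decomposition of $\pi_0(R)\otimes_S\pi_0(R)$, the paper does not simply apply $\THH(\pi_0(R),-)$ and pull the second tensor factor out, as in Lemma~\ref{lem4.2}. Instead it runs the Brun--Atiyah--Hirzebruch spectral sequence for $\THH(S,\pi_0(S))$ (using Lemma~\ref{lemma2.6} and the map from $\THH(\pi_0(R),\tau_{\geq *}(\pi_0(R)\otimes_R\pi_0(R)))$) to settle the claim on homotopy groups, and then constructs an explicit $\pi_0(R)$-module map realizing the equivalence. This sidesteps the need to verify that your chain of identifications is an equivalence of $\pi_0(R)\otimes_{\sss}\pi_0(R)$-bimodules and that $\pi_0(R)\otimes_{\pi_0(R)\otimes_R S}\pi_0(R)$ (which is only $\bb{E}_1$, given that $S$ is merely $\bb{E}_3$) can be pulled through $\THH$ in the way you describe; without those checks, the ``rearranging tensor products'' step in your last paragraph is not yet justified.
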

\begin{proof}
By proposition \ref{proposition2.7}, using that $\pi_0(R)=\pi_0(S)$, $$\THH(S,\pi_0(R))\simeq \THH(\pi_0(R),\pi_0(R)\otimes_{S}\pi_0(R)).$$ There is a string of equivalences
\begin{align*}
\pi_0(R)\otimes_{S}\pi_0(R)&\simeq (\pi_0(R)\otimes_{\sss}\pi_0(R)) \otimes_{S\otimes_{\sss}S} S\\
&\simeq (\pi_0(R)\otimes_{\sss}\pi_0(R)) \otimes_{S\otimes_{\sss}S} (S\otimes_{R} S)\otimes_{(S\otimes_{R} S)}S\\
&\simeq ((\pi_0(R)\otimes_{\sss}\pi_0(R)) \otimes_{S\otimes_{\sss}S} ((S\otimes_{\sss}S)\otimes_{R\otimes_{\sss}R}R))\otimes_{(S\otimes_{R} S)}S\\
&\simeq  ((\pi_0(R)\otimes_{\sss}\pi_0(R)) \otimes_{R\otimes_{\sss}R} R)\otimes_{(S\otimes_{R} S)}S\\
&\simeq (\pi_0(R)\otimes_R \pi_0(R))\otimes_{(S\otimes_{R} S)}S\\
&\simeq (\pi_0(R)\otimes_R \pi_0(R))\otimes_{\pi_0(R)\otimes_{R} S}(\pi_0(R)\otimes_{R} S)\otimes_{(S\otimes_{R} S)}S\\
&\simeq (\pi_0(R)\otimes_R \pi_0(R))\otimes_{\pi_0(R)\otimes_{R} S} \pi_0(R).
\end{align*}
The first, third, fourth, fifth, and seventh equivalences hold by \cite[~Theorem 5.1.4.10]{HA}. The second equivalence follows by simple rewriting $S$ as $(S\otimes_{R}S)\otimes_{S\otimes_{R}S}S$, and the fact that $S\otimes_{\sss}S\to S$ factors over $S\otimes_{R}S\to S$.  Finally, the sixth equivalence uses that $S\otimes_{R}S\to \pi_0(R)\otimes_{R}\pi_0(R)$ factors as $S\otimes_{R}S\to \pi_0(R)\otimes_{R}S\to \pi_0(R)\otimes_{R}\pi_0(R)$. 

Now, by the assumption $S\simeq R/x^k$ as $R$-modules, it follows that the map $S\to \pi_0(S)=\pi_0(R)$ factors over some $R$-module map $S\to R/x^{k-1}\to \pi_0(R)$.  Thus, $\pi_0(R)\otimes_{R} S\to \pi_0(R)\otimes_R \pi_0(R)$ factors as $\pi_0(R)\otimes_{R} S\to\pi_0(R)\otimes_{R} R/x^{k-1}\to \pi_0(R)\otimes_R \pi_0(R)$.  Since $\pi_0(R)\otimes_{R} R/x^{k-1}$ has homotopy groups given by 
$$\pi_*(\pi_0(R)\otimes_{R} R/x^{k-1})=\begin{cases}
	\pi_0(R) &\qquad \text{if } *=0,\, (k-1)|x|+1,\\
	0 &\qquad \text{otherwise},
	\end{cases}$$
we see that $\pi_0(R)\otimes_{R} S\to \pi_0(R)\otimes_{R} R/x^{k-1}$ factors over $\tau_{\leq (k-1)|x|+1}(\pi_0(R)\otimes_{R} S)\simeq \pi_0(R)$.  This implies that $\pi_0(R)\otimes_{R}\pi_0(R)$, as a right $\pi_0(R)\otimes_{R} S$-module, is equivalent to \\$(\pi_0(R)\otimes_{R} \pi_0(R))\otimes_{\pi_0(R)}\pi_0(R)$, with the induced right module structure on $\pi_0(R)$.  Thus, 
\begin{align*}
\pi_0(R)\otimes_{S}\pi_0(R)&\simeq (\pi_0(R)\otimes_R \pi_0(R))\otimes_{\pi_0(R)\otimes_{R} S} \pi_0(R)\\
&\simeq ((\pi_0(R)\otimes_{R} \pi_0(R))\otimes_{\pi_0(R)} \pi_0(R)) \otimes_{\pi_0(R) \otimes_{R} S}\pi_0(R)\\
&\simeq (\pi_0(R)\otimes_{R} \pi_0(R))\otimes_{\pi_0(R)} (\pi_0(R) \otimes_{\pi_0(R) \otimes_{R} S}\pi_0(R)).
\end{align*}
The first two equivalences follow by our discussion above, and the final equivalence uses that $\pi_0(R)\otimes_{R}S\to \pi_0(R)\otimes_{R}\pi_0(R)$ factors over $\pi_0(R)$.  This derived tensor product in $\pi_0(R)$-modules can be computed as a tensor product on underlying modules, since the second module is flat (in fact free).  Indeed, from the cofiber sequence $\Sigma^{k|x|}R\xrightarrow{x^k}R\to S$, we can tensor this with $\pi_0(R)$ to find that $\pi_0(R)\otimes_{R} S\simeq \pi_0(R)\oplus \Sigma^{k|x|+1}\pi_0(R)$ as a $\pi_0(R)$-module.  We then have a periodic resolution of $\pi_0(R)$ from this class in degree $k|x|+1$, which allows us to see that $$\pi_0(R) \otimes_{\pi_0(R) \otimes_{R} S}\pi_0(R)\simeq \bigoplus_{r\geq 0}\Sigma^{r(k|x|+2)}\pi_0(R)$$ as a $\pi_0(R)$-module.

Now, we apply the Whitehead filtration to $\pi_0(S)\otimes_{S}\pi_0(S)$, and examine the Brun-Atiyah-Hirzebruch spectral sequence for $\THH(S, \pi_0(S))$.  Note that this spectral sequence is multiplicative, since by assumption, $S$ is an $\bb{E}_3$-$R$-algebra, so that the maps $S\to \pi_0(S)=\tau_{\leq 0}S$ are $\bb{E}_3$-algebra maps.  This implies that $(\pi_0(S)\otimes_{S}\pi_0(S))$ is an $\bb{E}_2$-$\pi_0(R)$-algebra by \cite[~Proposition 7.1.2.6]{HA}. We have a map $$\THH(\pi_0(R), \tau_{\geq *}(\pi_0(R)\otimes_{R}\pi_0(R)))\to \THH(\pi_0(S), \tau_{\geq *}(\pi_0(S)\otimes_{S}\pi_0(S)))$$ which descends to a map on the associated Brun-Atiyah-Hirzebruch spectral sequences.  Since $\pi_0(S)\simeq \pi_0(R)$ are $\bb{E}_{\infty}$-rings, lemma \ref{lemma2.6} applies and both of these Atiyah-Hirzebruch spectral sequences are multiplicative.  The $E_1$-page of the target is multiplicatively generated by the classes in the image of this map, together with classes generating copies of $\pi_0(R)$ in degrees $(0,r(k|x|+2))$ for $r>0$.  Since there are no nonzero classes in bidegree $(s,t)$ for $s>0$, the differentials vanish on these classes, and there are no multiplicative extension problems between them.  The map from $\THH(\pi_0(R), \tau_{\geq *}(\pi_0(R)\otimes_{R}\pi_0(R)))$ determines the extension problems on the classes in its image, and this determines all of the additive extension problems, since any nonzero class $a$ in the image of this map multiplies with any nonzero class $b$ coming from $(\pi_0(R) \otimes_{\pi_0(R) \otimes_{R} S}\pi_0(R))$ to a nonzero class.\footnote{In fact, we have maps of underlying graded $\pi_0(S)$-algebras $\THH_*(R,\pi_0(R))\to \THH_*(S,\pi_0(S))$, and $\pi_*(\pi_0(R)\otimes_{\pi_0(R)\otimes_{R}S}\pi_0(R))\to \THH_{*}(S,\pi_0(S))$, which extends to an algebra homomorphism on their graded tensor product, which Theorem \ref{th5.1} shows to be an isomorphism, showing there are no multiplicative extension problems either.}

This establishes the claim on the level of homotopy groups.  For the full claim, note that we have a map $$\THH(\pi_0(R),\pi_0(R)\otimes_{S}\pi_0(R))\to \THH(\pi_0(R),\pi_0(R) \otimes_{\pi_0(R) \otimes_{R} S}\pi_0(R))\to \pi_0(R) \otimes_{\pi_0(R) \otimes_{R} S}\pi_0(R),$$ which admits a splitting $\varphi:\bigoplus_{r\geq 0}\Sigma^{r(k|x|+2)}\pi_0(R)\to \THH(\pi_0(R),\pi_0(R)\otimes_{S}\pi_0(R))$.  Since $\THH(\pi_0(R),\pi_0(R)\otimes_{S}\pi_0(R))$ admits a $\THH(\pi_0(R),\pi_0(R)\otimes_{R}\pi_0(R))$-module structure coming from the natural map, $\varphi$ extends to a map $$\THH(\pi_0(R),\pi_0(R)\otimes_{R}\pi_0(R))\otimes_{\pi_0(R)}\pi_0(R) \otimes_{\pi_0(R) \otimes_{R} S}\pi_0(R) \to \THH(\pi_0(R),\pi_0(R)\otimes_{S}\pi_0(R)),$$ which provides our desired equivalence.
\end{proof}
\begin{remark}\label{rem5.2}  As the proof indicates, we can replace $x^k$ by any class $x$ in positive degree such that $R/x$ admits some $\bb{E}_3$-$R$-algebra, and such that $(R/x\otimes_{R}\pi_0(R))\to (\pi_0(R) \otimes_R \pi_0(R))$ factors over $(R/x\otimes_{R}\pi_0(R))\to \tau_{\leq 0}((R/x\otimes_{R}\pi_0(R)))\simeq\pi_0(R)$.
	\end{remark}
\begin{corollary}\label{cor5.3}
	Let $R$, $x$ and $S$ be as in Theorem \ref{th5.1}.  Then, the $E_1$-page of the Whitehead spectral sequence computing $\THH(S)$ is isomorphic to the $E_1$-page of the Atiyah-Hirzebruch spectral sequence computing $\THH(R,S)$ tensored over $\pi_0(R)$ with $\bigoplus_{r\geq 0}\pi_0(R)\cdot a_r$, where $a_r$ is a class in bidegree $(-r(k|x|+2),0)$.
\end{corollary}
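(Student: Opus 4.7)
The plan is to bootstrap Theorem~\ref{th5.1} (which handles the coefficient $\pi_0(S) = \pi_0(R)$) to arbitrary coefficients $\pi_t(S)$ by exploiting $\pi_0(R)$-linearity of the $\THH(T,-)$ functors, and then match the result against the claimed bigraded $E_1$-page description. Concretely, the WSS computing $\THH(S)$ has $E_1^{s,t} = \THH_{-s}(S,\pi_t(S))$, while the AHSS computing $\THH(R,S)$ has $E_1^{s,t} = \THH_{-s}(R,\pi_t(S))$; since $a_r$ sits in bidegree $(-r(k|x|+2),0)$, the assertion reduces to giving, for each $t$, a natural graded $\pi_0(R)$-module isomorphism
\[
\THH_*(S,\pi_t(S)) \;\simeq\; \THH_*(R,\pi_t(S)) \otimes_{\pi_0(R)} \bigoplus_{r\geq 0}\Sigma^{r(k|x|+2)}\pi_0(R).
\]

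To obtain this, I would argue that for $T\in\{R,S\}$ the functor $\THH(T,-)$ on $T$-bimodules is colimit-preserving, and that since $\pi_*(S)$ is graded-commutative ($S$ being $\bb{E}_3$), each $\pi_t(S)$ is a symmetric $\pi_0(R)$-bimodule. Consequently the restriction of $\THH(T,-)$ to discrete modules over $\pi_0(R)$ (regarded as $T$-bimodules via $T \to \pi_0(T) = \pi_0(R)$) is colimit-preserving and $\pi_0(R)$-linear, hence determined on homotopy groups by its value on $\pi_0(R)$ through the formula
\[
\THH(T,N) \;\simeq\; \THH(T,\pi_0(R)) \otimes^L_{\pi_0(R)} N.
\]
Applying this for $N = \pi_t(S)$ and both $T = R$ and $T = S$, and invoking Theorem~\ref{th5.1} for $\THH(S,\pi_0(R))$ together with the identification (established in its proof) of $\pi_0(R)\otimes_{\pi_0(R)\otimes_{R}S}\pi_0(R) \simeq \bigoplus_{r\geq 0}\Sigma^{r(k|x|+2)}\pi_0(R)$ as a free $\pi_0(R)$-module, substitution yields
\[
\THH(S,\pi_t(S)) \;\simeq\; \THH(R,\pi_t(S)) \otimes_{\pi_0(R)} \bigoplus_{r\geq 0}\Sigma^{r(k|x|+2)}\pi_0(R),
\]
with the derived tensor product collapsing to the ordinary one by freeness. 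Passing to $\pi_*$ and assembling over all $t$ produces the claimed identification of $E_1$-pages, with $a_r$ corresponding to the generator of the $r$th free summand in bidegree $(-r(k|x|+2),0)$.

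The principal subtlety is justifying the $\pi_0(R)$-linear structure on $\THH(T,-)$ when restricted to discrete $\pi_0(R)$-bimodules: this is finer than the evident $R$-linearity (provided by the $\bb{E}_1$-map $R\to T$), but follows once one observes that the $R$-action on any discrete $T$-bimodule factors through $\pi_0(R)$, and that elements of $\pi_0(R)$ act as bona fide $T$-bimodule endomorphisms of $\pi_t(S)$ because the left and right $\pi_0(R)$-actions agree on the symmetric bimodule $\pi_t(S)$.
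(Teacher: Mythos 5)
Your proposal is correct and follows essentially the same approach as the paper's own proof: one reduces the identification of $E_1$-pages to a $\pi_0(R)$-linearity statement $\THH(T,N)\simeq\THH(T,\pi_0(R))\otimes^L_{\pi_0(R)}N$ for discrete $\pi_0(R)$-modules $N$, applies Theorem~\ref{th5.1} to handle the base case $N=\pi_0(R)$, and then reassociates the tensors using the fact that $\pi_0(R)\otimes_{\pi_0(R)\otimes_{R}S}\pi_0(R)$ is free over $\pi_0(R)$. The paper's version is a terse string of three displayed equivalences (with a small notational slip writing $\pi_*(R)$ where the coefficient should be $\pi_*(S)$), whereas you spell out the $\pi_0(R)$-linearity and the symmetric-bimodule point explicitly; but these are the same argument.
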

\begin{proof}
This follows from the string of equivalences
\begin{align*}
\THH(S,\pi_*(R))&\simeq \THH(S,\pi_0(R))\otimes_{\pi_0(R)} \pi_*(R)\\
&\simeq \THH(R,\pi_0(R))\otimes_{\pi_0(R)}(\pi_0(R) \otimes_{\pi_0(R) \otimes_{R} S}\pi_0(R))\otimes_{\pi_0(R)}\pi_*(R)\\
&\simeq \THH(R,\pi_*(R))\otimes_{\pi_0(R)}(\pi_0(R) \otimes_{\pi_0(R) \otimes_{R} S}\pi_0(R)),
\end{align*}
where Theorem \ref{th5.1} was used to get the second equivalence.
\end{proof}
\begin{remark}\label{rem5.4}  We don't know whether or not $$\THH_*(S)\simeq \THH_*(R,S)\otimes_{\pi_0(S)} \pi_*(\pi_0(S)\otimes_{\pi_0(S)\otimes_{R}S}S)$$ in general, although this does seem to hold in many cases.
\end{remark}
Together with the map from $\THH(R,\tau_{\geq *}(S))$, this means that to understand $\THH(S)$, we need only understand $\THH(R,S)$, and how the differentials act on the classes $a_r$.

\newpage
\section{The THH of $\ksc_2$}
Using the results of section \textsection 5, we will compute $\THH(\ksc_2)$ and show that it fits into the same overall framework.  Recall (cf. \cite{BOUSFIELD1990121}) that connective self-conjugate $K$-theory, $\ksc$ is the $\bb{E}_{\infty}$-ring defined as the connective cover of the homotopy fixed points of $\mo{KU}$ for the action of $\zz$ through complex conjugation $\psi^{-1}$.  In particular, since this $\zz$ action factors over a $\zz/2\zz$-action, there is a natural $\bb{E}_{\infty}$-algebra map $\ko\simeq \tau_{\geq 0}\mo{KU}^{h\zz/2\zz}\to \tau_{\geq 0}\mo{KU}^{h\zz}\simeq \ksc$.  This sits naturally in a cofiber sequence $\Sigma^2\ko\xrightarrow{\eta^2}\ko\to \ksc$, which is what allows for the methods developed in \textsection 5 to apply.

To begin, we define a filtration similar to what has been studied in \cite{angeltveit2015algebraic}, \cite{lee2023topological}.
\begin{definition}\label{def6.1}
There is an \textit{equivariant May-style filtration} on $\ksc_2$, given by the filtered $\bb{E}_{\infty}$-ring $\ksc_2^{fil}:=\tau^{1/2}_{\geq 0}((\tau_{\geq *}\mo{ku}_2)^{h\zz})$.  Similarly, one can define $\ko_2^{fil}:=\tau^{1/2}_{\geq 0}((\tau_{\geq *}\mo{ku}_2)^{h C_2})$.
\end{definition}
To begin, let's compute the associated graded objects for these equivariant May-style filtrations:
\begin{lemma}\label{lem6.2}
We have $$\pi_{*,*}(\ko_2^{gr})=\zz_{(2)}[v_1^2,\eta]/(2\eta),$$ with $|\eta|=(1,2)$, $|v_1|=(0,4)$.  The cofiber of $\eta^2$ on this filtered spectrum gives the underlying filtered spectrum $\ksc_2^{fil}$, and thus $$\pi_{*,*}(\ksc_2^{gr})=\zz_{(2)}[v_1^2,\eta,\rho]/(2\eta, \eta^2,\rho\eta, \rho^2),$$ with $|\rho|=(1,4)$.
\end{lemma}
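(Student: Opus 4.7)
The plan is to compute $\pi_{*,*}$ of the associated graded via the homotopy fixed point spectral sequence (HFPSS)---separately for $\ko_2^{gr}$ using $G = C_2$ and for $\ksc_2^{gr}$ using $G = \zz$---and then to verify the cofiber-sequence identification. First, the associated graded of $\tau_{\geq *}\mo{ku}_2$ is the graded $\bb{E}_\infty$-ring $H\zz_{(2)}[v_1]$ with $v_1$ in bidegree $(0,2)$, on which the complex conjugation action of $C_2$ (and hence of $\zz$, through the quotient) sends $v_1 \mapsto -v_1$. Since $(-)^{hG}$ is a right adjoint, it commutes with fiber sequences and hence with formation of associated graded in the stable setting, so $\mo{gr}^*((\tau_{\geq *}\mo{ku}_2)^{hG}) \simeq (H\zz_{(2)}[v_1])^{hG}$ for $G = C_2$ or $\zz$. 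The HFPSS collapses at $E_2$ because the input is a sum of shifted Eilenberg--MacLane spectra: for $G = C_2$, the factor at $v_1^k$ contributes $H^p(C_2;\zz_{(2)})$ with trivial action if $k$ is even and $H^p(C_2;\zz_{(2)}^{\sgn})$ with sign action if $k$ is odd; for $G = \zz$, cohomological dimension $1$ restricts $p$ to $\{0,1\}$. This determines the bigraded homotopy of $(H\zz_{(2)}[v_1])^{hC_2}$ and $(H\zz_{(2)}[v_1])^{h\zz}$ explicitly.

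Next, applying $\tau^{1/2}_{\geq 0}$, which keeps only classes with $s \leq \lfloor t/2 \rfloor$ in bidegree $(s,t)$, leaves exactly the monomials $v_1^{2m}\eta^k$ (for $m, k \geq 0$) in $\ko_2^{gr}$, and the same classes plus $v_1^{2m}\rho$ (for $m \geq 0$) with $\rho$ in bidegree $(1,4)$ in $\ksc_2^{gr}$, confirming the stated presentations. The relation $2\eta = 0$ reflects $\eta \in H^1(C_2;\zz_{(2)}^{\sgn}) = \zz/2$, and the relations $\eta^2 = \rho\eta = \rho^2 = 0$ in $\ksc_2^{gr}$ hold for degree reasons, as the relevant bidegrees $(2,4), (2,6), (2,8)$ contain no classes.

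To identify $\ksc_2^{fil}$ with $\cofib(\eta^2)$, construct a comparison map $\ko_2^{fil} \to \ksc_2^{fil}$ by applying $\tau^{1/2}_{\geq 0}$ to the natural restriction $(\tau_{\geq *}\mo{ku}_2)^{hC_2} \to (\tau_{\geq *}\mo{ku}_2)^{h\zz}$. Since $\pi_{2,4}(\ksc_2^{gr}) = 0$, the composite $\Sigma^{2,4}\ko_2^{fil} \xrightarrow{\eta^2}\ko_2^{fil} \to \ksc_2^{fil}$ is null on the associated graded, and one lifts this to a filtered null-homotopy using completeness of both sides under $\tau^{1/2}_{\geq 0}$. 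The induced map $\cofib(\eta^2) \to \ksc_2^{fil}$ is then an equivalence by comparison on associated gradeds: the long exact sequence of $\Sigma^{2,4}\ko_2^{gr} \xrightarrow{\eta^2}\ko_2^{gr} \to \cofib(\eta^2)^{gr}$ reproduces the explicit computation of $\pi_{*,*}(\ksc_2^{gr})$, with $\rho$ at $(1,4)$ emerging as the kernel of $\cdot\eta^2: \pi_{0,0}(\ko_2^{gr}) = \zz_{(2)} \to \pi_{2,4}(\ko_2^{gr}) = \zz/2$ under the connecting homomorphism.

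The main obstacle is the filtered-level cofiber identification. Because $\tau^{1/2}_{\geq 0}$ does not commute with topological suspension, the filtration of $\eta^2$ in $\ko_2^{fil}$ sits at $4$ (reflecting its bidegree $(2,4)$) rather than at the naive $2$, so care is needed with the suspensions and filtration shifts. Moreover, an obvious attempt to split $\ksc_2^{fil}$ off $\ko_2^{fil}$ via the group extension $2\zz \to \zz \to C_2$ fails due to the non-split nature of this extension, which introduces a nontrivial monodromy on the iterated homotopy fixed points. A careful argument is thus needed---either using completeness of both filtrations to reduce to the associated-graded equivalence sketched above, or by directly constructing the null-homotopy of $\eta^2$ in $\ksc_2^{fil}$ and then verifying the resulting map on $\bb{E}_\infty$-ring homotopy groups.
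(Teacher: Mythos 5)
Your route to the $\pi_{*,*}$-computations is essentially the one the paper takes: pass to the associated graded of $\tau_{\geq *}\mo{ku}_2$ (which is $\zz_{(2)}[v_1]$ with $v_1$ in bidegree $(0,2)$ and the conjugation action $v_1\mapsto -v_1$), take homotopy fixed points for $C_2$ resp.\ $\zz$, read off the answer from the collapsing HFPSS, and then apply $\tau^{1/2}_{\geq 0}$, keeping the bidegrees with $s\leq\lfloor t/2\rfloor$. Your bookkeeping of which monomials survive (and the resulting presentations) agrees with the paper.

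However, there is a genuine gap in your argument that the paper treats as a real point requiring justification: you tacitly assume that $\tau^{1/2}_{\geq 0}$ commutes with $\mo{gr}$. Your observation that $(-)^{hG}$ commutes with cofibers handles the interchange of homotopy fixed points with $\mo{gr}$, but says nothing about the truncation $\tau^{1/2}_{\geq 0}$, which is a connective cover (a non-exact functor) and does not in general commute with the colimit that forms the associated graded. Concretely, you need
$$\cofib\bigl(\tau_{\geq\lceil (n-1)/2\rceil}X_{n-1}\to\tau_{\geq\lceil n/2\rceil}X_n\bigr)\simeq\tau_{\geq\lceil n/2\rceil}\cofib(X_{n-1}\to X_n),$$
which can fail if there are homotopy classes of $X_n$ (or $X_{n-1}$) sitting just below the connectivity cutoff. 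The paper handles this by citing \cite[Lemma~2.16]{lee2023topological} for the $C_2$-case, and for the $\zz$-case by explicitly computing $\pi_{*,*}((\tau_{\geq *}\mo{ku}_2)^{h\zz})$ as $\zz_{(2)}[\tau,z,\tau^{-2}\eta,\tau^{-4}v_1^2]/(\cdots)$ and checking that there is no $\tau$-torsion and no classes in the problematic bidegrees $(n+1,2n+1)$. You should either import that lemma or reproduce this check; without it, your identification of $(\ko_2^{fil})^{gr}$ and $(\ksc_2^{fil})^{gr}$ with $\tau^{1/2}_{\geq 0}$ applied to the fixed points of the graded ring is unjustified.

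On the cofiber identification: you devote considerable effort (and flag open concerns about the filtration shift on $\eta^2$, the non-split extension $2\zz\to\zz\to C_2$, etc.) to building a filtered null-homotopy of $\eta^2$ in $\ksc_2^{fil}$ and comparing on associated gradeds. This is a legitimate alternative route, but it is more work than needed, and partly orthogonal to the lemma: once you have computed $\pi_{*,*}(\ksc_2^{gr})$ directly from the $\zz$-HFPSS, the cofiber description of $\ksc_2^{gr}$ is not needed as input, and the associated-graded-level fiber sequence is then extracted cleanly in the paper's Corollary~\ref{cor6.3} by reducing modulo~$2$ (where $\ko_2^{gr}/2\simeq\fff_2[v_1,\eta]$ and $\ksc_2^{gr}/2\simeq\fff_2[v_1,\eta]/(\eta^2)$), avoiding any long-exact-sequence/connecting-homomorphism analysis of the kind you sketch.
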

\begin{proof}
Consider the filtered $\bb{E}_{\infty}$-ring $\tau_{\geq *}\mo{ku}_2$.  The associated graded is given simply by a polynomial ring $\zz_{(2)}[v_1]$, with $|v_1|=(0,2)$.  Since taking homotopy fixed points commutes with passing to the associated graded, we have $$((\tau_{\geq *}\mo{ku}_2)^{hC_2})^{gr}=(\pi_*\mo{ku}_2)^{hC_2}.$$  Computing the homotopy fixed points, it follows that  $$\pi_{*,*}((\pi_*\mo{ku}_2)^{hC_2})=\zz_{(2)}[v_1^2,x,y]/(2x,2y,y^2-xv_1^2),$$ where $|x|=(2,0)$ generates $\zz_{(2)}^{hC_2}$ in filtration degree 0, and $|y|=(1,2)$ generates the homotopy fixed points of $\zz_{(2)}\cdot v_1$ as a $\zz_{(2)}^{hC_2}$-module.  By the argument in \cite[~Lemma 2.16]{lee2023topological}, it suffices to apply $\tau^{1/2}_{\geq 0}$ to the associated graded.  Since, for $i\geq 0$ $\tau^{1/2}_{\geq 0}(x)_i=\tau_{\geq \lceil i/2\rceil}(x_i)$, one sees that $$\ko_2^{gr}=\tau_{\geq 0}^{1/2}((\pi_*\mo{ku})^{hC_2})=\zz_{(2)}[v_1^2,y]/(2y),$$
proving the first claim.

For $\ksc_2$, we proceed similarly, using that the homotopy fixed points for the trivial action are given by $\zz_{(2)}^{h\zz}\simeq \Lambda(w)$ on a class $w$ in $\pi_{-1}(\zz_{(2)}^{h\zz})$.  For the antipodal action, one can compute that $(\zz_{(2)}[t^{\pm 1}]/(t+1))^{h\zz}\simeq \Sigma^{-1}\zz/2\zz$.  Putting these together, $$\pi_{*,*}((\pi_*\mo{ku}_2)^{h\zz})=\zz_{(2)}[v_1^2,w,y]/(w^2,2y,y^2,wy),$$ with $|w|=(1,0)$, $|y|=(1,2)$.

In order to see that $\tau^{1/2}_{\geq 0}$ commutes with taking the associated graded for $\ksc_2$, one must compute $\pi_{*,*}(\tau_{\geq *}\mo{ku}_2)^{h\zz}$.  For this, note that our filtered object has the form $$\ldots \simto\tau_{\geq3} \mo{ku}_2\to\tau_{\geq 2}\mo{ku}_2\simto\tau_{\geq 1}\mo{ku}_2\to\mo{ku}_2\simto\ldots.$$  The homotopy fixed point spectral sequences computing $\pi_*(\tau_{\geq n}\mo{ku}_2^{h\zz})$ degenerate at the $E_2$-page for all $n$, which yields: $$\pi_{*,*}((\tau_{\geq *}\mo{ku}_2)^{h\zz})=\zz_{(2)}[\tau,z,\tau^{-2}\eta,\tau^{-4}v_1^2]/(2\tau^{-2}\eta,(\tau^{-2}\eta)^2,z^2,z\tau^{-2}\eta),$$ with $|\tau|=(-1,-1)$, $|z|=(1,0)$, $|\tau^{-2}\eta|=(1,2)$, $|\tau^{-4}v_1^2|=(0,4)$.  As there are no classes in bidegrees $(n+1,2n+1)$ for $n\geq 0$, and there is no $\tau$-torsion, it follows that $$\ksc_2^{gr}=\tau_{\geq 0}^{1/2}((\pi_*\mo{ku}_2)^{h\zz}).$$  By the computation above, this has homotopy groups $$\pi_{*,*}(\ksc_2^{gr})=\zz_{(2)}[v_1^2,\rho,\eta]/(\eta^2,2\eta,\rho^2,\eta\rho),$$ where $\rho$ comes from the class previously denoted $v_1^2w$, so $|\rho|=(1,4)$, and $\eta$ comes from the class previously denoted by $y$, sitting in bidegree $(1,2)$.
\end{proof}
\begin{corollary}\label{cor6.3}
The canonical map of graded $\bb{E}_{\infty}$-rings $\ko_2^{gr}\to\ksc_2^{gr}$ fits into a fiber sequence of $\ko_2^{gr}$-modules:
$$\Sigma^{2}\ko_2^{gr}\xrightarrow{\eta^2}\ko_2^{gr}\to \ksc_2^{gr}.$$
\end{corollary}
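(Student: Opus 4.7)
The plan is to lift the classical Bousfield cofiber sequence $\Sigma^2\ko_2 \xrightarrow{\eta^2} \ko_2 \to \ksc_2$ (cited in the introduction to this section) to a cofiber sequence in the category of filtered spectra, and then conclude by applying the exact associated-graded functor.

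First, the class $\eta \in \pi_1(\ko_2)$ admits a canonical bigraded lift to $\pi_{1,2}(\ko_2^{fil})$ given by the class labeled $y$ in the proof of Lemma \ref{lem6.2}. Since $\ko_2^{fil}$ is an $\bb{E}_\infty$-ring in filtered spectra, this class determines multiplication-by-$\eta^2$ as a map of $\ko_2^{fil}$-modules $\Sigma^{(2,4)}\ko_2^{fil} \xrightarrow{\eta^2} \ko_2^{fil}$ whose underlying map of spectra is the usual $\eta^2$ on $\ko_2$. Post-composing with the canonical $\bb{E}_\infty$-algebra map $\ko_2^{fil} \to \ksc_2^{fil}$ induced by $\zz \twoheadrightarrow C_2$, the composite is null-homotopic: by Lemma \ref{lem6.2}, $\eta^2 = 0$ already in $\pi_{*,*}(\ksc_2^{gr})$, and inspection of $\pi_{*,*}(\ksc_2^{gr})$ shows there are no nonzero classes in bidegree $(2,4+k)$ for any $k \geq 0$, so the convergent spectral sequence forces $\pi_{2,4}(\ksc_2^{fil}) = 0$.

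This furnishes a map $\cofib(\eta^2) \to \ksc_2^{fil}$ in filtered $\ko_2^{fil}$-modules. Since both sides are complete filtered objects, to verify it is an equivalence it suffices to check on associated gradeds. The associated-graded functor preserves cofiber sequences, so $\pi_{*,*}(\cofib(\eta^2)^{gr})$ is computed by the long exact sequence
$$\cdots \to \pi_{s-2,t-4}(\ko_2^{gr}) \xrightarrow{\eta^2} \pi_{s,t}(\ko_2^{gr}) \to \pi_{s,t}(\cofib(\eta^2)^{gr}) \to \pi_{s-1,t-4}(\ko_2^{gr}) \xrightarrow{\eta^2} \pi_{s+1,t}(\ko_2^{gr}) \to \cdots.$$
Plugging in $\pi_{*,*}(\ko_2^{gr}) = \zz_{(2)}[v_1^2,\eta]/(2\eta)$ from Lemma \ref{lem6.2}, the cokernel of $\eta^2$ contributes exactly the classes $v_1^{2i}$ and $v_1^{2i}\eta$ (with $\eta^2 = 0$ imposed), while the kernel of $\eta^2$, which is $2\zz_{(2)}[v_1^2]$, contributes the generator $\rho$ in bidegree $(1,4)$ together with its $v_1^2$-multiples. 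This recovers the description $\pi_{*,*}(\ksc_2^{gr}) = \zz_{(2)}[v_1^2,\eta,\rho]/(2\eta,\eta^2,\rho\eta,\rho^2)$ from Lemma \ref{lem6.2}, and an algebra-level comparison verifies the multiplicative relations $\rho^2 = 0$ and $\rho\eta = 0$ are forced by degree considerations.

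The main subtlety is confirming that the filtered (not merely underlying) multiplication $\eta^2$ truly composes to zero on $\ksc_2^{fil}$, which is what justifies invoking the universal property of the cofiber; this step is the reason we need the vanishing of $\pi_{2,4}(\ksc_2^{fil})$ and not just of $\pi_2(\ksc_2)$. Once this nullhomotopy is in hand, the identification of the cofiber with $\ksc_2^{fil}$ on the graded level is a direct computation from Lemma \ref{lem6.2}.
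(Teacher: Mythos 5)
Your proof is correct, but it takes a genuinely different route from the paper's and proves a strictly stronger statement than the corollary asks for. The paper works entirely at the level of associated gradeds: the relation $\eta^2 = 0$ in $\pi_{*,*}(\ksc_2^{gr})$ is immediate from the presentation in Lemma~\ref{lem6.2}, so a map $\cofib(\eta^2:\Sigma^2\ko_2^{gr}\to\ko_2^{gr}) \to \ksc_2^{gr}$ of $\ko_2^{gr}$-modules exists for free, and the paper checks it is an equivalence by reducing mod $2$ (where both sides become the tidy polynomial algebras $\fff_2[v_1,\eta]$ and $\fff_2[v_1,\eta]/\eta^2$, and a Nakayama-type argument finishes). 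You instead lift everything to the filtered level, build a cofiber sequence of $\ko_2^{fil}$-modules, and apply $(-)^{gr}$; this buys the (unclaimed but potentially useful) filtered cofiber sequence $\Sigma^{(2,4)}\ko_2^{fil}\xrightarrow{\eta^2}\ko_2^{fil}\to\ksc_2^{fil}$, at the cost of the extra work you flag. Notably, the ``main subtlety'' you identify --- needing $\pi_{2,4}(\ksc_2^{fil}) = 0$ rather than merely $\pi_2(\ksc_2) = 0$ --- is a subtlety only for the stronger filtered assertion; the corollary as stated lives on the graded objects, where the relevant vanishing is the trivial algebra relation $\eta^2 = 0$ and no spectral-sequence argument is required. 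Two smaller points to tighten: the class $y$ from the proof of Lemma~\ref{lem6.2} is a class in the associated graded, so the asserted lift to $\pi_{1,2}(\ko_2^{fil})$ (i.e., permanence in the filtration spectral sequence) deserves a one-line justification, for instance by noting that nothing can hit or be hit by $y$ for degree reasons; and the closing ``algebra-level comparison'' remark about $\rho^2 = 0$, $\rho\eta = 0$ is unnecessary, since the corollary is a statement about modules and the multiplicative structure was already pinned down in Lemma~\ref{lem6.2}.
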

\begin{proof}
Note first that $\eta^2$ vanishes under this map, so we get some map from the cofiber of $\eta^2$ to $\ksc_2^{gr}$.  The easiest way to see that this is an equivalence is to work modulo 2, where, by modifying the computations of Lemma \ref{lem6.2}, one would find that the algebras $\ko_2^{gr}/2$ and $\ksc_2^{gr}/2$ are given by $\fff_2[v_1,\eta]$ and $\fff_2[v_1,\eta]/(\eta^2)$, respectively, which makes the computation clear.\footnote{Both associated graded algebras were $\bb{E}_{\infty}-\zz$-algebras, so the cofiber of $2$ has a canonical $\bb{E}_{\infty}$-algebra structure on the associated gradeds, even though $\ko_2/2$ itself cannot support any algebra structure.}
\end{proof}
Returning now to the main goal of computing $\THH(\ksc_2)$, we arrive at the main theorem of this section
\begin{theorem}\label{th6.4}
	There is an isomorphism $$\THH_*(\ksc_2)\simeq \THH_*(\ko_2,\ksc_2)\otimes_{\zz_{(2)}}\Gamma[\sigma^2\eta^2],$$ with $\sigma^2\eta^2$ a class in degree 4.
\end{theorem}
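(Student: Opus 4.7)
The plan is to adapt the strategy of Theorem 4.3 to the case at hand, with the equivariant May-style filtration of Definition 6.1 playing the role of the Whitehead filtration used there.

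First, I apply Corollary 5.3 with $R=\ko_2$, $S=\ksc_2$, $x=\eta\in\pi_1(\ko_2)$, and $k=2$. The hypotheses are satisfied: $\ko_2$ is a connective $\bb{E}_\infty$-ring, $\ksc_2$ is an $\bb{E}_\infty$-$\ko_2$-algebra, and we have the fiber sequence $\Sigma^2\ko_2\xrightarrow{\eta^2}\ko_2\to\ksc_2$ noted at the start of this section. Since $k|x|+2=4$, this identifies the $E_1$-page of the Whitehead spectral sequence for $\THH(\ksc_2)$ with the $E_1$-page of the Atiyah--Hirzebruch spectral sequence for $\THH(\ko_2,\ksc_2)$, tensored over $\zz_{(2)}$ with $\bigoplus_{r\geq 0}\zz_{(2)}\cdot a_r$, where $a_r$ has bidegree $(-4r,0)$.

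Next, I produce a candidate class in $\THH_4(\ksc_2)$ whose divided powers will realize the $a_r$. Following the construction discussed in the remark after Theorem 4.3 applied to the category of $\ko_2$-modules, the cofiber of the unit $\ko_2\to\ksc_2$ is $\Sigma^3\ko_2$, and $\eta^2\in\pi_2(\ko_2)$ has a canonical lift to $\pi_3(\Sigma^3\ko_2)$. The resulting class in $\pi_4(\THH_{\ko_2\text{-}\mathrm{Mod}}(\ksc_2))$ pushes forward to a class $\sigma^2\eta^2\in\THH_4(\ksc_2)$, and together with the action of $\THH_*(\ko_2,\ksc_2)$ this yields a graded-ring map
$$\Phi\colon\THH_*(\ko_2,\ksc_2)\otimes_{\zz_{(2)}}\Gamma[\sigma^2\eta^2]\to\THH_*(\ksc_2).$$

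The core of the argument is to verify that $\Phi$ is an isomorphism. I would do this by showing that $\sigma^2\eta^2$ and each $\gamma_r(\sigma^2\eta^2)$ are permanent cycles in the May-type spectral sequence associated to the filtered $\bb{E}_\infty$-ring $\ksc_2^{fil}$ of Definition 6.1, whose associated graded is $\ksc_2^{gr}$ as described in Lemma 6.2 and Corollary 6.3. On this $E_1$-page, $\sigma^2\eta^2$ sits in a well-understood bidegree coming from the cofiber sequence of Corollary 6.3, and a sparsity argument in the spirit of Theorem 4.3, using the explicit structure of $\pi_{*,*}(\ksc_2^{gr})$, will force all possible targets of differentials out of $\gamma_r(\sigma^2\eta^2)$ to vanish and rule out hidden multiplicative extensions between the image of $\Phi$ and any other classes.

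Once the $a_r=\gamma_r(\sigma^2\eta^2)$ are known to be permanent cycles in the Whitehead spectral sequence, the comparison map from the Atiyah--Hirzebruch spectral sequence for $\THH(\ko_2,\ksc_2)$ together with the Leibniz rule determines every remaining differential, and filtration-degree considerations as in the end of the proof of Theorem 5.1 rule out hidden extensions between $\THH_*(\ko_2,\ksc_2)$ and the $\gamma_r(\sigma^2\eta^2)$ factor, making $\Phi$ an isomorphism. The main obstacle I expect is the sparsity analysis in the May-type spectral sequence: the presence of the class $\rho$ in $\pi_{*,*}(\ksc_2^{gr})$, absent in the $\ell/v_1^n$ setting of Section 4, requires more careful bidegree bookkeeping than in the proof of Theorem 4.3.
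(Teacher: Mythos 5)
Your high-level architecture matches the paper's: set up the Whitehead spectral sequence via Corollary \ref{cor5.3}, build the map $\Phi$, and force the divided power classes to be permanent cycles by passing through the equivariant May-type spectral sequence of Definition \ref{def6.1}. The place where your plan has a genuine gap is the claim that a ``sparsity argument in the spirit of Theorem \ref{th4.3}, using the explicit structure of $\pi_{*,*}(\ksc_2^{gr})$'' will settle the differentials on the $\gamma_r(\sigma^2\eta^2)$ in the May-type spectral sequence. The $E_1$-page of that spectral sequence is $\THH_*(\ksc_2^{gr})$, not $\pi_{*,*}(\ksc_2^{gr})$; Lemma \ref{lem6.2} only hands you the latter. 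In the Section 4 case the analogue of $\THH_*(\ksc_2^{gr})$ was classical Hochschild homology of a truncated polynomial ring over $\zz_{(p)}$, which the paper computed directly via Lemma \ref{lem4.4}. Here, $\ksc_2^{gr}$ is a more complicated graded $\bb{E}_\infty$-$\zz$-algebra with $\eta$ and $\rho$, and computing $\THH_*(\ksc_2^{gr})$ is precisely the content of the paper's Proposition \ref{prop6.5}, which is the technical heart of the section. That proposition proceeds by running Corollary \ref{cor5.3} again one level down (for $\ko_2^{gr}\to\ksc_2^{gr}$, using Corollary \ref{cor6.3}), and then, to show the $\Gamma[\sigma^2\eta^2]$ factor persists there, it needs a Bockstein spectral sequence for $\THH(\ksc_2^{gr}\otimes_\zz\zz_{(2)}^{bok})$ together with the nontrivial $\tilde{v}_0$-torsion Lemma \ref{lem6.6}. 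None of this is anticipated in your proposal; ``more careful bidegree bookkeeping'' significantly undersells what is required, since the needed input is a separate spectral sequence argument, not a refinement of the one you describe.

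Once Proposition \ref{prop6.5} is in place, the actual sparsity step in the paper's proof of Theorem \ref{th6.4} is short: the only classes in $\THH_*(\ko_2^{gr},\ksc_2^{gr})$ with positive $s$-degree are $\rho$ and $\eta$, so $\Gamma[\sigma^2\eta^2]$ consists of permanent cycles in the May-type spectral sequence. You would still then need the comparison map $\THH_*(\ksc_2^{gr})\to\THH_*(\ksc_2^{gr},\zz_{(2)})$ to identify $\sigma^2\eta^2$ with the class $x$ of the Whitehead spectral sequence and transfer permanence to the $x^{(k)}$; your proposal does not mention this bridge, which is needed to close the argument back in the Whitehead spectral sequence where Corollary \ref{cor5.3} lives.
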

By corollary \ref{cor5.3}, the Whitehead spectral sequence computing $\THH_*(\ksc_2)$ has signature which can be identified with
$$E_1^{s,t}=\THH_{-s}(\ko_2,\pi_{t}(\ksc_2))\otimes_{\zz_{(2)}}\Gamma[x]\implies \THH_{t-s}(\ksc_2),$$ with the divided power class $x$ in bidegree $(-4,0)$.  Using similar methods as in \textsection 2, one can recover the fact from \cite{angeltveit2009topological} that $\THH_*(\ko_2,\zz_{(2)})$ is $\zz_{(2)}$ in degrees 0 and 5; $\zz/2^k\zz$ in degrees $r2^{k+2}-1$ and $r2^{k+2}-1+5$ for $r>0$ odd; and is zero otherwise.  Furthermore, $$\THH_*(\ko_2,\fff_2)\simeq \fff_2[u^4]\otimes\Lambda[u\xi_1^3,u^2\xi_2].$$\begin{center}
	\begin{sseqpage}[title={$E_1$-page of Whitehead spectral sequence in low degrees}, axes type = center,  classes = {draw = none },
		x range = {-8}{1}, y range = {0}{4}, x axis origin = {1}, y tick gap = {-0.5 cm}, x label = { s }, y label = { t }, x label style={ yshift = -15pt }, y label style={rotate=270, yshift=480pt, xshift=23pt}, class placement transform={scale=1}, xscale=1.7, yscale=1
		]
		\class["\zz_{(2)}"](0,0)
		\class["\zz_{(2)}x"](-4,0)
		\class["\zz_{(2)}"](-5,0)
		\class["\zz/2\zz"](-7,0)
		\class[" "](-8,0)
		\class["\zz_{(2)}x^{(2)}"](-8,0)
		\class["\zz_{(2)}"](0,1)
		\class["\zz_{(2)}"](-4,1)
		\class["\zz_{(2)}"](-5,1)
		\class["\zz/2\zz"](-7,1)
		\class[" "](-8,1)
		\class["\zz_{(2)}"](-8,1)
		\class["\zz_{(2)}"](0,4)
		\class["\zz_{(2)}"](-4,4)
		\class["\zz_{(2)}"](-5,4)
		\class["\zz/2\zz"](-7,4)
		\class[" "](-8,4)
		\class["\zz_{(2)}"](-8,4)
		\class["\zz/2\zz"](0,3)
		\class["\zz/2\zz"](-4,3)
		\class["\zz/2\zz"](-5,3)
		\class["\zz/2\zz"](-7,3)
		\class[" "](-8,3)
		\class["\zz/2\zz^{\oplus 2}"](-8,3)
	\end{sseqpage}
\end{center}  In particular, when we run the Whitehead spectral sequence for $\THH(\ksc_2)$, the class $\sigma^2\eta^2$ in bidegree $(-4,0)$ cannot hit anything for degree reasons (and since $\ksc_2\to \THH(\ksc_2)$ must split as $\ksc_2$ is an $\bb{E}_{\infty}$-ring), and is thus a permanent cycle, so too then are all powers of $\sigma^2\eta^2$.  The claim will reduce to showing that $x$ and all of its divided powers are permanent cycles.  To aid in this endeavor, we investigate the equivariant May-type spectral sequence arising from $\THH(\ksc_2^{fil})$, with signature
$$E_1^{s,t}=\pi_{*,*}\THH(\ksc_2^{gr})\implies \THH_*(\ksc_2).$$
\begin{proposition}\label{prop6.5}
There is an isomorphism of graded algebras $$\THH_*(\ksc_2^{gr})\simeq \THH_*(\ko_2^{gr},\ksc_2^{gr})\otimes_{\zz_{(2)}} \Gamma[\sigma^2\eta^2],$$ with $\sigma^2\eta^2$ a class in degree 4.
\end{proposition}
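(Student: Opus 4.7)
The plan is to apply Theorem 5.1 internally to the category of graded spectra, where $\ko_2^{gr}$ and $\ksc_2^{gr}$ live as graded $\bb{E}_\infty$-algebras. Taking $R = \ko_2^{gr}$, $x = \eta$ (of topological degree $1$), and $k = 2$, the fiber sequence $\Sigma^2 \ko_2^{gr} \xrightarrow{\eta^2} \ko_2^{gr} \to \ksc_2^{gr}$ furnished by Corollary \ref{cor6.3} places us squarely in the setting of Theorem \ref{th5.1}, with $S = \ksc_2^{gr}$ as the required $\bb{E}_\infty$-$\ko_2^{gr}$-algebra.

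First, combining Theorem \ref{th5.1} with Corollary \ref{cor5.3} identifies the $E_1$-page of the Whitehead spectral sequence computing $\THH_*(\ksc_2^{gr})$ with
\begin{equation*}
E_1^{*,*} = \THH_*(\ko_2^{gr},\pi_*(\ksc_2^{gr})) \otimes_{\zz_{(2)}} \Gamma_{\zz_{(2)}}[x],
\end{equation*}
where $x$ has topological degree $k|\eta|+2 = 4$. To identify the generator with $\sigma^2\eta^2$, I would tensor the fiber sequence of Corollary \ref{cor6.3} with $\zz_{(2)}$ over $\ko_2^{gr}$: because $\eta^2$ acts trivially on $\zz_{(2)}$, this splits as $\zz_{(2)} \oplus \Sigma^3 \zz_{(2)}$, so that $\zz_{(2)} \otimes_{\zz_{(2)}\otimes_{\ko_2^{gr}} \ksc_2^{gr}} \zz_{(2)}$ is a trivial square-zero $\bb{E}_\infty$-extension by the argument appearing before Lemma \ref{lem4.2} (invoking \cite{Dundas_2018}), with homotopy $\Gamma_{\zz_{(2)}}[\sigma^2\eta^2]$. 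The identification of $x$ as $\sigma^2\eta^2$ then follows from the explicit construction of $\sigma^2$ recalled in the remark after Theorem \ref{th4.3}, since $\eta^2$ admits the required lift to $\pi_*(\mo{cofib}(\ko_2^{gr} \to \ksc_2^{gr}))$.

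Second, I would show that the divided powers $(\sigma^2\eta^2)^{(r)}$ are permanent cycles in this Whitehead spectral sequence and that there are no nontrivial multiplicative extensions between the two tensor factors. The natural approach, paralleling the proof of Theorem \ref{th4.3}, is to run a May-type spectral sequence (Definition \ref{definition2.9}) for $\ksc_2^{gr}$, internally to graded spectra, and to establish a graded analog of Lemma \ref{lem4.4} that splits the $\THH$ of the $\eta$-power truncation as a classical Hochschild factor. This would identify each $(\sigma^2\eta^2)^{(r)}$ as the class of maximal filtration within its bidegree, forcing it to survive to $E_\infty$ and preventing any multiplicative extension on these classes.

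The main obstacle will be producing the right graded analog of Lemma \ref{lem4.4}: one needs a suitable graded $\bb{E}_2$-algebra map from a polynomial-like ring onto (a truncation of) $\pi_{*,*}(\ko_2^{gr})$, analogous to $\sss[x]/x^n \to k[x]/x^n$, so that $\THH$ splits and the divided power classes descend to permanent cycles. Once this technical ingredient is in place and the degree count on the associated graded is carried out, the rest of the proof becomes a direct transcription of Theorem \ref{th4.3} to the graded setting.
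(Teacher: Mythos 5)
Your first step --- combining Corollary \ref{cor5.3} with Corollary \ref{cor6.3} to identify the $E_1$-page of the Whitehead spectral sequence for $\THH_*(\ksc_2^{gr})$ as $\THH_*(\ko_2^{gr},\pi_*(\ksc_2^{gr}))\otimes_{\zz_{(2)}}\Gamma[\sigma^2\eta^2]$ --- is exactly what the paper does, and your identification of the generator with $\sigma^2\eta^2$ via the splitting of $\zz_{(2)}\otimes_{\ko_2^{gr}}\ksc_2^{gr}$ is sound.

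The second step, however, is where you diverge and where the gap sits. You propose to run a May-type spectral sequence for $\ksc_2^{gr}$ internally to graded spectra and to split its associated graded via a ``graded analog of Lemma \ref{lem4.4}.'' You flag this as the main obstacle, and it genuinely is one: Lemma \ref{lem4.4} requires $\pi_*(R)\simeq k[x]/x^n$ with $x$ in positive \emph{even} degree, admitting an $\bb{E}_2$-algebra map from something with $\pi_*=k[x]$. None of these hypotheses is available here. We have $\pi_{*,*}(\ksc_2^{gr})\simeq\zz_{(2)}[v_1^2,\eta,\rho]/(2\eta,\eta^2,\rho\eta,\rho^2)$: the truncated class $\eta$ is degree $1$ (odd), it is $2$-torsion so it is not polynomial over the base, and there is an additional exterior class $\rho$ tangled up in the relations. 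So the Lemma \ref{lem4.4} machinery does not transport, and your proposal leaves the permanent-cycle claim --- the actual content of the proposition --- unproved.

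The paper resolves this by a different mechanism entirely. Rather than a May-type filtration, it takes the \emph{Bockstein} spectral sequence attached to $\THH(\ksc_2^{gr}\otimes_{\zz}\zz_{(2)}^{bok})$, whose $E_1$-page is $\THH_*(\fff_2[\eta,v_1,\tilde{v}_0]/\eta^2)$. Passing to $\fff_2$ first is the key move: $\fff_2[\eta]/\eta^2$ is a trivial square-zero extension of $\fff_2$, so monoidality of $\THH$ gives an explicit multiplicative description of the $E_1$-page including the divided power algebra $\Gamma[\sigma^2\eta^2]$ as a tensor factor --- without ever needing a $\zz_{(2)}$-level splitting of $\pi_{*,*}(\ksc_2^{gr})$. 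Then a technical lemma (Lemma \ref{lem6.6}) bounds the $t$-degree of any $\tilde{v}_0$-torsion class on $E_k$, and a degree count shows that the $\tilde{v}_0$-tower above $(\sigma^2\eta^2)^{(2^n)}$ cannot die, forcing these classes to survive. If you want to complete a proof along your lines, you would need either to work modulo $2$ as the paper does, or to produce an entirely new splitting result that tolerates odd-degree, torsion truncated classes --- a nontrivial extension of Lemma \ref{lem4.4} that the paper does not pursue.
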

\begin{proof}
As a consequence of corollary \ref{cor5.3} and corollary \ref{cor6.3}, the Whitehead spectral sequence computing $\THH_*(\ksc_2^{gr})$ has signature $$E_1=\THH_*(\ko_2^{gr},\pi_*(\ksc_2^{gr}))\otimes_{\zz_{(2)}}\Gamma[\sigma^2\eta^2]\implies \THH_*(\ksc_2^{gr}).$$  It suffices to show that the classes in $\Gamma[\sigma^2\eta^2]$ are permanent cycles, since then there will be an algebra map $\THH_*(\ko_2^{gr},\ksc_2^{gr})\otimes_{\zz_{(2)}}\Gamma[\sigma^2\eta^2]\to \THH_*(\ksc_2^{gr})$ inducing the desired isomorphism.

To see this, we will work with the Bockstein spectral sequence associated to the filtered $\bb{E}_{\infty}$-$\zz$-algebra $\THH(\ksc_2^{gr}\otimes_{\zz}\zz_{(2)}^{bok})$.  This spectral sequence has signature $$E_1^{s,t}=\THH_*(\fff_2[\eta,v_1,\tilde{v_0}]/(\eta^2))\implies \THH_*(\ksc_2^{gr})_2^{\wedge}.$$  By monoidality of $\THH$, we have $$\THH(\fff_2[\eta,v_1,\tilde{v_0}]/\eta^2)\simeq \THH(\fff_2[\eta]/\eta^2)\otimes_{\THH(\fff_2)}\THH(\fff_2[v_1])\otimes_{\THH(\fff_2)}\THH(\fff_2[\tilde{v_0}]).$$  Using that $\fff_2[\eta]/\eta^2$ is a square zero extension of $\fff_2$ in $\bb{E}_{\infty}$-$\fff_2$-algebras, we find that $$\THH_*(\fff_2[\eta]/\eta^2)=\THH_*(\fff_2)\otimes_{\fff_2}\fff_2[\eta]/\eta^2\otimes\Lambda[\sigma\eta]\otimes_{\fff_2} \Gamma[\sigma^2\eta^2].$$  From this, it follows that $$\THH_*(\fff_2[\eta,v_1,\tilde{v_0}]/\eta^2)=\THH_*(\fff_2)\otimes_{\fff_2}\fff_2[\eta,v_1,\tilde{v_0}]/\eta^2\otimes_{\fff_2}\Lambda[\sigma\eta,\sigma v_1,\sigma \tilde{v_0}]\otimes_{\fff_2} \Gamma[\sigma^2 \eta^2].$$
In the 2-Bockstein spectral sequence, the multiplicative generators have bidegrees $|\tilde{v_0}|=(1,1)$, $|\sigma\tilde{v_0}|=(0,1)$, $|\sigma\eta|=(-2,0)$, $|\sigma v_1|=(-3,0)$, $|\eta|=(-1,0)$, $|v_1|=(-2,0)$,  $\sigma^2\eta^2=(-4,0)$, and the class $u$ in bidegree $(-2,0)$ which generates $\THH_*(\fff_p)$ as a polynomial algebra.  We begin by examining the class $\sigma^2\eta^2$ in degree $(-4,0)$.  Almost all of the classes in the $\tilde{v_0}$-tower on the class $\sigma^2\eta^2$ must survive the spectral sequence in order to give the $\zz_{(2)}\cdot \sigma^2\eta^2$ class in degree 4 of $\THH_*(\ksc_2^{gr})$.  One finds that $\sigma^2\eta^2$ cannot support any differentials, since any nontrivial differential on this class would kill the entire tower.  We make use of the following lemma.

\begin{lemma}\label{lem6.6}
If $a$ is a class on the $E_k$-page of the Bockstein spectral sequence computing $\THH(\ksc_2^{gr})_2^{\wedge}$, with $a\neq 0$, but $\tilde{v_0}a=0$, then $\deg^{t}(a)\leq k-1$, where $\deg^t(-)$ denotes the integer $n$ such that $a\in E_k^{s,n}$ for some $s$.
\end{lemma}
\begin{proof}  For $k=1$, this is vacuous, since there is no $\tilde{v_0}$-torsion on the $E_1$-page of this spectral sequence.  We proceed by induction.  Suppose that $a\neq 0$ is a class on the $E_k$-page with $\tilde{v_0}a=0$, and $\deg^{t}(a)\geq k$.  The fact that $\tilde{v_0}a=0$ means that at some point earlier in the spectral sequence, say on the $E_{k-i}$-page ($i>0$), we had a class $b$ with $d_{k-i}(b)=\tilde{v_0}a$.  The class $b$ then necessarily has $t$-degree $i+1>1$.  In particular, $\tilde{v_0}$ must divide $b$ for degree reasons, so $b=c\tilde{v_0}$, for some class $c$ (or more accurately, comes from a class on $E_1$ divisible by $\tilde{v_0}$, and by our inductive hypothesis, there cannot be a differential taking $c$ to a nonzero class which multiplies with $\tilde{v_0}$ to zero).  Now, $d(c)\neq a$, but $d(\tilde{v_0}c)=d(b)=\tilde{v_0}a$, so that $a-d(c)\neq 0$, but $\tilde{v_0}(a-d(c))=0$.  Since $a-d(c)$ is a $\tilde{v_0}$-torsion class on $E_{k-i}$ with $t$-degree $k\geq k-i$, it must be 0 by induction, so that $a=d(c)$, contradicting the choice of $a$.
\end{proof}
This lemma implies that if the differential of any class in $t$-degree $0$ or $1$ is nontrivial, then the entire $\tilde{v_0}$-tower on that class dies.  Let $n$ be the smallest natural number such that $(\sigma^2\eta^2)^{(2^n)}$ does not live in $\THH_*(\ksc_2^{gr})$.  In particular, we must have that the entire $\tilde{v_0}$-tower on the analogous class in the mod 2 Bockstein must vanish.  Since $(\sigma^2\eta^2)^{(2^{n-1})}$ squares to a torsion-free class, there must be a nonvanishing $\tilde{v_0}$ tower in total degree $2\cdot 2^{n-1}\cdot 4=2^{n+2}$ in the Bockstein spectral sequence.  Considering the map from the spectral sequence associated to $\THH(\zz_{(2)}^{fil})$ shows that the classes divisible by $u$ and $\sigma\tilde{v_0}$ are all $\tilde{v_0}$-torsion.  In order for $\eta$ to be $2$-torsion, we need a differential to hit $\eta\tilde{v_0}$, and this can be checked to come from $\sigma\eta$.  Thus, the only classes that can contribute to a nonvanishing $\tilde{v_0}$-tower are multiples of the classes $(\sigma^2\eta^2)^{(2^k)}$, for $k<n$, powers of $v_1$, and $\sigma v_1$.  Since no power of $v_1$ divides any element of $\zz_{(2)}[\sigma^2\eta^2]$, the only contribution can come from $\sigma v_1$ and the $(\sigma^2\eta^2)^{(2^k)}$.  The total degree of $\sigma v_1\cdot \prod_{k<n}(\sigma^2\eta^2)^{(2^k)}$ is $2^{n+1}$, but the tower we need is in total degree $2^{n+2}$, and thus must come from $(\sigma^2\eta^2)^{(2^n)}$! This shows that all of our $(\sigma^2\eta^2)^{(2^n)}$ classes have to survive this Bockstein spectral sequence, proving that they survive to give the divided power classes in $\THH_*(\ksc_2^{gr})$ that we were looking for.
\end{proof}

\begin{proof}[Proof of Theorem \ref{th6.4}]
By proposition \ref{prop6.5}, the equivariant May-type spectral sequence has $E_1$-page $$\THH_*(\ko_2^{gr},\ksc_2^{gr})\otimes_{\zz_{(2)}}\Gamma[\sigma^2\eta^2].$$  The class $\sigma^2\eta^2$ sits in $(s,t)$-degree $(0,4)$, and the only classes in $\THH_*(\ko_2^{gr},\ksc_2^{gr})$ with positive $s$-degree are the classes $\rho$ and $\eta$, in degrees $(1,4)$ and $(1,2)$, respectively.  Thus, the classes in $\Gamma[\sigma^2\eta^2]$ are permanent cycles.  By comparing with the map $\THH_*(\ksc_2^{gr})\to\THH_*(\ksc_2^{gr},\zz_{(2)})$, we find that $\sigma^2\eta^2$ reduces to the same class originally termed $x$ in the Whitehead spectral sequence.  In particular, this analysis shows that the classes $\Gamma[x]$ in the Whitehead spectral sequence for $\THH(\ksc_2)$ must be permanent cycles as well, so that the $E_{\infty}$-page of the Whithead spectral sequence is the tensor product of the $E_{\infty}$-page of the Atiyah-Hirzebruch spectral sequence computing $\THH(\ko_2,\ksc_2)$ with the divided power algebra $\Gamma[x]$.  We find that there is an induced map of graded-commutative $\zz_{(2)}$-algebras $$\THH_*(\ko_2,\ksc_2)\otimes_{\zz_{(2)}}\Gamma[\sigma^2\eta^2]\to\THH_*(\ksc_2),$$ which, from the computation of the Whitehead spectral sequence for $\THH(\ksc_2)$, must be an isomorphism.
\end{proof}

\begin{remark}\label{rem6.7}
	$\THH_*(\ko_2,\ksc_2)$ can be computed as a graded abelian group from the work of \cite{angeltveit2009topological}, noting that they prove that $\eta^2$ acts as zero on $\overline{\THH}_*(\ko_2)$, which determines $$\overline{\THH}(\ko_2,\ksc_2)\simeq\cofib(\eta^2:\Sigma^2\overline{\THH}(\ko_2)\to\overline{\THH}(\ko_2))$$ up to extension problems.  Since the only classes in $\overline{\THH}_*(\ko_2)$ in odd degrees are copies of $\zz_{(2)}$ living in what \cite{angeltveit2009topological} call $F^{\ko}$, there can only be nontrivial extension problems if the map from $$\THH_{5+4n}(\ko_2)\to\THH_{5+4n}(\ko_2,\ksc_2)$$ is not surjective on the torsion-free parts.  However, we know from the computations in \cite[~\textsection 7.2-7.3]{angeltveit2009topological}  that $$\THH_{5+4n}(\ko_2)\to\THH_{5+4n}(\ko_2,\mo{ku}_2)$$ induces an isomorphism on the torsion-free part, and this map factors as $$\THH_{5+4n}(\ko_2)\to\THH_{5+4n}(\ko_2,\ksc_2)\to\THH_{5+4n}(\ko_2,\mo{ku}_2).$$  Thus, we find that, as a graded abelian group, $$\THH_*(\ko_2,\ksc_2)\simeq \THH_*(\ko_2)\oplus \THH_{*-3}(\ko_2).$$  Combined with the above, we have completely determined $\THH_*(\ksc_2)$.
\end{remark}

\newpage
\printbibliography

\end{document}